\author{Luigi Ambrosio\thanks{Supported by the ERC ADG Grant GeMeTneES} \\ \small
Scuola Normale Superiore, Pisa \\
l.ambrosio@sns.it
\and
Guido De Philippis \\ \small Scuola Normale
Superiore, Pisa \\
guido.dephilippis@sns.it \and
Luca Martinazzi\thanks{Supported by the Swiss National Fond Grant no. PBEZP2-129520.}\\
\small Centro De Giorgi, Pisa\\ luca.martinazzi@sns.it}
\date{July 21, 2010}
\title{Gamma-convergence of nonlocal perimeter functionals}
\newtheorem{trm}{Theorem}
\newtheorem{prop}[trm]{Proposition}
\newtheorem{cor}[trm]{Corollary}
\newtheorem{lemma}[trm]{Lemma}
\newcommand{\R}[1]{\mathbb{R}^{#1}}
\newcommand{\de}{\partial}
\newcommand{\ve}{\varepsilon}
\newcommand{\M}[1]{\mathcal{#1}}
\newenvironment{proof}{\noindent\emph{Proof.}}{\hfill$\square$\medskip}
\DeclareMathOperator{\loc}{loc}
\DeclareMathOperator*{\dist}{dist}
\DeclareMathOperator*{\Intm}{\int\!\!\!\!\!\! \rule[2.6pt]{6.5pt}{.4pt}}
\begin{document}
\maketitle


\section{Introduction}

For a measurable set $E\subset\R{n}$, $n\ge 1$, $0<s<1$, and a
connected open set $\Omega\Subset\R{n}$ with Lipschitz boundary (or
simply $\Omega=(a,b)\Subset\R{}$ if $n=1$), we consider the
functional
$$\M{J}_s(E, \Omega):=\M{J}^1_s(E,\Omega)+\M{J}^2_s(E, \Omega),$$
where
\begin{equation*}
\begin{split}
\M{J}^1_s(E, \Omega)&:=\int_{E\cap \Omega}\int_{E^c\cap \Omega}\frac{1}{|x-y|^{n+s}}dxdy,\\
\M{J}^2_s(E, \Omega)&:=\int_{E\cap \Omega}\int_{E^c\cap
\Omega^c}\frac{1}{|x-y|^{n+s}}dxdy+ \int_{E\cap
\Omega^c}\int_{E^c\cap \Omega}\frac{1}{|x-y|^{n+s}}dxdy.
\end{split}
\end{equation*}

The functional $\M{J}_s(E,\Omega)$ can be thought of as a fractional
perimeter of $E$ in $\Omega$ which is non-local in the sense that it
is not determined by the behaviour of $E$ in a neighbourhood of $\de
E\cap \Omega$, and which can be finite even if the Hausdorff
dimension of $\de E$ is $n-s>n-1$. Notice that the term
$\M{J}^1_s(E,\Omega)$ is simply half of the fractional Sobolev space
seminorm $|\chi_E|_{W^{s,1}(\Omega)}$, where $\chi_E$ denotes the
characteristic function of $E$. Roughly speaking this term
represents the $(n-s)$-dimensional fractional perimeter of $E$
\emph{inside} $\Omega$, while $\M{J}^2_s$ is the contribution near
$\de\Omega$. This can be made precise when letting $s\uparrow 1$. We
also recall the following elementary scaling property:
\begin{equation}\label{scale}
\M{J}^i_s(\lambda
E,\lambda\Omega)=\lambda^{n-s}\M{J}^i_s(E,\Omega)\qquad \text{for
}\lambda>0, \; i=1,2.
\end{equation}
This functional has already been investigated by several authors. In
\cite{V} Visintin studied some basic properties of $\M{J}_s$, and in
particular he showed that $\M{J}_s$ satisfies a suitable co-area formula, see
Lemma \ref{coarea} below. Caffarelli, Roquejoffre and Savin
\cite{CRS} studied the behavior of minimizers of $\M{J}_s$, proving
that if $E$ is a local minimizer of $\M{J}_s(\cdot,\Omega)$, i.e.
$$\M{J}_s(E,\Omega)\le \M{J}_s(F,\Omega)\quad \text{whenever
}E\Delta F\Subset\Omega,$$ then $(\de E)\cap \Omega$ is of class
$C^{1,\alpha}$ up to a set of Hausdorff codimension in $\R{n}$ at
least $2$.

As it is well-known (see for instance \cite{giu} and the references
therein), for minimizers $E$ of the classical De Giorgi's perimeter,
which we shall denote $P(E,\Omega)$, the regularity results are
stronger. The boundary of a local minimizer $E$ of $P(\cdot,\Omega)$
is analytic if $n\le 7$, it has (at most) isolated singularities
when $n=8$ and it is analytic up to a set of codimension at least
$8$ in $\R{n}$ if $n\ge 9$. This suggests that the results of
\cite{CRS} might not be optimal for $s$ close to 1. Motivated by
this, Caffarelli and Valdinoci \cite{CV} studied the limiting
properties of minimal sets for the $s$-perimeter as $s\to 1^-$.


Partly motivated by their work, we make a complete analysis of the
limiting properties, in the sense of $\Gamma$-convergence, of
$\M{J}_s$ as $s\to 1^-$, under no other assumption than the
measurability of the sets considered. Our proofs differ in
particular from those in \cite{CV} because they do not rely on
uniform (as $s\to 1^-$) regularity estimates on $s$-minimal
boundaries borrowed from \cite{CRS}. The only result we need from
\cite{CRS}, in the proof of our Lemma~\ref{star}, is the local
minimality of halfspaces, whose proof is reproduced in the appendix.

We start by proving a coercivity result.

\begin{trm}[Equi-coercivity]\label{trm1}
Assume that $s_i\uparrow 1$ and that $E_i$ are measurable sets
satisfying
$$\sup_{i\in\mathbb{N}}(1-s_i)\M{J}^1_{s_i}(E_i,\Omega')<\infty\qquad\forall
\Omega'\Subset\Omega.$$ Then $(E_i)$ is relatively compact in
$L^1_{\loc}(\Omega)$, any limit point $E$ has locally finite
perimeter in $\Omega$.
\end{trm}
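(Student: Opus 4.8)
The plan is to recognize $\M{J}^1_s$ as (half of) a Gagliardo seminorm and then apply the Bourgain--Brezis--Mironescu asymptotics. First I would note that
$$2\M{J}^1_s(E,\Omega')=|\chi_E|_{W^{s,1}(\Omega')}=\int_{\Omega'}\int_{\Omega'}\frac{|\chi_E(x)-\chi_E(y)|}{|x-y|^{n+s}}\,dx\,dy ,$$
so the hypothesis reads $\sup_i(1-s_i)|\chi_{E_i}|_{W^{s_i,1}(\Omega')}<\infty$ for every $\Omega'\Subset\Omega$; enlarging $\Omega'$ slightly, the same bound holds on every $\Omega''$ with $\Omega'\Subset\Omega''\Subset\Omega$. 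This is exactly the setting of the $s\uparrow1$ theory of fractional seminorms.

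For the compactness statement I would argue locally. Fixing an open ball $B$ with $\overline B\subset\Omega$, the functions $\chi_{E_i}$ are bounded in $L^1(B)$ (by $|B|$) and satisfy $\sup_i(1-s_i)|\chi_{E_i}|_{W^{s_i,1}(B)}<\infty$, so the Bourgain--Brezis--Mironescu compactness criterion for such scaled seminorms (see also Ponce) yields relative compactness of $(\chi_{E_i})$ in $L^1(B)$. Exhausting $\Omega$ by countably many such balls and running the usual diagonal argument produces, out of any subsequence, a further subsequence $(\chi_{E_{i_k}})$ converging in $L^1_{\loc}(\Omega)$ to some $f$; passing to an a.e.-convergent subsequence and using $\chi_{E_{i_k}}\in\{0,1\}$ forces $f=\chi_E$ a.e.\ for a measurable set $E$. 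Hence $(E_i)$ is relatively compact in $L^1_{\loc}(\Omega)$ and every limit point is (the characteristic function of) a measurable set.

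To show that such a limit set $E$ has locally finite perimeter, I would fix $\Omega'\Subset\Omega$, pick a ball $B$ with $\Omega'\Subset B\Subset\Omega$, and invoke the $\Gamma$-$\liminf$ half of the Bourgain--Brezis--Mironescu / D\'avila / Ponce theorem on $B$: there is a dimensional constant $K(n)>0$ with
$$K(n)\,P(E,\Omega')\le K(n)\,P(E,B)\le\liminf_{k\to\infty}(1-s_{i_k})|\chi_{E_{i_k}}|_{W^{s_{i_k},1}(B)}=2\liminf_{k\to\infty}(1-s_{i_k})\M{J}^1_{s_{i_k}}(E_{i_k},B),$$
and the right-hand side is finite by the hypothesis applied with $\Omega'=B$. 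Since $\Omega'\Subset\Omega$ was arbitrary, $E$ has locally finite perimeter in $\Omega$, which completes the proof.

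The main obstacle is this last ingredient: the lower bound asserting that a uniform bound on $(1-s)|\chi_E|_{W^{s,1}}$ forces, in the limit $s\uparrow1$, finiteness of the De Giorgi perimeter. The naive route---bounding a single difference quotient $\int_{\Omega'}|\chi_E(x+h)-\chi_E(x)|\,dx$ by a power of $|h|$ times $(1-s)|\chi_E|_{W^{s,1}(\Omega'')}$---does not come with a constant that stays bounded as $s\uparrow1$; the correct estimate must compare the difference quotients with their average over a ball of radius comparable to $|h|$ (equivalently, with a mollification at scale $|h|$), carefully retaining the factor $(1-s)$ produced by integrating the singular kernel $|x-y|^{-n-s}$ over such a ball. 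I would either quote this estimate from the cited works or reproduce its averaging argument; the compactness statement of the second paragraph rests on the same circle of ideas.
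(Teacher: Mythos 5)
Your route is the same circle of ideas as the paper's, and in your final paragraph you correctly diagnose the technical crux: a naive pointwise difference-quotient bound does not come with a constant bounded as $s\uparrow1$, and one must average over translations $\xi$ with $|\xi|\lesssim|h|$ to extract the factor $(1-s)$ produced by integrating the singular kernel. The paper does not cite BBM/D\'avila/Ponce as black boxes but proves precisely this estimate: Proposition~\ref{pmazya} (taken from Maz'ya--Shaposhnikova \cite{Ma}) gives
$$\frac{\|\tau_h u-u\|_{L^1(A)}}{|h|^{s}}\le C(n)(1-s)\int_{B_{|h|}}\frac{\|\tau_\xi u-u\|_{L^1(A_{|h|})}}{|\xi|^{n+s}}\,d\xi,$$
established via the mollification device of Proposition~\ref{prop1} followed by Hardy's inequality. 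From this, \eqref{upper} gives $\|\tau_h u-u\|_{L^1(A)}\le C(n)|h|^s(1-s)\M{F}_s(u,\Omega)$, and Fr\'echet--Kolmogorov yields the compactness directly. For the locally-finite-perimeter conclusion you do not need the full $\Gamma$-$\liminf$ inequality (that is a later, independent result): passing to the limit $i\to\infty$ in \eqref{main} with $u=\chi_{E_i}$, $s=s_i$ gives the Lipschitz translation bound $\|\tau_h\chi_E-\chi_E\|_{L^1(\Omega')}\le C(n)|h|\sup_i(1-s_i)\M{J}^1_{s_i}(E_i,\Omega)$, which already characterizes $BV_{\loc}$. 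So your proposal is correct in outline; the difference is one of self-containment, the paper replacing your appeal to the BBM literature with a direct proof of the averaging estimate, and obtaining the BV conclusion without invoking the $\Gamma$-$\liminf$.
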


Notice the scaling factor $(1-s)$, which accounts for the fact that $\M{J}^1_1(E,\Omega)=+\infty$
unless $E\subset \Omega^c$, or $\Omega\subset E$, as already shown by Br\'ezis \cite{Bre}.

Let $\omega_k$ denote the volume of the unit ball in $\R{k}$ for $k\ge 1$, and set $\omega_0:=1$.

\begin{trm}[$\Gamma$-convergence]\label{trm2}
For every measurable set $E\subset\R{n}$ we have
\begin{equation}\label{Gamma}
\begin{split}
\Gamma-\liminf_{s\uparrow 1}(1-s)\M{J}^1_{s}(E,\Omega)&\geq \omega_{n-1}P(E,\Omega),\\
\Gamma-\limsup_{s\uparrow 1}(1-s)\M{J}_{s}(E,\Omega)&\leq \omega_{n-1} P(E,\Omega),
\end{split}
\end{equation}
with respect to the local convergence in measure, i.e. the $L^1_{\rm
loc}$ convergence of the corresponding characteristic functions in
$\R{n}$.
\end{trm}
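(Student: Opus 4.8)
The two estimates in \eqref{Gamma} are of a different nature, and I would treat them separately. The $\liminf$ inequality only sees the interior term $\M{J}^1_s$ and is amenable to a \emph{slicing} argument reducing it to a one–dimensional statement. The $\limsup$ inequality also involves the boundary term $\M{J}^2_s$, and here the naive recovery sequence $E_s\equiv E$ fails: already for $E=\Omega$ one has $(1-s)\M{J}^2_s(\Omega,\Omega)=(1-s)\M{J}^1_s(\Omega,\R{n})\to\omega_{n-1}P(\Omega,\R{n})>0=\omega_{n-1}P(\Omega,\Omega)$, so the recovery sequence must be produced by modifying $E$ in a shrinking collar around $\partial\Omega$.

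\textit{The $\liminf$ inequality.} Passing to polar coordinates $y=x+\rho\omega$, $\rho>0$, $\omega\in S^{n-1}$, and slicing along the lines $t\mapsto z+t\omega$, $z\in\omega^\perp$, one obtains the exact identity $\M{J}^1_s(E,\Omega)=\tfrac12\int_{S^{n-1}}\int_{\omega^\perp}\M{J}^1_s(E_{z,\omega},\Omega_{z,\omega})\,dz\,d\mathcal{H}^{n-1}(\omega)$, where $E_{z,\omega}:=\{t\in\R{}:z+t\omega\in E\}$ and the inner functional is the one–dimensional one; dually, the integral–geometric formula for the perimeter gives $P(E,\Omega)=\frac{1}{2\omega_{n-1}}\int_{S^{n-1}}\int_{\omega^\perp}\#(\partial^*E_{z,\omega}\cap\Omega_{z,\omega})\,dz\,d\mathcal{H}^{n-1}(\omega)$, using $\int_{S^{n-1}}|\omega\cdot\nu|\,d\mathcal{H}^{n-1}(\omega)=2\omega_{n-1}$. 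Given $E_s\to E$ in $L^1_{\loc}(\Omega)$, I would pass to a subsequence realizing the $\liminf$ and then to one along which the slices $(E_s)_{z,\omega}$ converge in $L^1_{\loc}(\Omega_{z,\omega})$ for a.e.\ $(z,\omega)$; Fatou's lemma together with the two formulas reduces everything to the one–dimensional claim: if $A_j\to A$ in $L^1_{\loc}(I)$ with $I\subset\R{}$ a finite union of bounded intervals and $s_j\uparrow1$, then $\liminf_j(1-s_j)\M{J}^1_{s_j}(A_j,I)\ge\#(\partial^*A\cap I)$, the right–hand side being $+\infty$ when $\chi_A\notin BV(I)$. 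For this I would use superadditivity of $\M{J}^1_s$ over disjoint intervals to localize near the jumps of $A$ (or, when $\chi_A\notin BV$, near $N\to\infty$ disjoint intervals on which $A$ truly oscillates) and, on a fixed interval $J$ where $A$ and $A^c$ both have positive measure, invoke the minimality of the one–sided configuration — the one–dimensional instance of the \emph{local minimality of halfspaces} behind Lemma~\ref{star} — to bound $\M{J}^1_s(A_j,J)\ge\M{J}^1_s(A_j^\flat,J)$, with $A_j^\flat$ an interval of measure $|A_j\cap J|$ abutting an endpoint of $J$; since $|A_j\cap J|\to|A\cap J|\in(0,|J|)$ and the value of $\M{J}^1_s$ on a single jump is computed explicitly, $(1-s_j)\M{J}^1_{s_j}(A_j^\flat,J)\to1$, and summing over the intervals finishes the proof.

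\textit{The $\limsup$ inequality.} We may assume $P(E,\Omega)<\infty$. Fixing $\eta>0$ and working in boundary charts where $\partial\Omega$ is a Lipschitz graph, I would replace $E$ inside the collar $N_\eta:=\{\dist(\cdot,\partial\Omega)<\eta\}$ by the set $E^{(\eta)}$ which is \emph{constant in the direction transversal to $\partial\Omega$}, equal on each transversal fibre to a fixed slice of $E$ at depth $\theta\eta$ — the depth chosen, for a.e.\ $\theta$ by a co–area argument, so that the lateral surface thereby created in the collar has $\mathcal{H}^{n-1}$–measure $\le P(E,N_\eta\cap\Omega)=o_\eta(1)$ — and interpolated near the inner edge of the collar through a bi–Lipschitz stretching, so as not to create new boundary of $\mathcal{H}^{n-1}$–measure of order one. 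Then $E^{(\eta)}\to E$ in $L^1_{\loc}(\R{n})$ as $\eta\downarrow0$ and $P(E^{(\eta)},\Omega)\le P(E,\Omega)+o_\eta(1)$. For this fixed set, $(1-s)\M{J}^1_s(E^{(\eta)},\Omega)\to\omega_{n-1}P(E^{(\eta)},\Omega)$ — the lower bound following from the $\liminf$ inequality just proved (applied to the constant sequence), the upper bound holding for a fixed finite–perimeter set in a Lipschitz domain by polyhedral approximation and the explicit $1$–dimensional computation — while $\limsup_{s\uparrow1}(1-s)\M{J}^2_s(E^{(\eta)},\Omega)=o_\eta(1)$, because transversal constancy forces each point of $E^{(\eta)}\cap\Omega$ to interact with $(E^{(\eta)})^c\cap\Omega^c$ only through points at distance $\gtrsim\eta$ (negligible after multiplication by $1-s$) or through tangentially separated points in the collar, whose contribution is bounded by $(1-s)\M{J}^1_s(E^{(\eta)},N_\eta)\to\omega_{n-1}P(E^{(\eta)},N_\eta)=o_\eta(1)$. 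Hence $\limsup_{s\uparrow1}(1-s)\M{J}_s(E^{(\eta)},\Omega)\le\omega_{n-1}P(E,\Omega)+o_\eta(1)$, and a diagonal argument over $\eta\downarrow0$ produces the recovery sequence. The $\liminf$ part is a standard, if careful, Bourgain--Brezis--Mironescu/D\'avila–type slicing argument whose only delicate point is the sharp constant in the one–dimensional lower bound along a varying sequence $(A_j,s_j)$; the main obstacle is the collar construction in the $\limsup$ part, where $E$ must be modified near $\partial\Omega$ so as to increase $P(\cdot,\Omega)$ by at most $o_\eta(1)$ and, simultaneously, to screen $\partial\Omega$ well enough that the long–range interaction $\M{J}^2_s$ becomes negligible.
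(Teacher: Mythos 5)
Your proposal takes a genuinely different route from the paper, and while the slicing idea for the $\liminf$ is a legitimate alternative, there are two concrete gaps.

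\emph{The $\liminf$ inequality.} The paper runs a Fonseca--M\"uller blow-up: it localizes to rotated cubes, uses that the rescaled sets converge to a halfspace at $\mu$-a.e.\ point of the reduced boundary, reduces everything to the cell constant $\Gamma_n$ of \eqref{defGamman}, and then identifies $\Gamma_n=\omega_{n-1}$ via the gluing Proposition~\ref{propraccordo} and the Dirichlet-constrained minimality of the halfspace (Proposition~\ref{Hmin}). Your integral-geometric slicing is a valid alternative in the spirit of D\'avila; the slicing identity you write and the normalization $\int_{S^{n-1}}|\omega\cdot e|\,d\mathcal H^{n-1}=2\omega_{n-1}$ are correct, and Fatou does reduce the whole thing to a one-dimensional claim along a.e.\ line. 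The gap is in that one-dimensional claim. The inequality $\M{J}^1_s(A_j,J)\ge\M{J}^1_s(A_j^\flat,J)$, with $A_j^\flat$ a one-sided interval of the same measure, is a \emph{volume-constrained} rearrangement statement and is \emph{not} the content of Proposition~\ref{Hmin} or Lemma~\ref{star}: those say the halfspace minimizes $\M{J}_s(\cdot,Q)$ among competitors agreeing with it \emph{outside} $Q$, a Dirichlet problem. Your inequality has no exterior constraint, and it does not follow from the Riesz rearrangement inequality either, because $\M{J}^1_s(A,J)$ is a \emph{difference} of two integrals that both increase under rearrangement. The $1$-D rearrangement inequality is in fact true, but it needs its own proof (e.g.\ by polarization); alternatively, one can avoid it entirely by repeating the paper's gluing-plus-Dirichlet-minimality argument along each slice, which is essentially the $1$-D version of Lemmas~\ref{tilde} and~\ref{star}. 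As written, the proposal leaves this open.

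\emph{The $\limsup$ inequality.} Your observation that $E_s\equiv E$ fails because of $\M{J}^2_s$ is correct and motivates both constructions. But the collar modification does not replace the paper's machinery, it sits on top of it. Your chain of estimates still needs the sharp upper bound $\limsup_{s\uparrow1}(1-s)\M{J}^1_s(E^{(\eta)},\Omega)\le\omega_{n-1}P(E^{(\eta)},\Omega)$ for a fixed finite-perimeter set. Proposition~\ref{punt2} does not give it: its constant $n\omega_n/2$ is strictly larger than $\omega_{n-1}$ for $n\ge2$ (e.g.\ $\pi$ vs.\ $2$ when $n=2$). You gesture at ``polyhedral approximation and the explicit $1$-dimensional computation,'' but that is precisely Lemmas~\ref{poly} and~\ref{lemma*} of the paper, whose covering-by-small-cubes construction, combined with the transversality $P(\Pi,\partial\Omega)=0$ (Proposition~\ref{poli}, obtained by an arbitrarily small rotation), \emph{already} controls $\M{J}^2_s$ by trapping it in the thin collars $\Omega^\pm_\delta$. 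So the cylinder/coarea construction duplicates, in a more delicate form, a problem that the polyhedral approach solves for free. Moreover the bi-Lipschitz interpolation near the inner edge of the collar is left unspecified: one must check that gluing the constant-slice cylinder back to $E$ at depth $\eta$ does not create perimeter of order one for a general finite-perimeter $E$, which is exactly the issue that the gluing Proposition~\ref{propraccordo} is designed to handle, and is not addressed in your sketch.
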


We recall that \eqref{Gamma} means that
$$\liminf_{i\to \infty}(1-s_i)\M{J}^1_{s_i}(E_i,\Omega)\geq \omega_{n-1}P(E,\Omega)\qquad \text{whenever } \chi_{E_i}\to \chi_E \text{ in } L^1_{\loc}(\R{n}),\;s_i\uparrow 1,$$
and that for every measurable set $E$ and sequence $s_i\uparrow 1$ there exists a sequence $E_i$ with $\chi_{E_i}\to \chi_E$ in $L^1_{\loc}(\R{n})$ such that
$$\limsup_{i\to\infty}(1-s_i)\M{J}_{s_i}(E_i,\Omega)\leq \omega_{n-1} P(E,\Omega).$$

We finally show that as $s\uparrow 1$ local minimizers converge to
local minimizers, where by a local minimizer of
$\M{J}_s(\cdot,\Omega)$ we mean a Borel set $E\subset\R{n}$ such
that $\M{J}_s(E,\Omega)\le \M{J}_s(F,\Omega)$ whenever $E\Delta
F\Subset\Omega$. Notice that if $E$ is a local minimizer of
$\M{J}_s(\cdot ,\Omega)$ and $\Omega'\subset\Omega$, then $E$ is
also a local minimizer of $\M{J}_s(\cdot,\Omega')$. A similar
definition holds for $P(\cdot,\Omega)$.

\begin{trm}[Convergence of local minimizers]\label{trm3}
Assume that $s_i\uparrow 1$, $E_i$ are local minimizer of
$\M{J}_{s_i}(\cdot, \Omega)$, and $\chi_{E_i}\to\chi_E$ in
$L^1_{\loc}(\R{n})$. Then
\begin{equation}\label{energybou}
\limsup_{i\to \infty}(1-s_i)\M{J}_{s_i}(E_i,\Omega')< + \infty \qquad \forall\Omega'\Subset\Omega,
\end{equation}
$E$ is a local minimizer of $P(\cdot,\Omega)$ and
$(1-s_i)\M{J}_{s_i}(E_i,\Omega')\to \omega_{n-1}P(E,\Omega')$ whenever
$\Omega'\Subset\Omega$ and $P(E,\partial\Omega')=0$.
\end{trm}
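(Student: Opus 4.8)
The plan is to combine the $\Gamma$-convergence estimates of Theorem~\ref{trm2} with the equi-coercivity of Theorem~\ref{trm1} in the standard way, but the main work is to produce the energy bound \eqref{energybou}, since without it we cannot even apply Theorem~\ref{trm2}. First I would fix $\Omega'\Subset\Omega$ and choose an intermediate domain $\Omega'\Subset\Omega''\Subset\Omega$ with Lipschitz boundary. The idea is to test the local minimality of $E_i$ in $\Omega''$ against a competitor $F_i$ obtained by replacing $E_i$ inside $\Omega''$ by a fixed halfspace (or a smooth set) that agrees with $E_i$ near $\partial\Omega''$; by the recovery sequence part of Theorem~\ref{trm2} applied on $\Omega''$, or more directly by an explicit computation on the halfspace using the local minimality of halfspaces recalled from \cite{CRS}, one gets $(1-s_i)\M{J}_{s_i}(F_i,\Omega'')\le C(\Omega'',\Omega)$ uniformly in $i$. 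Minimality then gives $(1-s_i)\M{J}_{s_i}(E_i,\Omega'')\le C$, and since $\M{J}_{s_i}(E_i,\Omega')\le\M{J}_{s_i}(E_i,\Omega'')$ (monotonicity of the domain in both $\M{J}^1$ and $\M{J}^2$), \eqref{energybou} follows. The delicate point here is the construction of the competitor so that the modification does not interact badly, through the nonlocal term $\M{J}^2$, with the part of $E_i$ outside $\Omega''$; one should localize the change well inside $\Omega''$ and use that $\M{J}^2_{s_i}$ integrates $|x-y|^{-n-s_i}$ over pairs separated by $\partial\Omega''$, which is controlled once the modification region stays at positive distance from $\partial\Omega''$.

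Once \eqref{energybou} is available, by Theorem~\ref{trm1} (whose hypothesis is exactly the uniform bound on $(1-s_i)\M{J}^1_{s_i}(E_i,\Omega')$, which is weaker than \eqref{energybou}) the set $E$ has locally finite perimeter in $\Omega$, so $P(E,\cdot)$ is a well-defined Radon measure on $\Omega$. Next I would prove that $E$ is a local minimizer of $P(\cdot,\Omega)$. Fix a competitor $F$ with $E\Delta F\Subset\Omega$, choose $\Omega'$ with $E\Delta F\Subset\Omega'\Subset\Omega$ and $P(E,\partial\Omega')=0$ (possible for all but countably many intermediate domains). Using the recovery sequence of Theorem~\ref{trm2} on $\Omega'$ for the set $F$, glued to $E_i$ outside $\Omega'$, I obtain competitors $F_i$ with $F_i\Delta E_i\Subset\Omega'\Subset\Omega$ and $\limsup_i(1-s_i)\M{J}_{s_i}(F_i,\Omega')\le\omega_{n-1}P(F,\Omega')$; here one again needs to check that gluing does not spoil the $\M{J}^2$ term, exactly as in the previous paragraph. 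By local minimality of $E_i$ in $\Omega'$, $(1-s_i)\M{J}_{s_i}(E_i,\Omega')\le(1-s_i)\M{J}_{s_i}(F_i,\Omega')$; passing to the liminf on the left and using the first inequality in \eqref{Gamma} (the $\Gamma$-liminf for $\M{J}^1$, hence a fortiori for $\M{J}$), I get $\omega_{n-1}P(E,\Omega')\le\omega_{n-1}P(F,\Omega')$, and since $E=F$ outside $\Omega'$ this yields $P(E,\Omega)\le P(F,\Omega)$, i.e. local minimality of $E$.

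Finally, for the convergence of energies, fix $\Omega'\Subset\Omega$ with $P(E,\partial\Omega')=0$. The $\Gamma$-liminf inequality gives
\begin{equation*}
\liminf_{i\to\infty}(1-s_i)\M{J}_{s_i}(E_i,\Omega')\ge\liminf_{i\to\infty}(1-s_i)\M{J}^1_{s_i}(E_i,\Omega')\ge\omega_{n-1}P(E,\Omega').
\end{equation*}
For the reverse, I would pick a slightly smaller $\Omega''\Subset\Omega'$, again with $P(E,\partial\Omega'')=0$, take the recovery sequence $F_i$ for $E$ on $\Omega''$ (so $F_i\Delta E_i\Subset\Omega''$, equal to $E_i$ near $\partial\Omega''$), and use minimality of $E_i$ together with $\M{J}_{s_i}(F_i,\Omega'')\le\M{J}_{s_i}(F_i,\Omega')+(\text{tail terms})$; letting $\Omega''\uparrow\Omega'$ along a suitable sequence and using $P(E,\partial\Omega')=0$ to make the boundary contributions and the discrepancy between $\Omega''$ and $\Omega'$ vanish, one gets $\limsup_i(1-s_i)\M{J}_{s_i}(E_i,\Omega')\le\omega_{n-1}P(E,\Omega')$. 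Combining the two inequalities gives the claimed convergence. The hardest part throughout is the careful handling of the nonlocal boundary term $\M{J}^2$ under the cut-and-paste constructions: unlike in the classical local setting, modifying $E_i$ on a compact subset of $\Omega'$ still changes $\M{J}^2_{s_i}(\cdot,\Omega')$, and one must quantify this change and show it is controlled by $(1-s_i)^{-1}$ times an error that is either uniformly bounded or vanishing, which is where the choice of domains with $P(E,\partial\Omega')=0$ and the estimates underlying Theorem~\ref{trm2} enter decisively.
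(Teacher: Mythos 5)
Your high-level blueprint (energy bound via a competitor + recovery sequences + gluing + $\Gamma$-liminf) is the right one and matches the paper's strategy, but there is a genuine technical gap in how the annular error terms from the gluing are controlled.

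For the energy bound \eqref{energybou}, the paper's competitor is much simpler and cleaner than what you sketch: it takes $F_i:=E_i\cap(\Omega^c\cup\Omega_\delta)$, i.e.\ it removes $E_i$ entirely from $\Omega\setminus\Omega_\delta$. After using minimality of $E_i$, the resulting terms involve $\M{J}^1_{s_i}(\Omega\setminus\Omega_\delta,\Omega)$ and $\M{J}^1_{s_i}(\Omega,\R{n})$, which are uniformly controlled by $n\omega_n P(\Omega\setminus\Omega_\delta,\R{n})$ and $n\omega_n P(\Omega,\R{n})$ via Proposition~\ref{punt2}. Your proposal to use a halfspace ``that agrees with $E_i$ near $\partial\Omega''$'' is vague: constructing such an agreement already requires a gluing with the same nonlocal error terms you are trying to bound, so it is circular unless you quantify it; the paper's choice of $\emptyset$ as the competitor avoids this issue entirely.

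The more serious gap is in the minimality/energy convergence argument. When you glue a recovery sequence $F_i$ to $E_i$ using Proposition~\ref{propraccordo}, the error terms that must be killed are of the form $\limsup_i(1-s_i)\M{J}^1_{s_i}(E_i, A)$ where $A$ is a thin annulus around $\partial\Omega'$. This is an upper bound on the weak$^*$ limit $\alpha(A)$ of the measures $\alpha_i(\cdot):=(1-s_i)\M{J}^1_{s_i}(E_i,\cdot)$, and what one needs is $\alpha(\partial\Omega')=0$, not $P(E,\partial\Omega')=0$. At this stage of the argument, one does not yet know that $\alpha\ll P(E,\cdot)$ (indeed the identity $\alpha=\omega_{n-1}P(E,\cdot)$ is the conclusion one is after), so choosing $\Omega'$ to avoid the atoms of $P(E,\cdot)$ does not control the annular energy of $E_i$. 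The paper resolves this by introducing the monotone set-function machinery (De Giorgi--Letta, Theorem~\ref{DGL} and Proposition~\ref{useful}): one extracts a subsequence along which $\alpha_i\to\alpha$ weakly, runs the minimality argument on balls $B_R$ with $\alpha(\partial B_R)=0$ (a generic condition, since $\alpha$ is super-additive and locally finite, so only countably many radii can fail), and only at the end identifies $\alpha=P(E,\cdot)$ as regular monotone set functions. Without this step, or an equivalent substitute, your chain of inequalities does not close: you would be left with an uncontrolled $\limsup_i\alpha_i$ on thin annuli around $\partial\Omega'$.
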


We point out that $\Gamma$-convergence results for functionals
reminiscent of $\M{J}^1_s(\cdot,\R{n})$ have been proven in
\cite{ngu1}, \cite{ngu2}.
\medskip

We fix some notation used throughout the paper:

\noindent -- we write $x\in\R{n}$ as $(x',x_n)$ with $x'\in\R{n-1}$
and $x_n\in\R{}$;

\noindent -- we denote by $H$ the halfspace $\{x:\ x_n\leq 0\}$ and
by $Q=(-1/2,1/2)^n$ the canonical unit cube;

\noindent -- we denote by $B_r(x)$ the ball of radius $r$ centered at $x$ and,
unless otherwise specified, $B_r:= B_r(0)$.

\noindent -- for every $h\in \R{n}$ and function $u$ defined on
$U\subset \R{n}$ we set $\tau_h u(x):= u(x+h)$ for all $x\in U-h$.

For the definition and basic properties of the perimeter $P(E,\Omega)$ in the
sense of De Giorgi we refer to the monographs \cite{AFP} and \cite{giu}.

\section{Proof of Theorem~\ref{trm1}}

The proof is a direct consequence of the Frechet-Kolmogorov
compactness criterion in $L^p_{{\loc}}$ (applied with $p=1$),
ensuring pre-compactness of any family $\M{G}\subset
L^1_{\loc}(\Omega)$ satisfying
$$
\lim_{h\to 0}\sup_{u\in\M{G}}\|\tau_hu-u\|_{L^1(\Omega')}=0
\qquad\forall\Omega'\Subset\Omega,
$$
and of the following pointwise upper bound on $\|\tau_hu-u\|_{L^1}$:
for all $u\in L^1(\Omega)$, $A\Subset\Omega$, $h\in \R{n}$ with
$|h|<\dist(A,\partial\Omega)/2$ and $s\in (0,1)$ we have
\begin{equation}\label{upper}
\|\tau_h u-u\|_{L^1(A)}\leq C(n)|h|^s(1-s) \M{F}_s(u,\Omega),
\end{equation}
where
\begin{equation}\label{F}
\M{F}_s(u,\Omega):=\int_\Omega\int_\Omega\frac{|u(x)-u(y)|}{|x-y|^{n+s}}dxdy.
\end{equation}
The functional $\M{F}_s$ is obviously related to $\M{J}_s^1$ by
$$
\M{F}_s(\chi_E,\Omega)=2\M{J}^1_s(E,\Omega).
$$
The upper bound \eqref{upper} is a
direct consequence of Proposition~\ref{pmazya} below, whose proof
can be found in \cite{Ma}. Since the inequality is not explicitly
stated in \cite{Ma}, we repeat
it for the reader's convenience.

\begin{prop} \label{pmazya} For all $u\in L^1(\Omega)$, $A\Subset\Omega$ and
$s\in (0,1)$ we have
\begin{equation}\label{main}
 \frac{\|\tau_h u-u\|_{L^1(A)}}{|h|^{s}}
\le C(n) (1-s) \int_{B_{|h|}} \frac{\|\tau_\xi u- u \|_{L^1(A_{|h|}
)}}{|\xi|^{n+s}} d\xi
\end{equation}
whenever $0<|h|<\dist(A,\partial\Omega)/2$, and
$A_{|h|}:=\{x\in\R{n}:\dist(x,A)<|h|\}$.
\end{prop}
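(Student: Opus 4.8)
The plan is to prove the pointwise inequality \eqref{main} by reducing the problem to a one-dimensional estimate along the segment joining $x$ to $x+h$, and then averaging over a ball of translations. First I would exploit the translation invariance of the statement: after a rotation we may assume $h = |h| e_n$, and it suffices to bound $\|\tau_h u - u\|_{L^1(A)}$ by integrating the telescoping identity
\begin{equation*}
u(x+h) - u(x) = \bigl(u(x+h) - u(x+\xi)\bigr) + \bigl(u(x+\xi) - u(x)\bigr)
\end{equation*}
over $\xi \in B_{|h|}$, dividing by $|B_{|h|}| = \omega_n |h|^n$. Taking absolute values, integrating in $x \in A$, and using that $x + \xi \in A_{|h|}$ when $|\xi| < |h|$, one gets
\begin{equation*}
\omega_n |h|^n \|\tau_h u - u\|_{L^1(A)} \le \int_{B_{|h|}} \|\tau_{h-\xi} u - u\|_{L^1(A_{|h|})}\,d\xi + \int_{B_{|h|}} \|\tau_\xi u - u\|_{L^1(A_{|h|})}\,d\xi.
\end{equation*}
The second term is already of the desired shape up to the weight $|\xi|^{-n-s}$, which on $B_{|h|}$ satisfies $|\xi|^{-n-s} \ge |h|^{-n-s}$, so $\|\tau_\xi u - u\|_{L^1(A_{|h|})} \le |h|^{n+s} |\xi|^{-n-s} \|\tau_\xi u - u\|_{L^1(A_{|h|})}$, producing the factor $|h|^{n+s}$ that combines with $|h|^{-n}$ to give $|h|^s$ on the left.

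The first term requires one more step: the vector $h - \xi$ has $|h - \xi| < 2|h|$, so it is not directly controlled by an integral over $B_{|h|}$. Here I would apply the same averaging trick once more, or, more efficiently, use a dyadic/scaling argument: write $h - \xi$ as a telescoping sum of increments each of length comparable to $|h|$, or simply observe that by a change of variables the integral $\int_{B_{|h|}} \|\tau_{h-\xi} u - u\|_{L^1(A_{|h|})}\,d\xi$ equals $\int_{B_{|h|}(h)} \|\tau_\eta u - u\|_{L^1(A_{|h|})}\,d\eta$, which is supported in the annular region $|\eta| \le 2|h|$. On that region $|\eta|^{-n-s} \ge (2|h|)^{-n-s}$, so again $\|\tau_\eta u - u\|_{L^1(A_{|h|})} \le 2^{n+s} |h|^{n+s} |\eta|^{-n-s} \|\tau_\eta u - u\|_{L^1(A_{|h|})}$, and since $B_{|h|}(h) \subset B_{2|h|} \subset B_{2|h|}$ we bound the integral by $C(n) |h|^{n+s} \int_{B_{2|h|}} |\eta|^{-n-s} \|\tau_\eta u - u\|_{L^1(A_{|h|})}\,d\eta$. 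To get the integration domain back to $B_{|h|}$ as in \eqref{main}, I would iterate: split $B_{2|h|} = B_{|h|} \cup (B_{2|h|}\setminus B_{|h|})$ and on the outer annulus reapply the triangle inequality to reduce increments of size $\le 2|h|$ to increments of size $\le |h|$, summing a geometric series in the scale. Alternatively, and this is cleaner, one notes \eqref{main} as stated already allows the domain $B_{|h|}$ on the right — so the cleanest route is the averaging over $B_{|h|/2}$ rather than $B_{|h|}$ at the outset, so that both $\xi$ and $h - \xi$ stay within $B_{|h|}$; I would choose the averaging radius to make this automatic.

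The role of the constant $(1-s)$ is the crucial quantitative point and deserves care: it appears because $\int_{B_r} |\xi|^{-n-s}\,d\xi = n\omega_n \int_0^r \rho^{-1-s}\,d\rho = \frac{n\omega_n}{s} r^{-s}$ diverges like $(1-s)^{-1}$ as... actually it diverges at the origin for every $s>0$; the relevant finite quantity is rather $\int_{B_{|h|}} |\xi|^{-n-s}|\xi|\,d\xi$ or the comparison with $\int \rho^{-1-s}d\rho$ near $\rho = |h|$, and tracking the normalization $\omega_n|h|^n$ against $|h|^{n+s}\int_{B_{|h|}}|\xi|^{-n-s}d\xi$ shows the two sides are genuinely comparable with an $s$-independent constant — so in fact no $(1-s)$ factor is lost here; the $(1-s)$ in \eqref{upper} comes later, in \eqref{upper} itself, from integrating \eqref{main} in $h$ and using $\int_{B_R}|\xi|^{-n-s}d\xi \sim (1-s)^{-1}$ being absorbed. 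Let me restate the plan for \eqref{main} proper: \textbf{(i)} reduce to $h$ along a coordinate axis; \textbf{(ii)} average the telescoping identity over $\xi \in B_{|h|}$ and split into the $\tau_\xi$ and $\tau_{h-\xi}$ pieces; \textbf{(iii)} on $B_{|h|}$ bound $|\xi|^{-n-s}$ from below by a multiple of $|h|^{-n-s}$ to insert the kernel weight, absorbing the remaining powers of $|h|$; \textbf{(iv)} handle the $\tau_{h-\xi}$ piece by a change of variables and, if needed, one iteration of steps (ii)–(iii) at the doubled scale, summing the resulting geometric series to recover the clean integral over $B_{|h|}$ with an $n$-dependent constant. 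The main obstacle is bookkeeping in step (iv): controlling the translate by $h-\xi$, whose length can reach $2|h|$, without it escaping the ball $B_{|h|}$ over which \eqref{main} integrates — handled either by choosing a smaller averaging radius or by a scale-summation argument — and making sure the constant produced there depends only on $n$ and stays bounded as $s \uparrow 1$.
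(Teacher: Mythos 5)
Your proposal has a genuine gap: it does not produce the factor $(1-s)$ on the right-hand side of \eqref{main}, and this factor is the whole quantitative point of the proposition. Tracing your argument — average the telescoping identity over $\xi$ in a ball of radius comparable to $|h|$, then insert the kernel weight by the pointwise bound $|\xi|^{-n-s}\ge c\,|h|^{-n-s}$ — you obtain, after dividing by $\omega_n|h|^n$,
\begin{equation*}
\|\tau_h u - u\|_{L^1(A)} \;\le\; C(n)\,|h|^{s}\int_{B_{|h|}}\frac{\|\tau_\xi u - u\|_{L^1(A_{|h|})}}{|\xi|^{n+s}}\,d\xi,
\end{equation*}
with $C(n)$ independent of $s$ and no $(1-s)$. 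That is a \emph{strictly weaker} statement: to see that it is lossy by exactly $(1-s)^{-1}$, test $u=\chi_E$ with $\partial E$ smooth; then $\|\tau_\xi u-u\|_{L^1}\sim |\xi|\,P(E)$, the left side is $\sim|h|^{1-s}P(E)$, while the integral on the right is $\sim P(E)\int_0^{|h|}\rho^{-s}d\rho = P(E)\,|h|^{1-s}/(1-s)$. Without the $(1-s)$ you cannot deduce the estimate \eqref{upper} with its $(1-s)$, and Theorem~\ref{trm1} — whose hypothesis controls only $(1-s)\M{J}^1_{s_i}$, a quantity that may stay bounded while $\M{J}^1_{s_i}$ blows up like $(1-s_i)^{-1}$ — would not follow.

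Your explanation of where you think the $(1-s)$ enters is also incorrect on two counts. First, \eqref{upper} is obtained from \eqref{main} simply by bounding $\int_{B_{|h|}}\|\tau_\xi u-u\|_{L^1(A_{|h|})}|\xi|^{-n-s}d\xi$ by $\M{F}_s(u,\Omega)$; no further integration in $h$ is performed, so if \eqref{main} lacks the $(1-s)$, so does \eqref{upper}. Second, $\int_{B_R}|\xi|^{-n-s}d\xi = n\omega_n\int_0^R\rho^{-1-s}d\rho$ is $+\infty$ for every $s>0$; it is not $\sim(1-s)^{-1}$. What actually produces the $(1-s)$ in the paper is an integration \emph{over the averaging scale}, which your argument fixes once and for all at $z\sim|h|$. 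The paper's Proposition~\ref{prop1} proves, by mollification at an arbitrary scale $z\in(0,|h|]$, the intermediate inequality $\|\tau_h u - u\|_{L^1(A)}\le C(n)\,|h|\,z^{-(n+1)}\int_{B_z}\|\tau_\xi u-u\|_{L^1(A_{|h|})}d\xi$. Multiplying by $z^{-s}$ and integrating $z$ over $(0,|h|]$ gives $\int_0^{|h|}z^{-s}dz = |h|^{1-s}/(1-s)$ on the left, and this is precisely the source of the $(1-s)$; Hardy's inequality \eqref{Hardy} is then used to collapse the resulting double integral $\int_0^{|h|}z^{-(n+s+1)}\int_{B_z}\cdots\,d\xi\,dz$ into the single integral over $B_{|h|}$ with the kernel $|\xi|^{-n-s}$. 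Your single-scale averaging is essentially the $z=|h|$ case of Proposition~\ref{prop1} (in slightly cruder form, without the mollifier); the step you are missing is the integration over $z$ together with Hardy. Incidentally, the residual concern in your step (iv) about the translate $h-\xi$ escaping $B_{|h|}$ is easily handled — average $\xi$ over $B_{|h|/2}(h/2)$, which places both $\xi$ and $h-\xi$ in $B_{|h|}$ — but that is a side issue compared to the missing $(1-s)$.
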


We start with two preliminary results.

\begin{prop}\label{prop1}
Let $u\in L^1(\Omega)$, $h\in\R{n}$ and $A\Subset\Omega$ open with
$|h|<\dist(A,\partial\Omega)/2$. Then for any $z\in (0, |h|]$ we
have:
\begin{equation}\label{mainest}
\|\tau_h u-u \|_{L^1(A)} \le C(n)\frac { |h|}{z^{n+1}} \int_{B_z}
\|\tau_\xi u-u\|_{L^1(A_{|h|})}d\xi,
\end{equation}
where $A_{|h|}$ is as in Proposition~\ref{pmazya}.
\end{prop}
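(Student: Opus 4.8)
\noindent\emph{Sketch of the argument.}
The plan is to split the single ``long'' translation by $h$ into $N\approx|h|/z$ short translations along the segment $[x,x+h]$, and to replace each of them by a translation that has been ``smeared'' by an auxiliary vector $\zeta$ of size comparable to $z$, which is then integrated out. Concretely, set $N:=\lceil 2|h|/z\rceil$, so that $N\ge 2$ (because $z\le|h|$) and $N\le 3|h|/z$, and $\rho:=|h|/(2N)$, so that $\rho\le z/4$ and $|h|/N+\rho=3|h|/(2N)\le z$. For $x\in A$ and $\zeta\in B_\rho$ one starts from the telescoping identity
\begin{equation*}
\begin{split}
u(x+h)-u(x)&=\sum_{k=1}^{N}\Bigl[u\bigl(x+\tfrac kN h\bigr)-u\bigl(x+\tfrac{k-1}{N}h+\zeta\bigr)\Bigr]\\
&\quad+\sum_{k=1}^{N}\Bigl[u\bigl(x+\tfrac{k-1}{N}h+\zeta\bigr)-u\bigl(x+\tfrac{k-1}{N}h\bigr)\Bigr]
\end{split}
\end{equation*}
(the $\zeta$-terms cancel pairwise, index by index, between the two sums), then one takes absolute values, applies the triangle inequality, integrates in $x$ over $A$, and averages in $\zeta$ over $B_\rho$.

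Next one handles the two families of terms by the measure--preserving changes of variables $y=x+\tfrac{k-1}{N}h+\zeta$ in the first sum and $y=x+\tfrac{k-1}{N}h$ in the second. Since $\tfrac{k-1}{N}|h|+\rho\le |h|-\tfrac{|h|}{2N}<|h|$, every translated copy of $A$ that appears is contained in $A_{|h|}$, and the hypothesis $|h|<\dist(A,\partial\Omega)/2$ guarantees that $A_{|h|}$, together with all the further translates of it occurring below, lies inside $\Omega$, so all the integrals are legitimate and Fubini's theorem applies. In this way the $k$-th term of the first sum is bounded by $\frac1{|B_\rho|}\int_{B_\rho}\|\tau_{h/N-\zeta}u-u\|_{L^1(A_{|h|})}\,d\zeta$ and the $k$-th term of the second by $\frac1{|B_\rho|}\int_{B_\rho}\|\tau_\zeta u-u\|_{L^1(A_{|h|})}\,d\zeta$. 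Performing the substitution $\zeta\mapsto h/N-\zeta$ in the first integral and using $B_\rho(h/N)\subseteq B_z$ (valid since $|h/N|+\rho=3|h|/(2N)\le z$) and $B_\rho\subseteq B_z$ in the second, each of the $2N$ terms is at most $\frac1{|B_\rho|}\int_{B_z}\|\tau_\xi u-u\|_{L^1(A_{|h|})}\,d\xi$.

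Summing the $2N$ contributions gives $\|\tau_h u-u\|_{L^1(A)}\le \dfrac{2N}{|B_\rho|}\int_{B_z}\|\tau_\xi u-u\|_{L^1(A_{|h|})}\,d\xi$, and since $|B_\rho|=\omega_n(|h|/2N)^n$ one has $\dfrac{2N}{|B_\rho|}=\dfrac{2^{\,n+1}N^{\,n+1}}{\omega_n|h|^n}\le C(n)\dfrac{|h|}{z^{\,n+1}}$ from $N\le 3|h|/z$, which is exactly \eqref{mainest}. The genuinely mechanical parts are Fubini's theorem (applicable because $u\in L^1$) and the elementary inclusions between translated copies of $A$; the one point that needs real care is the simultaneous bookkeeping that (i) keeps every translated domain inside $A_{|h|}$, which forces $\rho\lesssim|h|/N$, and (ii) keeps the effective translation $h/N-\zeta$ inside $B_z$, which forces $N\gtrsim|h|/z$. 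Balancing these two constraints is precisely what produces the exponent $z^{n+1}$: one power of $z$ from the number of steps $N\sim|h|/z$, and $n$ further powers from the normalization $|B_\rho|^{-1}$ with $\rho\sim z$.
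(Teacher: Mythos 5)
Your proof is correct, and it takes a genuinely different route from the paper's. The paper mollifies: it fixes $\varphi\in C^1_c(B_1)$ with unit integral, splits $u=U(\cdot,z)+V(\cdot,z)$ into a mollified part and a remainder, controls $U(x+h,z)-U(x,z)$ via the fundamental theorem of calculus and the bound $|\nabla_x U(x,z)|\le z^{-(n+1)}\sup|\nabla\varphi|\int_{B_z}|\tau_y u(x)-u(x)|\,dy$, controls $V$ directly, and integrates over $A$. You avoid any smoothing kernel: you telescope the long translation $h$ into $N\sim|h|/z$ steps of length $|h|/N$, insert a ``smearing'' vector $\zeta\in B_\rho$ with $\rho\sim z$ at each node, use translation-invariance to reduce every term to $\|\tau_\xi u-u\|_{L^1(A_{|h|})}$ for some $\xi\in B_z$, and average over $\zeta$; the count $2N$ of terms and the normalization $|B_\rho|^{-1}$ jointly produce the $|h|/z^{n+1}$ factor. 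I checked the bookkeeping ($N\ge2$, $N\le 3|h|/z$, $\rho\le z/4$, $|h|/N+\rho\le z$, all translates of $A$ land in $A_{|h|}$ since the shifts have norm $<|h|$, and $A_{|h|}$ plus a further shift of norm $\le|h|$ stays in $\Omega$ because $|h|<\dist(A,\partial\Omega)/2$) and it all closes, with $C(n)=6^{n+1}/\omega_n$. What your approach buys is elementariness (no mollifier, only the triangle inequality, translation invariance of Lebesgue measure, and Tonelli for a nonnegative integrand); what the paper's approach buys is brevity and the absence of the $N$-versus-$\rho$ balancing, at the cost of invoking the gradient of a cutoff. Both yield dimensional constants that are irrelevant for the $\Gamma$-convergence application.
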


\begin{proof}
Fix a non-negative function $\varphi\in C^1_c(B_1)$  with $\int_{B_1}\varphi dx=1$. For $x\in
A$ and $z\in(0,|h|]$ we write
\begin{align*}
u(x)&=\frac 1 {z^n} \int_{B_z} u(x+y) \varphi\Big( \frac
{y}{z}\Big)dy
+\frac 1 {z^n} \int_{B_z} (u(x)-u(x+y)) \varphi\Big( \frac {y}{z}\Big)dy\\
&=: U(x,z)+V(x,z).
\end{align*}
Then we have
\begin{equation}\label{punt}
|u(x+h)-u(x)|\le |U(x+h,z)-U(x,z)|+|V(x+h,z)|+|V(x,z)|.
\end{equation}
The second and third terms can be easily estimated as follows:
\[
|V(x+h,z)|+|V(x,z)|\le\frac {\sup|\varphi|} {z^n} \int_{B_z}
\left\{|\tau_y u(x)-u(x)|+|\tau_y u(x+h)-u(x+h)|\right\}dy.
\]
For the first one instead notice that
\begin{align*}
\nabla_x U(x,z) &=  -
\frac 1 {z^{n+1}} \int_{B_z(x)} u(y) \nabla \varphi\Big( \frac {y-x}{z}\Big)dy\\
& = -\frac 1 {z^{n+1}} \int_{B_z(x)} (u(y)-u(x)) \nabla
\varphi\Big( \frac {y-x}{z}\Big)dy
\end{align*}
and so
\begin{align*}
|U(x+h,z)-U(x,z)|&\le  |h|\int_0^1 |\nabla_x U(x+sh,z)| ds \\
&\le\sup|\nabla\varphi|\frac {|h|} {z^{n+1}} \int_0^1 \int_{B_z}
|u(y+x+sh)-u(x+sh)|dyds.
\end{align*}
Notice now that $ z\le |h| $ and so $1\leq |h|/z$, hence from
\eqref{punt} we have:
\begin{eqnarray*}
|u(x+h)-u(x)|
&\le& C \Big\{ \frac 1 {z^n} \int_{B_z} |\tau_y u(x)-u(x)|+|\tau_y
u(x+h)-u(x+h)|dy\\
&&+\frac {|h|} {z^{n+1}} \int_0^1  \int_{B_z} |u(y+x+sh)-u(x+sh)|dy ds\Big\}\\
&\le& C \frac {|h|} {z^{n+1}} \Big\{  \int_{B_z} |\tau_y
u(x)-u(x)|+|\tau_y u(x+h)-u(x+h)|dy\\
&&+\int_0^1  \int_{B_z} |\tau_y u(x+sh)-u(x+sh)|dy ds\Big\},
\end{eqnarray*}
 with $C=\sup|\varphi|+\sup|\nabla\varphi|$.
 Integrating both sides over $A$ we infer \eqref{mainest} with $C(n)=3C$.
\end{proof}

Recall now the following version of Hardy's inequality:
\begin{prop}Let $g:\R{}\to [0,\infty)$ be a Borel
function, then for every $s>0$ we have
\begin{equation}\label{Hardy}
\int_0^r \frac 1 {\xi^{n+s+1}}\int_0^\xi g(t) dt d\xi \leq
\frac{1}{n+s}\int_0^r \frac {g(t)} {t^{n+s}} dt \qquad\forall r\geq
0.
\end{equation}
\end{prop}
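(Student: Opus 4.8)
The plan is to reduce the double integral on the left-hand side to a single integral by exchanging the order of integration in the $(\xi,t)$ region $\{0<t<\xi<r\}$, and then estimating the resulting inner $\xi$-integral. First I would write
\[
\int_0^r \frac{1}{\xi^{n+s+1}}\int_0^\xi g(t)\,dt\,d\xi
=\int_0^r g(t)\left(\int_t^r \frac{d\xi}{\xi^{n+s+1}}\right)dt,
\]
which is legitimate by Tonelli's theorem since $g\ge 0$ and the integrand is measurable and non-negative on the triangular region. The key elementary computation is then the inner integral: since $n+s+1>1$,
\[
\int_t^r \frac{d\xi}{\xi^{n+s+1}}
=\frac{1}{n+s}\left(\frac{1}{t^{n+s}}-\frac{1}{r^{n+s}}\right)
\le \frac{1}{n+s}\cdot\frac{1}{t^{n+s}}.
\]
Substituting this bound back gives exactly
\[
\int_0^r \frac{1}{\xi^{n+s+1}}\int_0^\xi g(t)\,dt\,d\xi
\le \frac{1}{n+s}\int_0^r \frac{g(t)}{t^{n+s}}\,dt,
\]
which is \eqref{Hardy}.

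There is essentially no obstacle here: the only points requiring a word of care are the applicability of Tonelli (guaranteed by non-negativity of $g$, so both sides are well-defined in $[0,+\infty]$ and the interchange is unconditional), and the fact that the discarded term $r^{-(n+s)}/(n+s)$ times $\int_0^r g\,dt$ is non-negative so dropping it only increases the right-hand side. If $\int_0^r g(t)t^{-(n+s)}\,dt=+\infty$ the inequality is trivial, and if it is finite the computation above is rigorous. The case $r=0$ is vacuous since both sides vanish. Thus the statement follows immediately.
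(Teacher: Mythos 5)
Your proof is correct and takes essentially the same route as the paper: exchange the order of integration by Tonelli, compute the inner $\xi$-integral explicitly, and drop the nonnegative term $r^{-(n+s)}/(n+s)$. The only difference is that you spell out the justification for Tonelli and the trivial edge cases, which the paper leaves implicit.
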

 \begin{proof} We have
 \begin{align*}
& \int_0^r \frac 1 {\xi^{n+s+1}}\int_0^\xi g(t) dt d\xi
 =\int_0^r g(t)\int_t^r \frac 1 {\xi^{n+s+1}} d\xi dt\\
 &=\frac{1}{n+s}\int_0^r g(t)\Big(\frac{1}{t^{n+s}}-\frac{1}
 {r^{n+s}}\Big)dt\le\frac{1}{n+s}\int_0^r \frac{g(t)}{t^{n+s}} dt.
\end{align*}
\end{proof}

\noindent\emph{Proof of Proposition~\ref{pmazya}.} Multiply both sides of
\eqref{mainest} by $ z^{-s} $ and integrate with respect to $z$ between $ 0 $
and $ |h| $ to obtain
\[
\frac{|h|^{(1-s)}}{(1-s)}\|\tau_h u-u\|_{L^1(A)} \le
C(n)|h|\int_0^{|h|} \frac { 1}{z^{n+s+1}} \int_{B_z} \|\tau_\xi
u-u\|_{L^1(A_{|h|})}d\xi dz.
\]
 Now apply inequality \eqref{Hardy} with
 \[ g(t):=\int_{\partial B_t}  \|\tau_\xi u-u\|_{L^1(A_{|h|})} d\mathcal H^{n-1}(\xi)
 \]
 and obtain
 \begin{multline}
 \int_0^{|h|} \frac { 1}{z^{n+s+1}}
 \int_{B_z} \|\tau_\xi u-u\|_{L^1(A_{|h|})}d\xi dz
 =\int_0^{|h|} \frac { 1}{z^{n+s+1}} \int_0^z g(t) dt dz\\
 \leq C(n)\int_0^{|h|} \frac { 1}{t^{n+s}}  g(t) dt
=C(n)\int_{B_{|h|}} \frac { \|\tau_\xi u
-u\|_{L^1(A_{|h|})}}{|\xi|^{n+s}} d\xi.
 \end{multline}
 Putting all together
 \[
 \frac{\|\tau_h u-u\|_{L^1(A)}}{(1-s)}
 \le C(n)|h|^{s}\int_{B_{|h|}} \frac { \|\tau_\xi u -u\|_{L^1(A_{|h|})}}
 {|\xi|^{n+s}} d\xi
 \]
 and the thesis follows.
\hfill $\square$

\section{Proof of Theorem \ref{trm2}}

In the proof of the $\liminf$ inequality we shall adapt to this
framework the blow-up technique introduced, for the first time in
the context of lower semicontinuity, by Fonseca and M\"uller in
\cite{FM}. The proof of the $\limsup$ inequality, which is typically
constructive and by density, is slightly different from the
analogous results in \cite{CV}, since we approximate with polyhedra,
rather than $C^{1,\alpha}$ sets. Notice also that the natural
strategies in the proof of the $\liminf$ and $\limsup$ inequalities
produce constants $\Gamma_n$, see \eqref{defGamman}, and
$\Gamma_n^*\geq\Gamma_n$, see \eqref{gamman*}; our final task will
be to show that they both coincide with $\omega_{n-1}$.

\subsection{The $\Gamma-\liminf$ inequality}

Let us define
\begin{equation}\label{defGamman}
\Gamma_n:=\inf\Big\{\liminf_{s\uparrow 1}(1-s)\M{J}^1_s(E_s,Q)\;\Big|\;
\chi_{E_s}\to \chi_H\text{ in }L^1(Q)\Big\}.
\end{equation}
We denote by ${\mathcal C}$ the family of all $n$-cubes in $\R{n}$
$$
{\mathcal C}:=\left\{R(x+rQ):\ x\in\R{n},\,\,r>0,\,\,R\in
SO(n)\right\}.
$$

\begin{lemma}\label{lemmaliminf} Given $s_i\uparrow 1$ and sets $E_i\subset\R{n}$ with $\chi_{E_i}\to \chi_E$
in $L^1_{\loc}(\R{n})$ as $i\to \infty$, one has
\begin{equation}\label{liminf}
\liminf_{i\to\infty}(1-s_i)\M{J}^1_{s_i}(E_i,\Omega)\ge \Gamma_n P(E,\Omega).
\end{equation}
\end{lemma}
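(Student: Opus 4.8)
The plan is to use the blow-up method of Fonseca and M\"uller, reducing the lower bound on $\M{J}^1_{s_i}$ to the model computation that defines $\Gamma_n$. First I would dispose of the trivial case: if $P(E,\Omega)=+\infty$ there is nothing to prove, so assume the limit inferior in \eqref{liminf} is finite and, passing to a subsequence (not relabeled) realizing the liminf, assume $\sup_i (1-s_i)\M{J}^1_{s_i}(E_i,\Omega')<\infty$ for all $\Omega'\Subset\Omega$; by Theorem~\ref{trm1} the limit set $E$ has locally finite perimeter in $\Omega$, so $|D\chi_E|$ is a Radon measure on $\Omega$. Introduce the nonnegative Radon measures
$$
\mu_i:=(1-s_i)\Big(\int_{E_i\cap\,\cdot\,}\int_{E_i^c\cap\Omega}\frac{dy}{|x-y|^{n+s_i}}\,dx+\int_{E_i^c\cap\,\cdot\,}\int_{E_i\cap\Omega}\frac{dy}{|x-y|^{n+s_i}}\,dx\Big)\res\Omega,
$$
so that $\mu_i(\Omega')$ controls $(1-s_i)\M{J}^1_{s_i}(E_i,\Omega')$ up to a factor $2$ and a boundary correction; after a further subsequence $\mu_i\rightharpoonup\mu$ weakly-$*$ for some finite measure $\mu$ on $\Omega$, and it suffices to show $\mu\ge 2\Gamma_n\,|D\chi_E|$ as measures (the factor from the symmetrization).

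The heart of the argument is the Radon–Nikodym differentiation: writing $\mu=f\,|D\chi_E|+\mu^s$ with $f\ge 0$ and $\mu^s\perp|D\chi_E|$, by Besicovitch differentiation for $|D\chi_E|$-a.e.\ $x_0\in\Omega$ the density $f(x_0)=\lim_{r\to0}\mu(B_r(x_0))/|D\chi_E|(B_r(x_0))$ exists and is finite, and we may choose $x_0$ to be a point where the approximate tangent plane to $\partial^*E$ exists with inner normal $\nu=\nu_E(x_0)$, so that the rescalings $E_{x_0,r}:=(E-x_0)/r$ converge in $L^1_{\loc}(\R{n})$ to the halfspace $H_\nu:=\{x:\ x\cdot\nu\le 0\}$ and $|D\chi_E|(B_r(x_0))/\omega_{n-1}r^{n-1}\to1$. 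The goal is then
$$
f(x_0)\ \ge\ 2\Gamma_n.
$$
For this, fix a radius $r$ with $\mu(\partial B_r(x_0))=0$; by lower semicontinuity of mass under weak-$*$ convergence on the open ball, $\mu(B_r(x_0))\le\liminf_i\mu_i(B_r(x_0))$, and $\mu_i(B_r(x_0))\le 2(1-s_i)\M{J}^1_{s_i}(E_i,B_r(x_0))$ plus terms with one variable in $B_r(x_0)$ and the other in $\Omega\setminus B_r(x_0)$ — these latter cross terms are the nonlocal tail, and they must be shown to be negligible after rescaling (this is where the structure $\M{J}^1_s(E,Q)$ with \emph{both} variables in the cube, built into the definition \eqref{defGamman} of $\Gamma_n$, is used). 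Rescaling by \eqref{scale}, $(1-s_i)\M{J}^1_{s_i}(E_i,B_r(x_0))=r^{-(n-1)}(1-s_i)\M{J}^1_{s_i}\big(E_{x_0,r},B_1\big)\cdot r^{n-s_i}/r^{n-s_i}$ — more precisely $(1-s_i)\M{J}^1_{s_i}(E_i,B_r(x_0))=r^{n-s_i}(1-s_i)\M{J}^1_{s_i}(E_{x_0,r},B_1)$ and $r^{n-s_i}/r^{n-1}\to r$; inscribing a cube and using that $\chi_{E_{x_0,r}}\to\chi_{H_\nu}$, a diagonal argument over a sequence $r\downarrow0$ together with the definition of $\Gamma_n$ (which is rotation-invariant, so $H_\nu$ may replace $H$) yields $f(x_0)\ge2\Gamma_n$.

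The main obstacle I anticipate is the control of the nonlocal cross terms: unlike in the classical perimeter case, $\M{J}^1_{s_i}(E_i,B_r(x_0))$ is \emph{not} the restriction of a measure's mass but genuinely couples points of $B_r$ with points of $\Omega\setminus B_r$, so passing from $\mu_i(B_r(x_0))$ to a quantity of the form $2(1-s_i)\M{J}^1_{s_i}(E_i,B_r(x_0))$ requires discarding $(1-s_i)\int_{E_i\cap B_r(x_0)}\int_{E_i^c\cap(\Omega\setminus B_r(x_0))}|x-y|^{-n-s_i}\,dy\,dx$. One has to argue that, after the diagonal choice of radii, this contributes no more than a vanishing fraction of $\omega_{n-1}r^{n-1}$ — e.g.\ by a Fubini/annular decomposition bounding the cross term by $C\int_{B_r(x_0)}\mathrm{dist}(x,\partial B_r(x_0))^{-s_i}\,dx\cdot(1-s_i)$-type estimates, which is $O(r^{n-s_i})$ and hence $o(r^{n-1})$ after dividing — or, more robustly, by choosing at each scale a slightly smaller concentric ball on whose boundary both the coarea-type slicing of $E_i$ and $\mu$ behave well. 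A secondary technical point is ensuring the liminf-realizing subsequence and the weak-$*$ convergent subsequence can be taken simultaneously, which is routine by a diagonal argument since everything reduces to countably many conditions ($\Omega'\Subset\Omega$ exhausting, radii $r$ with $\mu(\partial B_r)=0$).
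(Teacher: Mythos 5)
Your overall plan --- blow up at $\mu$-a.e.\ point of the reduced boundary, rescale by \eqref{scale}, and invoke the defining property of $\Gamma_n$ --- is the right idea and matches the spirit of the paper's proof, but two points in the execution go off track.

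First, the ``main obstacle'' you single out, the cross terms coupling $B_r(x_0)$ with $\Omega\setminus B_r(x_0)$, is not an obstacle at all for this inequality. You are trying to prove a \emph{lower} bound $f(x_0)\ge 2\Gamma_n$, i.e.\ a lower bound on $\mu(B_r(x_0))$ as $r\to 0$. Writing
$\mu_i(B_r)=2(1-s_i)\M{J}^1_{s_i}(E_i,B_r)+(1-s_i)\,\text{cross}_i$
with $\text{cross}_i\ge 0$, you get for free
$\mu_i(B_r)\ge 2(1-s_i)\M{J}^1_{s_i}(E_i,B_r)$,
which is exactly the direction you need; the tail estimate you propose is unnecessary (and would only be relevant for an \emph{upper} bound on $\mu$, which the lemma does not require). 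The genuine issue you do not resolve lies elsewhere: to use the definition \eqref{defGamman} of $\Gamma_n$ you must pass to a cube, and the largest $R_{x_0}$-oriented cube inscribed in $B_r(x_0)$ has side $2r/\sqrt n$. Since $\M{H}^{n-1}$ of its median section is $(2r/\sqrt n)^{n-1}$ while $|D\chi_E|(B_r(x_0))\sim\omega_{n-1}r^{n-1}$, the cube-in-ball step only yields $f(x_0)\ge 2\Gamma_n (2/\sqrt n)^{n-1}/\omega_{n-1}$, which is strictly smaller than $2\Gamma_n$ for every $n\ge 2$. So the Radon--Nikodym-with-balls route, as written, proves a weaker inequality.

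The paper sidesteps both the weak-$*$ compactness machinery and the cube/ball mismatch. It does not turn $\M{J}^1_{s_i}(E_i,\cdot)$ into a measure; it works with the superadditive set function $\alpha_i(C):=(1-s_i)\M{J}^1_{s_i}(E_i,C)$ and $\alpha(C):=\liminf_i\alpha_i(C)$, shows directly that
$\liminf_{r\to 0}\alpha(C_r(x))/\mu(C_r(x))\ge\Gamma_n$
for $\mu$-a.e.\ $x$, where $C_r(x)=x+rR_xQ$ is the oriented cube and \eqref{densitycube} gives $\mu(C_r(x))\sim r^{n-1}$, and then concludes via a Morse--Vitali covering by disjoint cubes together with superadditivity of $\M{J}^1_s$: $\Gamma_n P(E,\Omega)\le(1+\ve)\sum_j\alpha(C_j)\le(1+\ve)\liminf_i(1-s_i)\M{J}^1_{s_i}(E_i,\Omega)$. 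If you wish to keep the Radon--Nikodym framing, you must differentiate with respect to a cube basis adapted to $R_x$ rather than balls (or otherwise reconcile the geometry); as things stand there is a gap in the constant. A minor further remark: your opening line ``if $P(E,\Omega)=+\infty$ there is nothing to prove'' is backwards --- the trivial case is when the \emph{left}-hand side is $+\infty$, which is the reduction you then actually use.
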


We can assume that the left-hand side of \eqref{liminf} is finite,
otherwise the inequality is trivial. Then, passing to the limit as
$i\to\infty$ in \eqref{main} with $s=s_i$ we get
$$
\|\tau_h\chi_E-\chi_E\|_{L^1(\Omega')}\leq C(n)|h|
\liminf_{i\to\infty}(1-s_i)\M{J}^1_{s_i}(E_i,\Omega)
\qquad\forall\Omega'\Subset\Omega
$$
whenever $|h|<\dist(\Omega',\partial\Omega)/2$, hence $E$ has finite
perimeter in $\Omega$.

 We shall
denote by $\mu$ the perimeter measure of $E$, i.e.
$\mu(A)=|D\chi_E|(A)$ for any Borel set $A\subset\Omega$, and we
shall use the following property of sets of finite perimeter: for
$\mu$-a.e. $x\in\Omega$ there exists $R_x\in SO(n)$ such that
$(E-x)/r$ locally converge in measure to $R_xH$ as $r\to 0$.
In addition,
\begin{equation}\label{densitycube}
\lim_{r\to 0}\frac{\mu(x+rR_xQ)}{r^{n-1}}=1,\quad \text{for  $\mu$-a.e. } x.
\end{equation}
Indeed this property holds for every $x\in \M{F} E$, where $\M{F}E$ denotes the reduced boundary of $E$, see Theorem 3.59(b) in \cite{AFP}.

Now, given a cube $C\in{\mathcal C}$ contained in $\Omega$ we set
$$\alpha_i(C):=(1-s_i)\M{J}^1_{s_i}(E_i,C)$$
and
$$\alpha (C):=\liminf_{i\to\infty}\alpha_i(C).$$
We claim that, setting $C_r(x):=x+rR_xQ$, where $R_x$ is as in
\eqref{densitycube}, for $\mu$-a.e. $x$ we have
\begin{equation}\label{eq1}
\liminf_{r\to 0}\frac{\alpha(C_r(x))}{\mu(C_r(x))}\geq \Gamma_n\quad
\text{for $\mu$-a.e. }x\in \R{n}.
\end{equation}
Then observing that for all $\ve>0$ the family
$$\M{A}:=\Big\{C_r(x)\subset\Omega\;:\;(1+\ve)\alpha(C_r(x))\geq \Gamma_n\mu(C_r(x))   \Big\}$$
is a fine covering of $\mu$-almost all of $\Omega$, by a suitable
variant of Vitali's theorem (see \cite{M}) we can extract a
countable subfamily of disjoint cubes
$$\{C_j\subset\Omega:j\in J\}$$ such that
$\mu\big(\Omega\setminus\bigcup\limits_{j\in J} C_j\big)=0$, whence
\begin{equation*}
\begin{split}
\Gamma_n P(E,\Omega)&=\Gamma_n\mu\Big(\bigcup_{j\in J}C_j\Big)=\Gamma_n\sum_{j\in J}\mu(C_j)\\
&\leq (1+\ve)\sum_{j\in J}\alpha(C_j)\le (1+\ve)\liminf_{i\to\infty}\sum_{j\in J}\alpha_i(C_j)\\
&\leq (1+\ve)\liminf_{i\to\infty}(1-s_i)\M{J}^1_{s_i}(E_i,\Omega).
\end{split}
\end{equation*}
In the last inequality we used that $\M{J}^1_s$ is superadditive and
positive for every $s\in (0,1)$. Since $\ve>0$ is arbitrary we get
the $\Gamma-\liminf$ estimate.

We now prove the inequality in \eqref{eq1} at any point $x$ such
that $(E-x)/r$ converges locally in measure as $r\to 0$ to
$R_xH$ and \eqref{densitycube} holds. Because of
\eqref{densitycube}, we need to show that
\begin{equation}\label{eq11}
\liminf_{r\to 0}\frac{\alpha(C_r(x))}{r^{n-1}}\geq \Gamma_n.
\end{equation}
Since from now on $x$ is fixed, we can assume with no loss of
generality (by rotation invariance) that $R_x=I$, so that the limit
hyperplane is $H$ and the cubes $C_r(x)$ are the standard ones
$x+rQ$. Let us choose a sequence $r_k\to 0$ such that
$$\liminf_{r\to 0}\frac{\alpha(C_r(x))}{r^{n-1}}=
\lim_{k\to \infty} \frac{\alpha(C_{r_k}(x))}{r_k^{n-1}}.$$ For $k>0$
we can choose $i(k)$ so large that the following conditions hold:
\begin{equation*}
\left\{
\begin{split}
&\alpha_{i(k)}(C_{r_k}(x))\leq \alpha(C_{r_k}(x))+r_k^n,\\
&r_k^{1-s_{i(k)}}\ge 1-\frac{1}{k},\\
&\Intm_{C_{r_k}(x)}|\chi_{E_{i(k)}}-\chi_E|dx<\frac{1}{k}.
\end{split}
\right.
\end{equation*}
Then we infer
\begin{equation*}
\begin{split}
\frac{\alpha(C_{r_k}(x))}{r_k^{n-1}}&\ge \frac{\alpha_{i(k)}(C_{r_k}(x))}{r_k^{n-1}}-r_k\\
&=\frac{(1-s_{i(k)})\M{J}^1_{s_{i(k)}}((E_{{i(k)}}-x)/r_k,Q)r_k^{n-s_{i(k)}}}{r_k^{n-1}}-r_k\\
&\ge
\Big(1-\frac{1}{k}\Big)(1-s_{i(k)})\M{J}^1_{s_{i(k)}}((E_{{i(k)}}-x)/r_k,Q)-r_k,
\end{split}
\end{equation*}
i.e.
$$\lim_{k\to\infty} \frac{\alpha(C_{r_k}(x))}{r_k^{n-1}}\ge
\liminf_{k\to\infty}(1-s_{i(k)})\M{J}^1_{s_{i(k)}}((E_{{i(k)}}-x)/r_k,Q).$$
On the other hand we have
$$\lim_{k\to\infty}\int_{Q}|\chi_{(E_{{i(k)}}-x)/r_k}-\chi_{(E-x)/r_k}|dx=0,$$
and
$$\lim_{k\to\infty}\int_{Q}|\chi_{(E-x)/r_k}-\chi_H|dx=0.$$
It follows that $(E_{{i(k)}}-x)/r_k\to H$ in $L^1(Q)$. Recalling
the definition of $\Gamma_n$ we conclude the proof of \eqref{eq11}
and of Lemma~\ref{lemmaliminf}.

\subsection{The $\Gamma-\limsup$ inequality}

It is enough to prove the $\Gamma-\limsup$ inequality for a
collection $\M{B}$ of sets of finite perimeter which is dense in
energy, i.e. such that for every set $E$ of finite perimeter there
exists $E_k\in\M{B}$ with $\chi_{E_k}\to \chi_E$ in
$L^1_{\loc}(\R{n})$ as $k\to\infty$ and
$\limsup_kP(E_k,\Omega)=P(E,\Omega)$. Indeed, let $d$ be a distance
inducing the $L^1_{\loc}$ convergence and, for a set $E$ of finite
perimeter, let $E_k$ be as above. Given $s_k\uparrow 1$, we can find
sets $\hat{E}_{k}$ with $d(\chi_{\hat{E}_{k}},\chi_{E_k})<1/k$ and
$$(1-s_k)\M{J}_{s_k}(\hat{E}_{k},\Omega)\leq \Gamma_n^* P(E_k,\Omega)+\frac{1}{k}.$$
Then we have $\chi_{\hat{E}_{k}}\to \chi_E$ in $L^1_{\loc}(\R{n})$
and
$$\limsup_{k\to\infty}(1-s_k)\M{J}_{s_k}(\hat{E}_{k},\Omega)
\leq \limsup_{k\to\infty}\Gamma_n^* P(E_k,\Omega)=\Gamma_n^*
P(E,\Omega).$$

We shall take $\M{B}$ to be the collection of polyhedra $\Pi$ which
satisfy $P(\Pi,\partial\Omega)=0$ (i.e. with faces transversal to
$\partial\Omega$, see Proposition \ref{poli}). Equivalently,
$$\lim_{\delta\to 0}P(\Pi,\Omega^+_\delta\cup \Omega^-_\delta)=0,$$
where
\begin{equation}\label{omegadelta}
\begin{split}
\Omega^+_\delta&:=\{x\in \Omega^c\;|\;d(x,\Omega)<\delta\}\\
\Omega^-_\delta&:=\{x\in \Omega\;|\;d(x,\Omega^c)<\delta\}.
\end{split}
\end{equation}

In fact, we have:

\begin{lemma}\label{poly} For a polyhedron $\Pi\subset\R{n}$ there holds
$$\limsup_{s\uparrow 1}(1-s)\M{J}_s(\Pi,\Omega)\leq
\Gamma_n^* P(\Pi,\Omega)+ 2\Gamma_n^* \lim_{\delta\to 0} P(\Pi,\Omega^+_\delta\cup \Omega^-_\delta),$$
where
\begin{equation}\label{gamman*}
\Gamma_n^*:= \limsup_{s\uparrow 1}(1-s)\M{J}^1_s(H,Q).
\end{equation}
\end{lemma}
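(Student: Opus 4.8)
\noindent\emph{Proof plan.}
The plan is to avoid analysing the boundary terms directly and instead to reduce, up to infinitesimal errors, the whole of $\M{J}_s(\Pi,\Omega)$ to $\M{J}^1_s$ on open sets $B$ slightly larger than $\overline\Omega$, and then let $B\downarrow\overline\Omega$. Writing $f(x,y):=|x-y|^{-n-s}$, one has the elementary identities $\M{J}^1_s(\Pi,\Omega)=\tfrac12\int_\Omega\int_\Omega \mathbf 1_{\{\chi_\Pi(x)\neq\chi_\Pi(y)\}}f$ and $\M{J}^2_s(\Pi,\Omega)=\int_\Omega\int_{\Omega^c}\mathbf 1_{\{\chi_\Pi(x)\neq\chi_\Pi(y)\}}f$; hence, for any bounded open $B\supset\overline\Omega$, retaining in $\M{J}^1_s(\Pi,B)$ only the pairs having a point in $\Omega$ (all discarded terms being nonnegative) gives
\[
\M{J}_s(\Pi,\Omega)\ \le\ \M{J}^1_s(\Pi,B)+\int_\Omega\!\int_{B^c}\!f(x,y)\,dy\,dx\ \le\ \M{J}^1_s(\Pi,B)+|\Omega|\,\frac{n\omega_n}{s}\,d_0^{-s},\qquad d_0:=\dist(\overline\Omega,\de B)>0.
\]
Since $(1-s)d_0^{-s}\to0$, as soon as one knows $\limsup_{s\uparrow1}(1-s)\M{J}^1_s(\Pi,B)\le\Gamma_n^*P(\Pi,B)$ for a cofinal family of such $B$, outer regularity of the Radon measure $P(\Pi,\cdot)$ yields $\limsup_{s\uparrow1}(1-s)\M{J}_s(\Pi,\Omega)\le\Gamma_n^*P(\Pi,\overline\Omega)=\Gamma_n^*\bigl(P(\Pi,\Omega)+P(\Pi,\de\Omega)\bigr)$, and $P(\Pi,\de\Omega)=\lim_{\delta\to0}P(\Pi,\Omega^+_\delta\cup\Omega^-_\delta)$ because $\Omega^+_\delta\cup\Omega^-_\delta\downarrow\de\Omega$ as $\delta\downarrow0$. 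This is even slightly stronger than the assertion of the Lemma (where $2\Gamma_n^*$ appears in place of $\Gamma_n^*$).

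Thus everything reduces to the claim: \emph{for every polyhedron $\Pi$ and every bounded open $V$ with $P(\Pi,\de V)=0$ one has $\limsup_{s\uparrow1}(1-s)\M{J}^1_s(\Pi,V)\le\Gamma_n^*P(\Pi,V)$.} To prove it, fix $\ve>0$, write $\de\Pi=F_1\cup\dots\cup F_M$ with $F_i$ an $(n-1)$-dimensional face contained in a hyperplane $\pi_i$, and let $\M E:=\bigcup_i\de F_i$ be the $(n-2)$-skeleton. Choose $\eta>0$ so small that the $\eta$-neighbourhood $N_\eta$ satisfies $\mathcal H^{n-1}\bigl(\de\Pi\cap(N_\eta(\M E)\cup N_\eta(\de V))\bigr)<\ve$, possible since $\mathcal H^{n-1}(\M E)=0$ and $P(\Pi,\de V)=0$. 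In the integral defining $\M{J}^1_s(\Pi,V)$, the pairs with $|x-y|\ge\eta$ contribute at most $|V|\frac{n\omega_n}{s}\eta^{-s}$, negligible after $(1-s)$. For the pairs with $|x-y|<\eta$ one has $\chi_\Pi(x)\neq\chi_\Pi(y)$, hence $\dist(x,\de\Pi)<\eta$; splitting according to whether the nearest point of $\de\Pi$ to $x$ lies in $N_\eta(\M E)\cup N_\eta(\de V)$ or not, the first part is bounded — using $|x-y|\ge\dist(x,\de\Pi)$ — by $\tfrac{n\omega_n}{s}\int\dist(x,\de\Pi)^{-s}\,dx$ over a thin tube around the ``bad'' part of $\de\Pi$, which after $(1-s)$ tends to a quantity $\le C\ve$; in the remaining ``flat'' part $\Pi$ coincides locally with the halfspace $H_i$ bounded by $\pi_i$, so this part is dominated by $\sum_i\M{J}^1_s(H_i,S_i)$, where $S_i$ is a slab of half-width $O(\eta)$ around the part of $F_i$ lying well inside $V$.

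It remains to treat the halfspace in a thin slab: if $S=B'\times(-\rho,\rho)$ in coordinates with $\de H=\{x_n=0\}$, then $\limsup_{s\uparrow1}(1-s)\M{J}^1_s(H,S)\le\Gamma_n^*\,\mathcal H^{n-1}(B')+C\rho\,\mathcal H^{n-2}(\de B')$. This is where the definition of $\Gamma_n^*$ and the scaling property \eqref{scale} enter: one covers $S$ by $\approx\mathcal H^{n-1}(B')/r^{n-1}$ essentially disjoint cubes of side $r>2\rho$, all centred on $\{x_n=0\}$ with a pair of faces parallel to it, so that by invariance under rigid motions and \eqref{scale} each contributes $\M{J}^1_s(H,\text{cube})=r^{n-s}\M{J}^1_s(H,Q)$; summing and using $(1-s)\M{J}^1_s(H,Q)\le\Gamma_n^*+o(1)$ gives the leading term, after which $r$ is let decrease to $2\rho$. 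The cross terms between distinct cubes are harmless: for cubes at distance $\ge r$ the kernel is bounded and their total contribution is $O(r^{1-s})$, vanishing after $(1-s)$; for adjacent cubes the nonzero interaction concentrates on the $(n-2)$-dimensional set (shared face)$\,\cap\{x_n=0\}$, so a single such interaction is bounded uniformly in $s$ (again by \eqref{scale}), and the sum over the $O(\mathcal H^{n-1}(B')/r^{n-1})$ adjacent pairs is once more $O(r^{1-s})\to0$. Feeding this back into the previous step, $\limsup_{s\uparrow1}(1-s)\M{J}^1_s(\Pi,V)\le\Gamma_n^*P(\Pi,V)+C\ve$, and $\ve\to0$ finishes the claim, hence the Lemma.

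The main obstacles I expect are in the last two paragraphs: first, the geometric bookkeeping of localising the integral near $\de\Pi$ while simultaneously keeping the cubes inside $V$ and at controlled distance from both $\M E$ and $\de V$; and above all, the verification that \emph{every} cross term between cubes is $o\bigl(1/(1-s)\bigr)$, so that no spurious multiple of $\Gamma_n^*$ survives and the sharp constant is obtained. The reduction step and the control of the ``far'' and ``near-skeleton'' tails are, by contrast, routine.
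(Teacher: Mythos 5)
Your strategy is correct and would work out, but it takes a genuinely different route from the paper. For the boundary contribution $\M{J}^2_s$, you absorb all of $\M{J}_s(\Pi,\Omega)$ into $\M{J}^1_s(\Pi,B)$ with $B\supset\overline\Omega$, up to a negligible far-field term, and then pass to the limit $B\downarrow\overline\Omega$ by outer regularity of the Radon measure $P(\Pi,\cdot)$; the paper instead bounds $\M{J}^2_s(\Pi,\Omega)$ directly by a negligible term plus $2\,\M{J}^1_s\bigl(\Pi,\Omega^-_\delta\cup\Omega^+_\delta\bigr)$ and re-applies the Step~1 estimate on the thin collar. Your route is arguably cleaner, and, as you observe, yields the slightly sharper coefficient $\Gamma_n^*$ rather than $2\Gamma_n^*$ on $P(\Pi,\partial\Omega)$. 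For the core estimate on $\M{J}^1_s$, you excise the $(n-2)$-skeleton and reduce to a halfspace in a thin slab, which you then cover by $r$-cubes and must control cross-terms between neighboring cubes. The paper instead uses a two-scale trick that avoids cross-terms altogether: it places a family of \emph{disjoint} $\ve$-cubes $Q_i^\ve$ aligned with the faces and splits the integral at $|x-y|<\ve^2$ versus $\geq\ve^2$; the far part is negligible after multiplying by $(1-s)$, and for the near part each $x\in Q_i^\ve$ sees $y$ only inside the single dilated cube $(1+\ve)Q_i^\ve$, so the contribution is exactly $\sum_i\M{J}^1_s\bigl(H,(1+\ve)Q_i^\ve\bigr)=N_\ve(\ve+\ve^2)^{n-s}\M{J}^1_s(H,Q)$ by scaling, with no interaction between cubes and no double counting beyond the $O(\ve)$ overlap of the dilations. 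You correctly flag the cross-terms as the main risk in your version; they do vanish as you claim (for two face-adjacent $r$-cubes the corner integral has a codimension-$2$ singular set and hence is $O(r^{n-s})$ with an $s$-uniform constant, and there are $O(r^{-(n-1)})$ such pairs, giving $O(r^{1-s})$ overall, which dies after multiplying by $(1-s)$; far pairs are even smaller), so the argument can be made rigorous, but the $\ve$-vs-$\ve^2$ cut-off in the paper is noticeably leaner in the bookkeeping.
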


\begin{proof} \emph{Step 1.} We first estimate $\M{J}_s^1(\Pi,\Omega)$. For a fixed $\ve>0$ set
\begin{equation*}
(\de\Pi)_\ve:=\{x\in\Omega\;|\; d(x,\de\Pi)<\ve\},\quad (\de\Pi)_\ve^-:=(\de\Pi)_\ve \cap \Pi.
\end{equation*}
We can find $N_\ve$ disjoint cubes $Q^\ve_i\subset\Omega$, $1\leq
i\le N_\ve$, of side length $\ve$ satisfying the following
properties:
\begin{itemize}
\item[(i)]
if $\tilde Q_i^\ve$ denotes the dilation of $Q_i^\ve$ by a factor
$(1+\ve)$, then each cube $\tilde Q_i^\ve$ intersects exactly one
face $\Sigma$ of $\de\Pi$, its barycenter belongs to $\Sigma$ and
each of its sides is either parallel or orthogonal to $\Sigma$;
\item[(ii)]  $\M{H}^{n-1}\left(((\de\Pi)\cap \Omega)\setminus \bigcup_{i=1}^{N_\ve}Q_i^\ve
\right)=|P(\Pi,\Omega)-N_\ve\ve^{n-1} |\to 0$ as $\ve \to 0$.
\end{itemize}
For $x\in\R{n}$ set
$$I_s(x):=\int_{\Pi^c\cap \Omega}\frac{dy}{|x-y|^{n+s}}.$$
We consider several cases.

\medskip

\noindent\emph{Case 1:} $x\in (\Pi\cap \Omega)\setminus(\de\Pi)_\ve^-$. Then for $y\in \Pi^c\cap \Omega$ we have $|x-y|\geq \ve$, hence
$$I_s(x)\leq \int_{(B_\ve(x))^c} \frac{1}{|x-y|^{n+s}}dy=n\omega_n\int_\ve^\infty \frac{1}{\rho^{s+1}}d\rho=\frac{n\omega_{n}}{s\ve^s},$$
since $n\omega_n=\M{H}^{n-1}(S^{n-1})$. Therefore
\begin{equation}\label{case1}
\int_{(\Pi\cap \Omega)\setminus(\de\Pi)_\ve^-}I_s(x)dx\leq \frac{n\omega_{n}\M{L}^n(\Pi\cap \Omega)}{s\ve^s}.
\end{equation}

\noindent\emph{Case 2:} $x\in
(\de\Pi)_\ve^-\setminus\bigcup_{i=1}^{N_\ve}Q_i^\ve$. Then
\begin{equation}\label{case2.0}
I_s(x)\leq\int_{(B_{d(x,\Pi^c \cap
\Omega)}(x))^c}\frac{1}{|x-y|^{n+s}}dy=
n\omega_{n}\int_{d(x,\Pi^c\cap\Omega)}^\infty\frac{1}{\rho^{s-1}}d\rho=\frac{n\omega_{n}}{s[d(x,\Pi^c\cap\Omega)]^s}.
\end{equation}
Now write $(\de\Pi)\cap \Omega=\bigcup_{j=1}^J\Sigma_j$, where each $\Sigma_j$ is the intersection of a face of $\de\Pi$ with $\Omega$, and define
$$(\de\Pi)^-_{\ve,j}:=\{x\in (\de\Pi)^-_\ve:\dist(x,\Pi^c\cap\Omega)=\dist(x,\Sigma_j)\}.$$
Clearly $(\de\Pi)^-_\ve=\bigcup_{j=1}^J (\de\Pi)^-_{\ve,j}$. Moreover we have
$$(\de\Pi)^-_{\ve,j}\subset\{x+t\nu:x\in \Sigma_{\ve,j},\, t\in (0,\ve),\, \nu\text{ is the interior unit normal to }\Sigma_{\ve,j}\}, $$
and $\Sigma_{\ve,j}$ is the set of points $x$ belonging to the same hyperplane as $\Sigma_j$ and with $\dist(x,\Sigma_j)\le \ve$. Clearly $\M{H}^{n-1}(\Sigma_{\ve,j})\le \M{H}^{n-1}(\Sigma_j)+C\ve$ as $\ve\to 0$. Then from \eqref{case2.0} we infer
\begin{equation}\label{case2}
\begin{split}
\int_{(\de\Pi)_\ve^-\setminus\bigcup_{i=1}^{N_\ve}Q_i^\ve}I_s(x)dx&\leq \frac{n\omega_{n}}{s}\sum_{j=1}^J\int_{(\de\Pi)_{\ve,j}^-\setminus\bigcup_{i=1}^{N_\ve}Q_i^\ve}\frac{1}{[d(x,\Pi^c)]^s}dx\\
&\leq \frac{n\omega_{n}}{s}\sum_{j=1}^J\int_{(\de\Pi)_{\ve,j}^-\setminus\bigcup_{i=1}^{N_\ve}Q_i^\ve}\frac{1}{[d(x,\Sigma_{\ve,j})]^s}dx\\
&\leq\frac{n\omega_{n}}{s}\sum_{j=1}^J\int_{(\Sigma_{\ve,j})\setminus\bigcup_{i=1}^{N_\ve}Q_i^\ve}\bigg(\int_0^\ve \frac{dt}{t^s}\bigg)d\M{H}^{n-1}\\
&= \frac{n\omega_{n}\ve^{1-s}}{s(1-s)}\M{H}^{n-1}\left(\bigg(\bigcup_{j=1}^J\Sigma_{\ve,j}\bigg)\setminus\bigcup_{i=1}^{N_\ve}Q_i^\ve\right)=\frac{\ve^{1-s}o(1)}{s(1-s)},
\end{split}
\end{equation}
with error $o(1)\to 0$ as $\ve\to 0$ and independent of $s$.

\medskip

\noindent\emph{Case 3:} $x\in \Pi\cap \bigcup_{i=1}^{N_\ve}Q_i^\ve$. In this case we write
\begin{equation*}
\begin{split}
I_s(x)&=\int_{(\Pi^c\cap \Omega)\cap \{y:|x-y|\geq \ve^2\}}\frac{dy}{|x-y|^{n+s}} +\int_{(\Pi^c\cap \Omega)\cap \{y:|x-y|< \ve^2\}}\frac{dy}{|x-y|^{n+s}}\\
&=:I_s^1(x)+I_s^2(x).
\end{split}
\end{equation*}
Then, similar to the case 1,
$$I_s^1(x)\leq n\omega_{n}\int_{\ve^2}^\infty\frac{1}{\rho^{s+1}}d\rho=\frac{n\omega_{n}}{s\ve^{2s}},$$
hence (since all cubes are contained in $\Omega$)
\begin{equation}\label{case3.1}
\int_{\Pi\cap \bigcup_{i=1}^{N_\ve}Q_i^\ve}I_s^1(x)dx\leq
\frac{\M{L}^n(\Omega)n\omega_{n}}{s\ve^{2s}}.
\end{equation}
As for $I_s^2(x)$ observe that if $x\in Q_i^\ve$ and $|x-y|\leq
\ve^2$, then $y\in \tilde Q_i^\ve$, where $\tilde Q_i^\ve$ is the
cube obtained by dilating $Q_i^\ve$ by a factor $1+\ve$ (hence the
side length of $\tilde Q_i^\ve$ is $\ve+\ve^2$). Then
\begin{equation}\label{case3.2}
\begin{split}
\int_{\Pi\cap \bigcup_{i=1}^{N_\ve}Q_i^\ve}I_s^2(x)dx&\le\sum_{i=1}^{N_\ve}\int_{\Pi\cap Q_i^\ve}\int_{\Pi^c\cap \tilde Q_i^\ve}\frac{1}{|x-y|^{n+s}}dydx \le\sum_{i=1}^{N_\ve}\int_{\Pi\cap \tilde Q_i^\ve}\int_{\Pi^c\cap \tilde Q_i^\ve}\frac{1}{|x-y|^{n+s}}dydx\\
&=N_\ve \M{J}^1_s(H,(\ve+\ve^2) Q) = N_\ve(\ve+\ve^2)^{n-s}\M{J}^1_s(H,Q),
\end{split}
\end{equation}
where in the last identity we used the scaling property \eqref{scale}.
Keeping $\ve>0$ fixed, letting $s$ go to $1$ and putting \eqref{case1}-\eqref{case3.2} together we infer
\begin{equation*}
\begin{split}
\limsup_{s\uparrow 1}(1-s)\M{J}^1_s(\Pi,\Omega)& \leq o(1)+\limsup_{s\uparrow 1}(1-s)\M{J}^1_s(H,Q)N_\ve(\ve+\ve^2)^{n-1}\\
&=o(1)+\Gamma_n^*P(\Pi,\Omega),
\end{split}
\end{equation*}
with error $o(1)\to 0$ as $\ve\to 0$ uniformly in $s$. Since $\ve>0$
is arbitrary, we conclude
$$\limsup_{s\uparrow 1}(1-s)\M{J}^1_s(\Pi,\Omega) \leq \Gamma_n^*P(\Pi,\Omega).$$

\medskip

\noindent\emph{Step 2.} It now remains to estimate $\M{J}_s^2$. Let us start by considering the term
$$\int_{\Pi\cap \Omega}\int_{\Pi^c\cap \Omega^c}\frac{1}{|x-y|^{n+s}}dydx.$$

\noindent\emph{Case 1:} $x\in \Pi\cap (\Omega\setminus\Omega_\delta^-)$. Then for
$y\in \Pi^c\cap\Omega^c$ we have $|x-y|\ge \delta$, whence
$$I(x):=\int_{\Pi^c\cap \Omega^c}\frac{dy}{|x-y|^{n+s}}\leq
n\omega_{n}\int_\delta^\infty\frac{d\rho}{\rho^{1+s}}=\frac{n\omega_{n}}{s\delta^s}.$$

\noindent\emph{Case 2:} $x\in \Pi\cap \Omega_\delta^-$. In this
case, using the same argument of case $1$ for $y\in
\Pi^c\cap(\Omega^c\setminus\Omega_\delta^+)$, we have
\begin{equation*}
\begin{split}
I(x)&=\int_{\Pi^c\cap\Omega_\delta^+}\frac{dy}{|x-y|^{n+s}}+
\int_{\Pi^c\cap(\Omega^c\setminus\Omega_\delta^+)}\frac{dy}{|x-y|^{n+s}}\\
&\le
\int_{\Pi^c\cap\Omega_\delta^+}\frac{dy}{|x-y|^{n+s}}+\frac{n\omega_{n}}{s\delta^s}.
\end{split}
\end{equation*}
Therefore
\begin{equation*}
\begin{split}
\int_{\Pi\cap \Omega}\int_{\Pi^c\cap \Omega^c}\frac{dydx}
{|x-y|^{n+s}}&\le\frac{2n\omega_{n}|\Omega|}{s\delta^s}+
\int_{\Pi\cap\Omega_\delta^-}\int_{\Pi^c\cap\Omega_\delta^+}\frac{dydx}{|x-y|^{n+s}}\\
&\le\frac{2n\omega_{n}|\Omega|}{s\delta^s}+\int_{\Pi\cap(\Omega_\delta^-\cup \Omega_\delta^+)}
\int_{\Pi^c\cap(\Omega_\delta^-\cup \Omega_\delta^+)}\frac{dydx}{|x-y|^{n+s}}.
\end{split}
\end{equation*}
An obvious similar estimate can be obtained by swapping $\Pi$ and
$\Pi^c$, finally yielding
\begin{equation*}
\begin{split}
\M{J}_s^2(\Pi,\Omega)&\le \frac{4n\omega_{n}|\Omega|}{s\delta^s}+2 \int_{\Pi\cap(\Omega_\delta^-\cup \Omega_\delta^+)}\int_{\Pi^c\cap(\Omega_\delta^-\cup \Omega_\delta^+)}\frac{dydx}{|x-y|^{n+s}}\\
&=
\frac{4n\omega_{n}|\Omega|}{s\delta^s}+2\M{J}_s^1(\Pi,\Omega_\delta^-\cup
\Omega_\delta^+).
\end{split}
\end{equation*}
Using the result of step 1 we get
$$\limsup_{s\uparrow 1}(1-s)\M{J}_s^2(\Pi,\Omega)\le 2\Gamma_n^*P(\Pi,\Omega_\delta^-\cup \Omega_\delta^+).$$
Since $\delta>0$ is arbitrary, letting $\delta$ go to zero we conclude the proof of the lemma.

\end{proof}

\begin{lemma}[Characterization of $\Gamma_n^*$]
\label{lemma*} The limsup in \eqref{gamman*} is a limit and $\Gamma_n^*=\omega_{n-1}.$
\end{lemma}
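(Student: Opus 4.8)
The plan is to evaluate $\lim_{s\uparrow1}(1-s)\M{J}^1_s(H,Q)$ by an explicit computation and show it equals $\omega_{n-1}$; since this turns out to be a genuine limit, not merely a $\limsup$, both assertions of the lemma follow at once. For the clean geometry of $H$ and $Q$ the computation can be carried out by hand.

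The first step is to reduce the double integral to a weighted integral over $\R{n-1}\times(0,1)$. Writing $x=(x',x_n)$, $y=(y',y_n)$, we have, up to null sets, $H\cap Q=Q'\times(-\tfrac12,0)$ and $H^c\cap Q=Q'\times(0,\tfrac12)$ with $Q'=(-\tfrac12,\tfrac12)^{n-1}$. Substituting $t:=-x_n$, $\tau:=y_n$ and using the elementary Fubini identities
$$\int_{Q'}\int_{Q'}f(x'-y')\,dx'\,dy'=\int_{\R{n-1}}f(w)\prod_{j=1}^{n-1}(1-|w_j|)_+\,dw,$$
$$\int_0^{1/2}\int_0^{1/2}g(t+\tau)\,dt\,d\tau=\int_0^1 g(\sigma)\,\ell(\sigma)\,d\sigma,$$
valid for nonnegative Borel integrands, where $\ell(\sigma):=\min(\sigma,1-\sigma)$, one obtains
$$\M{J}^1_s(H,Q)=\int_{\R{n-1}}\int_0^1\Big(\prod_{j=1}^{n-1}(1-|w_j|)_+\Big)\,\ell(\sigma)\,\frac{d\sigma\,dw}{(|w|^2+\sigma^2)^{(n+s)/2}}.$$

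The heart of the argument is a polar-coordinate computation in $\R n$. Writing $(w,\sigma)=R\theta$ with $\theta$ in the upper half-sphere $S^{n-1}_+:=\{\theta\in S^{n-1}:\theta_n>0\}$, one has $\sigma=R\theta_n$ and $dw\,d\sigma=R^{n-1}\,dR\,d\M{H}^{n-1}(\theta)$, so the weight $\sigma\,(|w|^2+\sigma^2)^{-(n+s)/2}$ together with the Jacobian becomes $\theta_n R^{-s}$; the identity that produces the constant is
$$\int_{S^{n-1}_+}\theta_n\,d\M{H}^{n-1}(\theta)=\omega_{n-1},$$
which follows by parametrizing $S^{n-1}_+$ as the graph $\sigma=\sqrt{1-|w|^2}$ over the unit ball of $\R{n-1}$, since the surface element $(1-|w|^2)^{-1/2}\,dw$ exactly cancels the factor $\theta_n=\sqrt{1-|w|^2}$ (equivalently, it equals $\int_{\{|w|<1\}}\diver e_n\,dw$ by the divergence theorem). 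With this I would sandwich $\M{J}^1_s(H,Q)$: for the upper bound use $\prod_j(1-|w_j|)_+\le1$ and $\ell(\sigma)\le\sigma$ and enlarge the domain to the half-ball $\{(w,\sigma):|(w,\sigma)|<\sqrt n,\ \sigma>0\}$, which gives
$$(1-s)\M{J}^1_s(H,Q)\le\omega_{n-1}(1-s)\int_0^{\sqrt n}R^{-s}\,dR=\omega_{n-1}\,n^{(1-s)/2};$$
for the lower bound restrict to $\{|(w,\sigma)|<\delta,\ \sigma>0\}$ with $\delta<\tfrac12$, where $\prod_j(1-|w_j|)_+\ge(1-\delta)^{n-1}$ and $\ell(\sigma)=\sigma$, which gives
$$(1-s)\M{J}^1_s(H,Q)\ge(1-\delta)^{n-1}\omega_{n-1}(1-s)\int_0^\delta R^{-s}\,dR=(1-\delta)^{n-1}\omega_{n-1}\,\delta^{1-s}.$$
Letting $s\uparrow1$ in both inequalities and then $\delta\downarrow0$ in the second yields $\limsup_{s\uparrow1}(1-s)\M{J}^1_s(H,Q)\le\omega_{n-1}\le\liminf_{s\uparrow1}(1-s)\M{J}^1_s(H,Q)$, so the limit exists and equals $\omega_{n-1}$.

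I do not expect a genuine obstacle. The points requiring care are the justification of the two changes of variables and, above all, the hemisphere identity, which is what pins down the constant $\omega_{n-1}$; note that the ``far'' part of the domain --- where $\ell(\sigma)<\sigma$ or $|w|$ is of order one --- is automatically absorbed by the sandwich, since it contributes a quantity bounded uniformly in $s$ and hence negligible after multiplication by $(1-s)$. The case $n=1$ is included, with the conventions $Q'=\{0\}$, empty product equal to $1$, $S^0_+=\{1\}$ and $\omega_0=1$.
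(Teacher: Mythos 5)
Your proof is correct, and it takes a genuinely different route from the paper's. The paper fixes $x\in Q\cap H$ and computes the inner integral $I_s(x)=\int_{Q\cap H^c}|x-y|^{-n-s}\,dy$ by rescaling the transverse variable $z'=(y'-x')/|y_n-x_n|$; this decouples the integral into a radial factor $\int_0^a|x_n-y_n|^{-s-1}\,dy_n$ and an $(n-1)$-dimensional factor, and the constant $\omega_{n-1}$ emerges from the identity $(n-1)\omega_{n-1}\int_0^\infty\rho^{n-2}(1+\rho^2)^{-(n+1)/2}\,d\rho=\omega_{n-1}$, with upper and lower sandwich bounds obtained by enlarging/restricting the transverse domain. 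You instead pass first to the difference variable $(w,\sigma)=(x'-y',\,y_n-x_n)$ using the two marginal convolution identities (the ``tent'' weight $\prod_j(1-|w_j|)_+$ and $\ell(\sigma)=\min(\sigma,1-\sigma)$), reducing $\M{J}^1_s(H,Q)$ to a single weighted integral over $\R{n-1}\times(0,1)$, and then use full $n$-dimensional polar coordinates; the constant comes out of the hemisphere identity $\int_{S^{n-1}_+}\theta_n\,d\M{H}^{n-1}=\omega_{n-1}$. The two computations are of course cousins, but yours is more symmetric in $x$ and $y$, makes the scale-invariance of the kernel transparent (the weight $\theta_n R^{-s}$ cleanly isolates the singular radial integral $\int R^{-s}\,dR$), and identifies the constant through a single geometric fact rather than a Beta-type integral. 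The paper proves the result for the whole family of slabs $Q_a$ (used nowhere else, so that generality is not essential), and it handles $n=1$ by a separate explicit computation, which you fold in via the conventions you state. Both proofs establish that the limit exists, not just the limsup, as required. The steps that need care in your version — the two convolution/Fubini identities, the polar change of variables, and the graph parametrization of the hemisphere (or the divergence-theorem alternative) — are all standard, and your upper-bound domain inclusion $(-1,1)^{n-1}\times(0,1)\subset\{|(w,\sigma)|<\sqrt{n}\}$ is correct since $|w|^2+\sigma^2<(n-1)+1=n$.
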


\begin{proof} The proof is inspired from \cite[Lemma 11]{CV}. We shall actually prove a slightly
stronger statement. Set for $a>0$
$$Q_a:=\{x: |x_i|\leq 1/2 \text{ for }1\le i\le n-1,\; |x_n|\le a \}.$$
Then we show that
$$\lim_{s\uparrow 1}(1-s)\M{J}_s^1(H,Q_a)=\omega_{n-1},\qquad\forall a>0.$$
Let us first consider the case $n\ge 2$.
Fix $x\in Q_a\cap H$ and write as usual $x=(x',x_n)$, $y=(y',y_n)$.
We consider
\begin{equation*}
I_s(x):=\int_{Q_a\cap H^c}\frac{1}{|x-y|^{n+s}}dy=
\int_0^a\int_{Q_a\cap \de H}\frac{1}{|x-y|^{n+s}}dy'dy_n.
\end{equation*}
With the change of variable $z'={(y'-x')}/{|y_n-x_n|}$ and setting
$$ \Sigma(x, y_n)
:=\left\{z'\in \R{n-1}:\left|z'_i +\frac{x'_i}{|x_n-y_n|}\right|\le
\frac{1}{2|x_n-y_n|}\text{ for }1\le i\le n-1\right\},$$ we get
\begin{equation}\label{conto}
\begin{split}
I_s(x)&=\int_0^a\int_{\Sigma(x,y_n)}\frac{1}{|x_n-y_n|^{s+1}(1+|z'|^2)^{(n+s)/2}}dz'dy_n\\
&\le \int_{0}^a\frac{1}{|x_n-y_n|^{s+1}}dy_n\cdot\int_{\R{n-1}}\frac{1}{(1+|z'|^2)^{(n+s)/2}}dz'\\
&=\frac{(-x_n)^{-s}-(a-x_n)^{-s}}{s} \cdot (n-1)\omega_{n-1}\int_0^\infty\frac{\rho^{n-2}}{(1+\rho^2)^{(n+s)/2}}d\rho.
\end{split}
\end{equation}
Now integrating $I$ with respect to $x$, observing that $\M{H}^{n-1}(Q_a\cap\de H)=1$ and that
by dominated convergence one has
\begin{equation}\label{integrale}
\begin{split}
\lim_{s\uparrow 1} \int_0^\infty\frac{\rho^{n-2}}{(1+\rho^2)^{(n+s)/2}}d\rho
&=\int_0^\infty\frac{\rho^{n-2}}{(1+\rho^2)^{(n+1)/2}}d\rho\\
&=\bigg[\frac{\rho^{n-1}}{(n-1)(1+\rho^2)^{(n-1)/2}}\bigg]_0^\infty=\frac{1}{n-1},
\end{split}
\end{equation}
we get
\begin{equation*}
\begin{split}
\int_{H\cap Q_a}I_s(x)dx&\le \M{H}^{n-1}(Q_a\cap \de H)\sup_{x'\in Q_a\cap \de H}\int_{-a}^0 I_s(x',x_n)dx_n\\
&\le  \omega_{n-1}(1+o(1))\int_{-a}^0\frac{(-x_n)^{-s}-(a-x_n)^{-s}}{s}dx_n\\
&=\frac{\omega_{n-1}(1+o(1))a^{1-s}(2-2^{1-s})}{s(1-s)},
\end{split}
\end{equation*}
with error $o(1)\to 0$ as $s\uparrow 1$ dependent only on $s$. Therefore
\begin{equation}\label{limsup}
\limsup_{s\uparrow 1}(1-s)\M{J}_s^1(H,Q_a)=\limsup_{s\uparrow 1}(1-s)\int_{H\cap Q_a}I_s(x)dx\le \omega_{n-1}.
\end{equation}
Now observing that for $\ve$ small enough
\begin{equation}\label{xy}
|x_n|\le \ve^2,\quad |y_n|\le \ve^2,\quad  |x_i|\leq \frac{1}{2}-\ve\text{ for }1\leq i\leq n-1
\end{equation}
implies that $ B_{1/(2\ve)}(0)\subset \Sigma(x,y_n),$ similar to
\eqref{conto} we estimate
\begin{equation*}
\begin{split}
I_s(x)&\ge \int_0^{\ve^2}\int_{Q\cap \de H}\frac{1}{|x-y|^{n+s}}dy'dy_n\\
&\geq\int_0^{\ve^2}\int_{B_{1/(2\ve)}(0)}\frac{1}{|x_n-y_n|^{s+1}(1+|z'|^2)^{(n+s)/2}}dz'dy_n\\
&=\frac{(-x_n)^{-s}-(\ve^2-x_n)^{-s}}{s}\cdot (n-1)\omega_{n-1}\int_0^\frac{1}{2\ve}\frac{\rho^{n-2}}{(1+\rho^2)^{(n+s)/2}}d\rho,
\end{split}
\end{equation*}
whenever $x$ is as in \eqref{xy}. Integrating with respect to $x$ satisfying \eqref{xy} one has
\begin{equation*}
\begin{split}
\int_{H\cap Q_a}I_s(x)dx&\ge (1-2\ve)^{n-1}\int_{-\ve^2}^0\frac{(-x_n)^{-s}-(\ve^2-x_n)^{-s}}{s}dx_n\\
&\quad \times (n-1)\omega_{n-1}\int_0^\frac{1}{2\ve}\frac{\rho^{n-2}}{(1+\rho^2)^{(n+s)/2}}d\rho\\
&=\frac{(n-1)\omega_{n-1}(1-2\ve)^{n-1}
\ve^{2(1-s)}(2-2^{1-s})}{s(1-s)}\int_0^\frac{1}{2\ve}\frac{\rho^{n-2}}{(1+\rho^2)^{(n+s)/2}}d\rho.
\end{split}
\end{equation*}
Letting first $s\uparrow 1$  and then $\ve\to 0$ and using
\eqref{integrale} again we conclude
$$\liminf_{s\uparrow 1}(1-s)\M{J}_s^1(H,Q_a)\ge \omega_{n-1},$$
which together with \eqref{limsup} completes the proof when $n\ge 2$.

When $n=1$ one computes explicitly
$$\M{J}^1_s(H,Q_a)=\int_{-a}^0\int_0^a \frac{1}{|x-y|^{1+s}}dydx=\int_{-a}^0\frac{(-x)^{-s}-(a-x)^{-s}}{s}dx=\frac{a^{1-s}(2-2^{1-s})}{s(1-s)},$$
hence
$$\lim_{s\uparrow 1}(1-s)\M{J}^1_s(H,Q_a)=1=\omega_0.$$
\end{proof}

\subsection{Gluing construction and characterization of the geometric constants}

A key observation in \cite{V}, which we shall need, is that $\M{F}$ satisfies a
generalized coarea formula, namely
$\M{F}_s(u,\Omega)=\int_0^1\M{F}_s(\chi_{\{u>t\}},\Omega)\,dt$; we
reproduce here the simple proof of this fact and we state the result
in terms of $\M{J}_s$.

\begin{lemma}[Coarea formula]\label{coarea} For every measurable function $u:\Omega\to [0,1]$ we have
$$\frac12 \M{F}_s(u,\Omega)=\int_0^1\M{J}_s^1 (\{u>t\},\Omega)dt.$$
\end{lemma}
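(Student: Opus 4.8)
The plan is to deduce the identity from the elementary layer-cake formula
\[
|u(x)-u(y)|=\int_0^1\bigl|\chi_{\{u>t\}}(x)-\chi_{\{u>t\}}(y)\bigr|\,dt\qquad\text{for all }x,y\in\Omega,
\]
which holds because $u$ takes values in $[0,1]$: assuming without loss of generality $u(x)\le u(y)$, the integrand on the right equals $1$ exactly when $u(x)\le t<u(y)$, a set of length $u(y)-u(x)=|u(x)-u(y)|$, and equals $0$ otherwise.

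First I would substitute this identity into the definition \eqref{F} of $\M{F}_s(u,\Omega)$ and apply Tonelli's theorem to move the $t$-integration to the outside. This is legitimate since the integrand is nonnegative and the map $(x,y,t)\mapsto|\chi_{\{u>t\}}(x)-\chi_{\{u>t\}}(y)|\,|x-y|^{-n-s}$ is jointly measurable on $\Omega\times\Omega\times(0,1)$ — indeed the set $\{(x,t):u(x)>t\}$ is measurable because $u$ is. This yields
\[
\M{F}_s(u,\Omega)=\int_0^1\Bigl(\int_\Omega\int_\Omega\frac{|\chi_{\{u>t\}}(x)-\chi_{\{u>t\}}(y)|}{|x-y|^{n+s}}\,dx\,dy\Bigr)dt=\int_0^1\M{F}_s(\chi_{\{u>t\}},\Omega)\,dt.
\]
Then I would invoke the relation $\M{F}_s(\chi_E,\Omega)=2\M{J}^1_s(E,\Omega)$ recorded just before Proposition~\ref{pmazya}, applied with $E=\{u>t\}$ for each $t\in(0,1)$, to rewrite the right-hand side as $2\int_0^1\M{J}^1_s(\{u>t\},\Omega)\,dt$, which is the claim after dividing by $2$.

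There is essentially no obstacle in this argument. The only point deserving a line of justification is the joint measurability needed to apply Tonelli; note also that no integrability hypothesis on $u$ is required, since every quantity involved takes values in $[0,+\infty]$ and the whole chain of equalities holds in $[0,+\infty]$, and that the endpoint values $t=0,1$ play no role as we integrate over an interval.
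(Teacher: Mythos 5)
Your proof is correct and follows essentially the same route as the paper: layer-cake formula for $|u(x)-u(y)|$, Fubini/Tonelli, and the identity $\M{F}_s(\chi_E,\Omega)=2\M{J}^1_s(E,\Omega)$. The only cosmetic difference is that the paper re-derives this last identity inline by expanding $|\chi_{\{u>t\}}(x)-\chi_{\{u>t\}}(y)|$ as a sum of two products of characteristic functions, while you simply cite it as already recorded; both are fine.
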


\begin{proof} Given $x,y\in\Omega$, the function $t\mapsto\chi_{\{u>t\}}(x)-\chi_{\{u>t\}}(y)$
takes its values in $\{-1,0,1\}$ and it is nonzero precisely in the
interval having $u(x)$ and $u(y)$ as extreme points, hence
$$|u(x)-u(y)|=\int_0^1 |\chi_{\{u>t\}}(x)-\chi_{\{u>t\}}(y)|dt.$$
Substituting into \eqref{F}, using Fubini's theorem and observing that
$$|\chi_{\{u>t\}}(x)-\chi_{\{u>t\}}(y)|=
\chi_{\{u>t\}}(x)\chi_{\Omega\setminus\{u>t\}}(y)+
\chi_{\Omega\setminus\{u>t\}}(x)\chi_{\{u>t\}}(y),$$
we infer
\begin{equation*}
\begin{split}
\M{F}_s(u,\Omega)&=\int_\Omega\int_\Omega\int_0^1
\frac{|\chi_{\{u>t\}}(x)-\chi_{\{u>t\}}(y)|}{|x-y|^{n+s}}dtdxdy\\
&=2\int_0^1\int_{\{u>t\}}\int_{\Omega\setminus\{u>t\}}\frac{1}{|x-y|^{n+s}}dxdydt\\
&=2\int_0^1\M{J}^1_s(\{u>t\},\Omega)dt.
\end{split}
\end{equation*}
\end{proof}

\begin{prop}[Gluing]\label{propraccordo} Given $s\in (0,1)$, measurable sets $E_1$, $E_2$ in $\R{n}$
with $\M{J}^1_s(E_i,\Omega)<\infty$ for $i=1,2$ and given
$\delta_1>\delta_2>0$ we can find a measurable set $F$ such that
\begin{itemize}
\item[(a)] $\|\chi_{F}- \chi_{E_1}\|_{L^1(\Omega)}\le \|\chi_{E_1}-\chi_{E_2}\|_{L^1(\Omega)}$,
\item[(b)] $F\cap (\Omega\setminus\Omega_{\delta_1})=
E_1\cap (\Omega\setminus\Omega_{\delta_1})$, $F\cap \Omega_{\delta_2}=E_2 \cap \Omega_{\delta_2}$, where
$$\Omega_\delta:=\{x\in \Omega:d(x,\Omega^c)\le\delta\} \quad \text{for } \delta>0,$$
\item[(c)] for all $\ve>0$ we have
\begin{equation*}
\begin{split}
\M{J}^1_s(F,\Omega)\le&
\M{J}^1_s(E_1,\Omega)+\M{J}^1_s(E_2,\Omega_{\delta_1+\ve})+\frac{C}{\ve^{n+s}}\\
& +C(\Omega,\delta_1,\delta_2)\biggl[
\frac{\|\chi_{E_1}-\chi_{E_2}\|_{L^1(\Omega_{\delta_1}\setminus\Omega_{\delta_2})}}{(1-s)}
+\|\chi_{E_1}-\chi_{E_2}\|_{L^1(\Omega)}\biggr].
\end{split}
\end{equation*}
\end{itemize}
\end{prop}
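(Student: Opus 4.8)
The plan is to realize $F$ as a superlevel set of a function $u$ that interpolates between $\chi_{E_1}$ and $\chi_{E_2}$, and to select the level through the coarea formula of Lemma~\ref{coarea}. Fix $\eta:=(\delta_1-\delta_2)/3$, set $\delta_2':=\delta_2+\eta$, $\delta_1':=\delta_1-\eta$, and let $\varphi(x):=g(\dist(x,\Omega^c))$ with $g:[0,\infty)\to[0,1]$ Lipschitz, $g\equiv1$ on $[0,\delta_2']$, $g\equiv0$ on $[\delta_1',\infty)$, $|g'|\le1/\eta$. Then $0\le\varphi\le1$, $\varphi\equiv1$ on $\Omega_{\delta_2}$, $\varphi\equiv0$ on $\Omega\setminus\Omega_{\delta_1}$, and, since $\dist(\cdot,\Omega^c)$ is $1$-Lipschitz, $|\varphi(x)-\varphi(y)|\le\min\{1,|x-y|/\eta\}$ for all $x,y\in\Omega$. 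Put $u:=(1-\varphi)\chi_{E_1}+\varphi\chi_{E_2}:\Omega\to[0,1]$. Because $u\equiv\chi_{E_1}$ on $\Omega\setminus\Omega_{\delta_1}$, $u\equiv\chi_{E_2}$ on $\Omega_{\delta_2}$, and $u(x)=\chi_{E_1}(x)\in\{0,1\}$ whenever $\chi_{E_1}(x)=\chi_{E_2}(x)$, for every $t\in(0,1)$ the set $F:=\{u>t\}$ satisfies (b), and $F\Delta E_1\subset E_1\Delta E_2$, which gives (a). By Lemma~\ref{coarea} one has $\int_0^1\M{J}^1_s(\{u>t\},\Omega)\,dt=\frac12\M{F}_s(u,\Omega)$, so once $\frac12\M{F}_s(u,\Omega)$ is bounded by a constant multiple of the right-hand side of (c), a level $t\in(0,1)$ for which $\M{J}^1_s(\{u>t\},\Omega)$ does not exceed this average produces the desired set $F$.

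For the energy I would use the pointwise identity
$$u(x)-u(y)=(1-\varphi(x))\bigl[\chi_{E_1}(x)-\chi_{E_1}(y)\bigr]+\varphi(x)\bigl[\chi_{E_2}(x)-\chi_{E_2}(y)\bigr]+\bigl[\varphi(x)-\varphi(y)\bigr]w(y),\qquad w:=\chi_{E_2}-\chi_{E_1},$$
which yields $\M{F}_s(u,\Omega)\le T_1+T_2+T_3$, the $T_i$ being the three corresponding double integrals. Since $0\le1-\varphi\le1$, $T_1\le\M{F}_s(\chi_{E_1},\Omega)=2\M{J}^1_s(E_1,\Omega)$. In $T_2$ the factor $\varphi(x)$ restricts $x$ to $\Omega_{\delta_1}$; splitting the inner integral at $|x-y|=\ve$, on $\{|x-y|\le\ve\}$ one has $x,y\in\Omega_{\delta_1+\ve}$, so that part is $\le\M{F}_s(\chi_{E_2},\Omega_{\delta_1+\ve})=2\M{J}^1_s(E_2,\Omega_{\delta_1+\ve})$, while on $\{|x-y|>\ve\}$ the kernel is $\le\ve^{-n-s}$ and, bounding the rest of the integrand by $1$, that part is $\le C\ve^{-n-s}$. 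Hence $T_2\le2\M{J}^1_s(E_2,\Omega_{\delta_1+\ve})+C\ve^{-n-s}$.

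The heart of the matter is $T_3=\int_\Omega|w(y)|\,\Psi(y)\,dy$, where $\Psi(y):=\int_\Omega|\varphi(x)-\varphi(y)|\,|x-y|^{-n-s}\,dx$. Splitting the $x$-integral at $|x-y|=\eta$ and using $|\varphi(x)-\varphi(y)|\le\min\{1,|x-y|/\eta\}$ gives, for every $y$, the bound $\Psi(y)\le C(\Omega,\delta_1,\delta_2)(1-s)^{-1}$ (the near part costs $\lesssim\eta^{-s}/(1-s)$, the far part $\lesssim\eta^{-n-s}|\Omega|$, and $(1-s)^{-1}\ge1$); this controls $\int_{\Omega_{\delta_1}\setminus\Omega_{\delta_2}}|w|\Psi$ by $C(\Omega,\delta_1,\delta_2)(1-s)^{-1}\|w\|_{L^1(\Omega_{\delta_1}\setminus\Omega_{\delta_2})}$. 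For $y$ with $\dist(y,\Omega^c)\le\delta_2$ or $\dist(y,\Omega^c)\ge\delta_1$ I would instead observe that the integrand defining $\Psi(y)$ vanishes unless $\varphi(x)\ne\varphi(y)$; since $\varphi$ is constant on $\{\dist(\cdot,\Omega^c)\le\delta_2'\}$ and on $\{\dist(\cdot,\Omega^c)\ge\delta_1'\}$ while $\dist(\cdot,\Omega^c)$ is $1$-Lipschitz, this forces $|x-y|\ge\eta$ for such $y$, whence the improved bound $\Psi(y)\le\eta^{-n-s}|\Omega|\le C(\Omega,\delta_1,\delta_2)$ with no factor $(1-s)^{-1}$. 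As the set of such $y$ contains $\Omega\setminus(\Omega_{\delta_1}\setminus\Omega_{\delta_2})$, one concludes $T_3\le C(\Omega,\delta_1,\delta_2)\bigl[(1-s)^{-1}\|w\|_{L^1(\Omega_{\delta_1}\setminus\Omega_{\delta_2})}+\|w\|_{L^1(\Omega)}\bigr]$.

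Collecting $T_1$, $T_2$, $T_3$ bounds $\frac12\M{F}_s(u,\Omega)$ — which is finite thanks to $\M{J}^1_s(E_i,\Omega)<\infty$ and $|\Omega|<\infty$ — by a constant multiple of the right-hand side of (c); absorbing the constant and picking $t\in(0,1)$ with $\M{J}^1_s(\{u>t\},\Omega)\le\frac12\M{F}_s(u,\Omega)$, the set $F:=\{u>t\}$ has the properties (a), (b), (c). I expect the genuinely delicate point to be the improved bound on $\Psi$ off the annulus $\Omega_{\delta_1}\setminus\Omega_{\delta_2}$, which is precisely the reason for inserting the buffer parameters $\delta_2'$, $\delta_1'$: keeping the transition region of $\varphi$ at distance $\ge\eta$ from both $\Omega_{\delta_2}$ and $\Omega\setminus\Omega_{\delta_1}$ is what confines the $(1-s)^{-1}$ factor to the $L^1$ norm of $\chi_{E_1}-\chi_{E_2}$ over the transition annulus, rather than over all of $\Omega$.
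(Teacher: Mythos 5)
Your proof is correct and follows essentially the same approach as the paper: interpolate $\chi_{E_1}$ and $\chi_{E_2}$ with a cutoff $\varphi$, apply the coarea formula of Lemma~\ref{coarea} to select a good superlevel set, and bound $\M{F}_s$ of the interpolant by a three-term decomposition matching $T_1,T_2,T_3$. Your treatment of the cross term $T_3$ (the Lipschitz bound $|\varphi(x)-\varphi(y)|\le\min\{1,|x-y|/\eta\}$ together with buffer zones $\delta_1',\delta_2'$ forcing $|x-y|\ge\eta$ off the transition annulus) is a clean, slightly more elementary substitute for the paper's second-order Taylor expansion of $\varphi$, and yields the same estimate.
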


\begin{proof} Consider a function $\varphi\in C^\infty(\R{n})$
such that $0\le \varphi\le 1$ in $\Omega$, $\varphi\equiv 0$ in
$\Omega_{\delta_2}$, $\varphi\equiv 1$ in
$\Omega\setminus\Omega_{\delta_1}$, and $|\nabla \varphi|\le
{2}/{(\delta_1-\delta_2)}$.

Given two measurable functions $u,\,v:\Omega\to [0,1]$ such that
$\M{F}_s(u,\Omega)<\infty$, $\M{F}_s(v,\Omega)<\infty$, define
$w:\Omega\to[0,1]$ as $w:=\varphi u+(1-\varphi)v$. For
$x,y\in\Omega$ we can write
\begin{equation*}
\begin{split}
w(x)-w(y)&=(\varphi(x)-\varphi(y))u(y)+\varphi(x)(u(x)-u(y))\\
&\quad+(1-\varphi(x))(v(x)-v(y)) -v(y)(\varphi(x)-\varphi(y))\\
&=(\varphi(x)-\varphi(y))(u(y)-v(y))+\varphi(x)(u(x)-u(y))\\
&\quad+(1-\varphi(x))(v(x)-v(y)),
\end{split}
\end{equation*}
and infer
\begin{equation*}
\begin{split}
|w(x)-w(y)|\le&|\varphi(x)-\varphi(y)| |u(y)-v(y)|\\
&+\chi_{\{\varphi\ne 0\}}(x)|u(x)-u(y)|+\chi_{\{\varphi\ne
1\}}(x)|v(x)-v(y)|.\\
\end{split}
\end{equation*}
Observing that $\{\varphi\neq 0\}\subset
\Omega\setminus\Omega_{\delta_2}$ and $\{\varphi\neq
1\}\subset\Omega_{\delta_1}$ we get
\begin{equation*}
\begin{split}
\M{F}_s(w,\Omega)&\le\int_\Omega|u(y)-v(y)|\int_\Omega\frac{|\varphi(x)-\varphi(y)|}{|x-y|^{n+s}}dxdy\\
&\quad +\int_{\Omega\setminus\Omega_{\delta_2}}\int_\Omega \frac{|u(x)-u(y)|}{|x-y|^{n+s}}dxdy+\int_{\Omega_{\delta_1}}\int_\Omega\frac{|v(x)-v(y)|}{|x-y|^{n+s}}dxdy\\
&=:I_1+I_2+I_3.
\end{split}
\end{equation*}
From
$$|\varphi(x)-\varphi(y)|\le |\nabla \varphi(y)||x-y|
+\frac{1}{2}\|\nabla^2\varphi\|_\infty|x-y|^2$$ and the inequalities
$\int_\Omega|x-y|^{-(n+s-\alpha)} dx\leq C(\Omega)/(\alpha-s)$ (with
$\alpha=1$, $\alpha=2$) we have
\begin{equation*}
\begin{split}
I_1&\le
\int_\Omega|u(y)-v(y)|\int_\Omega\bigg(\frac{|\nabla\varphi(y)|}
{|x-y|^{n+s-1}}+\frac{\|\nabla^2\varphi\|_\infty}{2|x-y|^{n+s-2}}\bigg)dxdy\\
&\le C(\Omega,\delta_1,\delta_2)\bigg(\frac{\|u-v\|_{L^1(\Omega_{\delta_1}\setminus\Omega_{\delta_2})}}{1-s}+\frac{\|u-v\|_{L^1(\Omega)}}{(2-s)}\Bigg).
\end{split}
\end{equation*}
Clearly $I_2\le \M{F}_s(u,\Omega).$ As for $I_3$, choosing $\ve>0$ we get
\begin{equation*}
\begin{split}
I_3&\le \int_{\Omega_{\delta_1}}\int_{\Omega_{\delta_1+\ve}}\frac{|v(x)-v(y)|}{|x-y|^{n+s}}dxdy
+\int_{\Omega_{\delta_1}}\int_{\Omega\setminus\Omega_{\delta_1+\ve}}\frac{|v(x)-v(y)|}{|x-y|^{n+s}}dxdy\\
&\le \M{F}_s(v,\Omega_{\delta_1+\ve})+\frac{2\M{L}^n(\Omega_{\delta_1})\M{L}^n(\Omega\setminus\Omega_{\delta_1+\ve})}{\ve^{n+s}}.
\end{split}
\end{equation*}
Summing up we obtain
\begin{equation}\label{stimetta}
\begin{split}
\M{F}_s(w,\Omega)\le& \M{F}_s(u,\Omega) + \M{F}_s(v,\Omega_{\delta_1+\ve}) +C(\Omega,\delta_1,\delta_2)\frac{\|u-v\|_{L^1(\Omega_{\delta_1}\setminus \Omega_{\delta_2})}}{1-s}\\
&+C(\Omega,\delta_1,\delta_2)\|u-v\|_{L^1(\Omega)}+\frac{C(\Omega)}{\ve^{n+s}}.
\end{split}
\end{equation}
We now apply this with $u=\chi_{E_1}$, $v=\chi_{E_2}$, so that
\eqref{stimetta} reads as
\begin{equation}\label{stimetta2}
\begin{split}
\M{F}_s(w,\Omega)\le& 2\M{J}^1_s(E_1,\Omega) + 2\M{J}^1_s(E_2,\Omega_{\delta_1+\ve}) +C(\Omega,\delta_1,\delta_2)\frac{\|\chi_{E_1}-\chi_{E_2}\|_{L^1(\Omega_{\delta_1}\setminus \Omega_{\delta_2})}}{1-s}\\
&+C(\Omega,\delta_1,\delta_2)\|\chi_{E_1}-\chi_{E_2}\|_{L^1(\Omega)}+\frac{C(\Omega)}{\ve^{n+s}},
\end{split}
\end{equation}
and by Lemma \ref{coarea} there exists $t\in (0,1)$ such that
$F:=\{w>t\}$ satisfies
$$2\M{J}^1_s(F,\Omega)\le \M{F}_s(w,\Omega).$$
By construction we see that $F$ satisfies conditions (a) and (b), and
by \eqref{stimetta2} it follows that also condition (c) is
satisfied.
\end{proof}

The following corollary is an immediate consequence of
Proposition~\ref{propraccordo}.

\begin{cor}\label{raccordo}Given measurable sets $E_s\subset \R{n}$ for $s\in(0,1)$,
with $\chi_{E_s}\to \chi_E$ in $L^1(\Omega)$ as $s\uparrow 1$ and with $\M{J}^1_s(E_s,\Omega)<\infty$, $\M{J}^1_s(E,\Omega)<\infty$, and given
$\delta_1>\delta_2>0$ we can find measurable sets $F_s\subset\R{n}$
such that
\begin{enumerate}
\item[(a)] $\chi_{F_s}\to \chi_E$ in $L^1(\Omega)$ as $i\to \infty$,
\item[(b)] $F_s\cap (\Omega\setminus\Omega_{\delta_1})= E_s\cap (\Omega\setminus\Omega_{\delta_1})$,
$F_s\cap \Omega_{\delta_2}=E \cap \Omega_{\delta_2}$,
\item[(c)] for all $\ve>0$ we have
\begin{equation*}
\liminf_{s\uparrow 1}(1-s)\M{J}^1_{s}(F_s,\Omega)\le
\liminf_{s\uparrow 1}(1-s)\M{J}^1_s(E_s,\Omega)+\limsup_{s\uparrow 1}(1-s)\M{J}^1_{s}(E,\Omega_{\delta_1+\ve}).
\end{equation*}
\end{enumerate}
\end{cor}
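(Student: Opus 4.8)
The plan is to invoke Proposition~\ref{propraccordo} separately for each $s\in(0,1)$, taking $E_1=E_s$ and $E_2=E$; the hypotheses are met because $\M{J}^1_s(E_s,\Omega)<\infty$ and $\M{J}^1_s(E,\Omega)<\infty$ by assumption. This produces, for every $s$, a measurable set $F_s$ for which parts (a), (b), (c) of the proposition hold with $E$ in the role of $E_2$. Part (b) of the proposition is literally part (b) of the corollary, so nothing further is needed there.

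For part (a) of the corollary, part (a) of the proposition gives $\|\chi_{F_s}-\chi_{E_s}\|_{L^1(\Omega)}\le\|\chi_{E_s}-\chi_E\|_{L^1(\Omega)}$. Since by hypothesis $\chi_{E_s}\to\chi_E$ in $L^1(\Omega)$, the triangle inequality yields $\|\chi_{F_s}-\chi_E\|_{L^1(\Omega)}\le 2\|\chi_{E_s}-\chi_E\|_{L^1(\Omega)}\to 0$ as $s\uparrow 1$, so $\chi_{F_s}\to\chi_E$ in $L^1(\Omega)$.

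For part (c), I would fix $\ve>0$, multiply the inequality in part (c) of Proposition~\ref{propraccordo} through by $(1-s)$, and pass to the limit $s\uparrow 1$. The key observations are: with $\ve$ held fixed the term $(1-s)C/\ve^{n+s}$ vanishes as $s\uparrow 1$; and both $\|\chi_{E_s}-\chi_E\|_{L^1(\Omega_{\delta_1}\setminus\Omega_{\delta_2})}$ and $(1-s)\|\chi_{E_s}-\chi_E\|_{L^1(\Omega)}$ vanish as $s\uparrow 1$, so the entire summand carrying the constant $C(\Omega,\delta_1,\delta_2)$ disappears in the limit. Applying the elementary inequality $\liminf_{s\uparrow 1}(a_s+b_s)\le\liminf_{s\uparrow 1}a_s+\limsup_{s\uparrow 1}b_s$ with $a_s=(1-s)\M{J}^1_s(E_s,\Omega)$ and $b_s=(1-s)\M{J}^1_s(E,\Omega_{\delta_1+\ve})$ then gives precisely the estimate claimed in part (c) of the corollary.

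There is no real obstacle here beyond bookkeeping: the statement is, as asserted, an immediate consequence of the proposition. The only points deserving attention are the order in which the limits are taken --- $\ve$ must remain fixed while $s\uparrow 1$, so that the factor $(1-s)$ absorbs the blow-up $\ve^{-(n+s)}$ --- and the use of the subadditivity of $\liminf$ against $\limsup$ to separate the two surviving energy contributions.
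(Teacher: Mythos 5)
Your proposal is correct and is precisely the derivation the paper intends; indeed the paper itself dismisses the corollary as an ``immediate consequence'' of Proposition~\ref{propraccordo} and gives no proof. Applying the proposition with $E_1=E_s$, $E_2=E$, multiplying (c) by $(1-s)$ so that the $C/\ve^{n+s}$ term is killed and the $1/(1-s)$ factor in front of $\|\chi_{E_s}-\chi_E\|_{L^1(\Omega_{\delta_1}\setminus\Omega_{\delta_2})}$ cancels, and then invoking $\liminf(a_s+b_s)\le\liminf a_s+\limsup b_s$, is exactly the bookkeeping required.
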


We devote the rest of the section to the proof of the equality of
the consants $\Gamma_n$ and $\Gamma_n^*$ appearing in the proof of the
$\Gamma$-liminf and $\Gamma$-limsup respectively (we already proved
that $\Gamma_n^*=\omega_{n-1}$). We shall introduce an intermediate
quantity $\tilde{\Gamma}_n\in [\Gamma_n,\Gamma_n^*]$ and prove in
two steps that $\tilde{\Gamma}_n=\Gamma_n$ (by the gluing
Proposition~\ref{propraccordo}) and then use the local minimality of
hyperplanes to show that $\tilde{\Gamma}_n=\Gamma_n^*$.

\begin{lemma}\label{tilde} We have $\Gamma_n=\tilde \Gamma_n$, where
$$\tilde \Gamma_n:=\inf\big\{\liminf_{s\uparrow 1}(1-s)\M{J}^1_s(E_s,Q)\big\},$$
with the infimum taken over all families of measurable sets
$(E_s)_{0<s<1}$ with the property that $\chi_{E_s}\to \chi_H \text{
in }L^1(Q)$ as $s\uparrow 1$ and, for some $\delta>0$, $E_s\cap Q^\delta=
H\cap Q^\delta$ for all $s\in (0,1)$, where $Q^\delta=\{x\in
Q:d(x,Q^c)<\delta\}$.
\end{lemma}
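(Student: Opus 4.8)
The plan is as follows. One inequality is immediate: every family $(E_s)$ entering the definition of $\tilde\Gamma_n$ also enters the one defining $\Gamma_n$, so $\tilde\Gamma_n\ge\Gamma_n$; note moreover that the constant family $E_s\equiv H$ is admissible in both, so by Lemma~\ref{lemma*} both quantities are finite, $\le\omega_{n-1}$. To prove $\tilde\Gamma_n\le\Gamma_n$ it suffices to show that, given $\eta>0$ and a family $(E_s)$ with $\chi_{E_s}\to\chi_H$ in $L^1(Q)$ and $L:=\liminf_{s\uparrow1}(1-s)\M{J}^1_s(E_s,Q)\le\Gamma_n+\eta<\infty$, one can produce a family $(F_s)$ admissible for $\tilde\Gamma_n$ with $\liminf_{s\uparrow1}(1-s)\M{J}^1_s(F_s,Q)\le L+\eta$. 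The idea is to glue $E_s$ in the interior of $Q$ with $H$ in a boundary collar, which is exactly what the gluing Proposition~\ref{propraccordo} (or Corollary~\ref{raccordo}) provides when applied on $\Omega=Q$ with $E_1=E_s$ and $E_2=E=H$.

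Fix $\delta_1>\delta_2>0$ and $\ve>0$, to be chosen small later, and pick $s_k\uparrow1$ realizing $L$; along it $\M{J}^1_{s_k}(E_{s_k},Q)<\infty$, while $\M{J}^1_{s_k}(H,Q)<\infty$ by Lemma~\ref{lemma*}, so Proposition~\ref{propraccordo} produces sets $F_{s_k}$ with $\|\chi_{F_{s_k}}-\chi_{E_{s_k}}\|_{L^1(Q)}\le\|\chi_{E_{s_k}}-\chi_H\|_{L^1(Q)}$, with $F_{s_k}\cap Q_{\delta_2}=H\cap Q_{\delta_2}$ where $Q_{\delta_2}=\{x\in Q:d(x,Q^c)\le\delta_2\}\supseteq Q^{\delta_2}$, and with
$$\M{J}^1_{s_k}(F_{s_k},Q)\le\M{J}^1_{s_k}(E_{s_k},Q)+\M{J}^1_{s_k}(H,Q_{\delta_1+\ve})+\frac{C}{\ve^{n+s_k}}+C(Q,\delta_1,\delta_2)\left[\frac{\|\chi_{E_{s_k}}-\chi_H\|_{L^1(Q_{\delta_1}\setminus Q_{\delta_2})}}{1-s_k}+\|\chi_{E_{s_k}}-\chi_H\|_{L^1(Q)}\right].$$
Setting $F_s:=H$ for $s\notin\{s_k\}$, we get $\chi_{F_s}\to\chi_H$ in $L^1(Q)$ and $F_s\cap Q^{\delta_2}=H\cap Q^{\delta_2}$ for every $s$, so $(F_s)$ is admissible for $\tilde\Gamma_n$ with $\delta=\delta_2$. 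Multiplying the displayed inequality by $(1-s_k)$ and letting $k\to\infty$, the $\ve^{-n-s_k}$-term and the $L^1$-terms drop out (the penultimate one because $\chi_{E_s}\to\chi_H$ in $L^1(Q)$), leaving
$$\tilde\Gamma_n\le\liminf_{s\uparrow1}(1-s)\M{J}^1_s(F_s,Q)\le L+\limsup_{s\uparrow1}(1-s)\M{J}^1_s(H,Q_{\delta_1+\ve}).$$

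It remains to bound the collar term, which is the only genuine computation. If $x\in H$ then $|x-y|\ge-x_n$ for every $y\in H^c$, so $\int_{H^c}|x-y|^{-n-s}\,dy\le n\omega_n\int_{-x_n}^\infty r^{-1-s}\,dr=\frac{n\omega_n}{s}(-x_n)^{-s}$, whence for $0<\rho<1/4$
$$\M{J}^1_s(H,Q_\rho)\le\frac{n\omega_n}{s}\int_{H\cap Q_\rho}(-x_n)^{-s}\,dx.$$
Splitting $H\cap Q_\rho$ into the part within $\rho$ of the face $\{x_n=-1/2\}$ (where $(-x_n)^{-s}\le2$ on a slab of volume $\le\rho$) and the part within $\rho$ of a lateral face (where $x'$ ranges in an $(n-1)$-dimensional collar of volume $\le 2(n-1)\rho$ and $\int_{-1/2}^0(-x_n)^{-s}\,dx_n\le(1-s)^{-1}$), one finds $\int_{H\cap Q_\rho}(-x_n)^{-s}\,dx\le C(n)\rho/(1-s)$ for $s$ near $1$, hence $\limsup_{s\uparrow1}(1-s)\M{J}^1_s(H,Q_\rho)\le C(n)\rho$. (Alternatively: $\M{J}^1_s\le\M{J}_s$ and Lemma~\ref{poly} applied to $H$ on the Lipschitz domain $Q_\rho$ give the bound $\Gamma_n^*P(H,Q_\rho)=\omega_{n-1}P(H,Q_\rho)\le C(n)\rho$, since $\{x_n=0\}$ meets $\partial Q_\rho$ only in lower-dimensional pieces.) Choosing $\delta_1,\ve$ so small that $C(n)(\delta_1+\ve)\le\eta$ yields $\tilde\Gamma_n\le L+\eta\le\Gamma_n+2\eta$, and $\eta\downarrow0$ completes the proof. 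The only point to be careful about is that the gluing freezes the \emph{boundary collar} to $H$ and leaves the interior equal to $E_s$, matching the side on which $\tilde\Gamma_n$ is defined; everything else is routine handling of $L^1$-convergence and of $\liminf/\limsup$.
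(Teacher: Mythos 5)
Your proposal is correct and follows essentially the same route as the paper: the converse inequality is obtained by gluing $E_s$ in the interior of $Q$ with $H$ in a boundary collar via Proposition~\ref{propraccordo} (the paper packages this through Corollary~\ref{raccordo}), and then showing that the extra collar energy vanishes as the collar thins. The only (minor) difference is in bounding $\limsup_{s\uparrow 1}(1-s)\M{J}^1_s(H,Q_{\delta_1+\ve})$: you compute it directly from $\int_{H^c}|x-y|^{-n-s}dy\le n\omega_n(-x_n)^{-s}/s$, whereas the paper invokes Step~1 of Lemma~\ref{poly} to get $\Gamma_n^*P(H,Q^{\delta_1+\ve})$ — a route you also point out as an alternative — and both give the required $O(\delta_1+\ve)$ bound.
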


\begin{proof} Clearly $\tilde \Gamma_n\ge \Gamma_n$. In order to prove the converse consider sets
$E_s\subset\R{n}$ for $s\in (0,1)$ with $\chi_{E_s}\to \chi_H$ in
$L^1(Q)$ as $s\uparrow 1$. Without loss of generality we can assume
that $\M{J}^1_s(E_s,\Omega)<\infty$ for all $s\in (0,1)$. Then
according to Corollary~\ref{raccordo} for any given $\delta>0$ we
can find a family of measurable sets $(F_s)_{0<s<1}$ such that
$\chi_{F_s}\to \chi_H$ in $L^1(Q)$ as $s\uparrow 1$, $F_s\cap
Q^\delta=H\cap Q^\delta$ and
$$\tilde \Gamma_n\le \liminf_{s\uparrow 1}(1-s)\M{J}^1_s(F_s,\Omega)\le
\liminf_{s\uparrow 1}(1-s)\M{J}^1_s(E_s,Q)+\Gamma_n^*\inf_{\ve>0}
P(H,Q^{\delta+\ve}),$$
where we also used Lemma \ref{poly}.
Since $\delta>0$ is arbitrary and $P(H,Q^\delta)\rightarrow 0$ as $\delta\to 0$ we
infer
$$\tilde \Gamma_n\le \liminf_{s\uparrow 1}(1-s)\M{J}^1_s(E_s,Q)$$
and, since $(E_s)_{0<s<1}$ is arbitrary, this proves that $\tilde
\Gamma_n\le\Gamma_n$.
\end{proof}

\begin{lemma}\label{star} We have $\tilde \Gamma_n=\Gamma_n^*$.
\end{lemma}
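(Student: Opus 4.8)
The strategy is to prove $\tilde\Gamma_n\le\Gamma_n^*$ and $\tilde\Gamma_n\ge\Gamma_n^*$ separately; the first inequality is immediate, while the second is where the local minimality of halfspaces enters. For $\tilde\Gamma_n\le\Gamma_n^*$ it is enough to test the infimum defining $\tilde\Gamma_n$ with the constant family $E_s\equiv H$: this family is admissible, since the constraint $E_s\cap Q^\delta=H\cap Q^\delta$ holds for every $\delta>0$, hence $\tilde\Gamma_n\le\liminf_{s\uparrow1}(1-s)\M{J}^1_s(H,Q)\le\limsup_{s\uparrow1}(1-s)\M{J}^1_s(H,Q)=\Gamma_n^*$.

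For the reverse inequality, fix an admissible family $(E_s)_{0<s<1}$, i.e. $\chi_{E_s}\to\chi_H$ in $L^1(Q)$ as $s\uparrow1$ and $E_s\cap Q^\delta=H\cap Q^\delta$ for some fixed $\delta>0$; we must show $\liminf_{s\uparrow1}(1-s)\M{J}^1_s(E_s,Q)\ge\Gamma_n^*$. Since $\M{J}^1_s(\cdot,Q)$ depends only on the trace of a set in $Q$, we may replace $E_s$ by $\hat E_s:=(E_s\cap Q)\cup(H\setminus Q)$: this leaves $\M{J}^1_s(\cdot,Q)$ and the $L^1(Q)$–convergence unchanged, but now $\hat E_s$ coincides with $H$ both on $Q^\delta$ and on $Q^c$, so that $\hat E_s\Delta H\subset Q\setminus Q^\delta$. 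The latter is a compact subset of $Q$ lying at distance at least $\delta$ from $Q^c$, whence $\hat E_s\Delta H\Subset Q$. Since $Q$ is a bounded connected open set with Lipschitz boundary, the local minimality of the halfspace $H$ for $\M{J}_s(\cdot,Q)$ — the result of \cite{CRS} reproduced in the appendix — yields $\M{J}^1_s(H,Q)+\M{J}^2_s(H,Q)\le\M{J}^1_s(E_s,Q)+\M{J}^2_s(\hat E_s,Q)$.

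It remains to see that the two boundary terms $\M{J}^2_s$ essentially cancel. Here the fact that $E_s$ equals $H$ near $\partial Q$ is crucial: since $\hat E_s$ and $H$ coincide on $Q^\delta$ and on $Q^c$, the integrands of $\M{J}^2_s(\hat E_s,Q)$ and $\M{J}^2_s(H,Q)$ differ only when one variable lies in $(\hat E_s\Delta H)\cap Q\subset Q\setminus Q^\delta$ and the other lies in $Q^c$, hence when the two variables are at distance $\ge\delta$. Bounding the kernel on that region by $\int_{\{|z|\ge\delta\}}|z|^{-n-s}\,dz=n\omega_n(s\delta^s)^{-1}$ and integrating the remaining variable over $(\hat E_s\Delta H)\cap Q$ gives $|\M{J}^2_s(\hat E_s,Q)-\M{J}^2_s(H,Q)|\le 2n\omega_n(s\delta^s)^{-1}\|\chi_{E_s}-\chi_H\|_{L^1(Q)}$, which tends to $0$ as $s\uparrow1$ because $\|\chi_{E_s}-\chi_H\|_{L^1(Q)}\to0$ and $\delta$ is fixed (the subtraction is legitimate since $\M{J}_s(H,Q)<\infty$, as follows from the estimates in the proof of Lemma~\ref{poly} or by a direct computation). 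Inserting this into the minimality inequality, multiplying by $(1-s)$ and letting $s\uparrow1$ gives $\liminf_{s\uparrow1}(1-s)\M{J}^1_s(E_s,Q)\ge\liminf_{s\uparrow1}(1-s)\M{J}^1_s(H,Q)$; by Lemma~\ref{lemma*} the $\limsup$ in \eqref{gamman*} is in fact a limit, so the right–hand side equals $\lim_{s\uparrow1}(1-s)\M{J}^1_s(H,Q)=\Gamma_n^*$. As $(E_s)$ was arbitrary, $\tilde\Gamma_n\ge\Gamma_n^*$, and together with the previous step this proves $\tilde\Gamma_n=\Gamma_n^*$.

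The main obstacle — and the reason Lemma~\ref{tilde} was proved beforehand — is that both the applicability of local minimality (one needs $\hat E_s\Delta H\Subset Q$) and the near–cancellation of the $\M{J}^2_s$ terms break down for a general competitor of $\Gamma_n$, which may interact with $\partial Q$ at arbitrarily small scales and need not satisfy $E_s\Delta H\Subset Q$; restricting to competitors that agree with $H$ in a neighbourhood of $\partial Q$ is exactly what makes these two points work. Apart from the local minimality of halfspaces, borrowed (with proof) from \cite{CRS}, the argument is elementary manipulation of the explicit kernel $|x-y|^{-n-s}$.
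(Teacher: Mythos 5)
Your argument is correct and is essentially the paper's own proof: after replacing $E_s$ by a competitor agreeing with $H$ on $Q^c$ (legitimate because $\M{J}^1_s(\cdot,Q)$ only sees the set inside $Q$), you invoke Proposition~\ref{Hmin} to compare full energies $\M{J}_s$, show that the $\M{J}^2_s$ terms differ by $O(\delta^{-s})\|\chi_{E_s}-\chi_H\|_{L^1(Q)}$ thanks to $E_s\Delta H\subset Q\setminus Q^\delta$, multiply by $(1-s)$ and let $s\uparrow1$ to get $\liminf(1-s)\M{J}^1_s(E_s,Q)\ge\Gamma_n^*$. The only cosmetic differences from the paper are that you keep the $L^1$-norm factor in the $\M{J}^2_s$ estimate (the paper's cruder bound $2n\omega_n/(s\delta^s)$ already suffices since one multiplies by $1-s$) and that you spell out the reduction $E_s\mapsto\hat E_s$ slightly more explicitly.
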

\begin{proof} Clearly $\tilde \Gamma_n\le \Gamma_n^*$.
In order to prove the converse we consider sets $(E_s)_{0<s<1}$ with
$\chi_{E_s}\to \chi_H$ in $L^1(Q)$ as $s\uparrow 1$ and with $E_s\cap Q^\delta=H\cap
Q^\delta$ for some $\delta>0$ (here $Q^\delta$ is defined as in
Lemma~\ref{tilde}). Since our goal is to estimate $\M{J}^1_s(E_s,Q)$
from below, possibly modifying $E_s$ outside $Q$ we may assume that
\begin{equation}\label{EsH}
E_s\cap (\R{n}\setminus Q)= H \cap (\R{n}\setminus Q).
\end{equation}
This implies, according to Proposition \ref{Hmin} in the Appendix, that $\M{J}_s(H,Q)\le
\M{J}_s(E_s,Q)$ for $s\in (0,1)$. Then, in order to prove that
\begin{equation}\label{Js}
\lim_{s\uparrow 1}(1-s)\M{J}^1_s(H,Q)\le \liminf_{s\to
1^-}(1-s)\M{J}^1_s(E_s,Q),
\end{equation}
it is enough to show that
\begin{equation}\label{Js2}
\lim_{s\uparrow 1}(1-s)(\M{J}^2_s(H,Q)-\M{J}^2_s(E_s,Q))=0.
\end{equation}
One immediately sees that \eqref{EsH} imples
\begin{equation*}
\begin{split}
|\M{J}^2_s(H,Q)-\M{J}^2_s(E_s,Q)|\le& \int_{(E_s\Delta H)\cap Q}\int_{H^c\cap Q^c}\frac{1}{|x-y|^{n+s}}dxdy\\
&+ \int_{(E_s^c\Delta H^c)\cap Q}\int_{H\cap Q^c}\frac{1}{|x-y|^{n+s}}dxdy=:I+II.
\end{split}
\end{equation*}
Observing that $(E_s\Delta H)\cap Q^\delta=\emptyset$ we can estimate for $y\in(E_s\Delta H)\cap Q$
$$\int_{H^c\cap Q^c}\frac{1}{|x-y|^{n+s}}dx\le
\int_{\R{n}\setminus
B_\delta(y)}\frac{1}{|x-y|^{n+s}}dx=\frac{n\omega_{n}}{s\delta^s},
$$ hence $I\le {n\omega_{n}}/{(s\delta^s)}$. One can bound from above $II$
in the same way. Now \eqref{Js2} follows at once upon multiplying by
$1-s$ and letting $s\uparrow 1$. This shows \eqref{Js}, and taking the
infimum in \eqref{Js} over all families $(E_s)_{0<s<1}$ as above
shows that $\Gamma_n^*\le \tilde \Gamma_n$.
\end{proof}

\section{Proof of Theorem \ref{trm3}}

In order to prove \eqref{energybou} define $\Omega_\delta$ as in
Proposition~\ref{propraccordo} for some small $\delta>0$ and set
$F_i:= E_i\cap(\Omega^c\cup \Omega_\delta)$. By the minimality of
$E_i$ we then have
\begin{equation*}
\begin{split}
\limsup_{i\to\infty}(1-s_i)\M{J}_{s_i}(E_i,\Omega\setminus\Omega_\delta)&\le
\limsup_{i\to\infty}(1-s_i)\Big(\M{J}_{s_i}(E_i,\Omega)-\M{J}^1_{s_i}(E_i,\Omega_\delta)\Big)\\
&\le \limsup_{i\to\infty}(1-s_i)\Big(\M{J}_{s_i}(F_i,\Omega)-\M{J}^1_{s_i}(E_i,\Omega_\delta)\Big)\\
&= \limsup_{i\to\infty}(1-s_i)\Big[\big(\M{J}^1_{s_i}(F_i,\Omega)-\M{J}^1_{s_i}(F_i,\Omega_\delta) \big)+
\M{J}^2_{s_i}(F_i,\Omega)\Big].
\end{split}
\end{equation*}
Since $F_i\cap (\Omega\setminus\Omega_\delta)=\emptyset$ we have, using Proposition \ref{punt2} in the appendix,
\begin{equation*}
\begin{split}
\limsup_{i\to\infty}(1-s_i)\big(\M{J}^1_{s_i}(F_i,\Omega)-\M{J}^1_{s_i}(F_i,\Omega_\delta) \big)&\le
\limsup_{i\to\infty}(1-s_i)\M{J}^1_{s_i}(\Omega\setminus \Omega_\delta,\Omega)\\
&= \limsup_{i\to\infty}(1-s_i)\frac{\M{F}_{s_i}(\chi_{\Omega\setminus\Omega_\delta},\Omega)}{2}\\
&\le \frac{n\omega_n P(\Omega\setminus \Omega_\delta,\R{n})}{2}.
\end{split}
\end{equation*}
Again using Proposition \ref{punt2} in the appendix we get
$$\limsup_{i\to\infty}(1-s_i)\M{J}^2_{s_i}(F_i,\Omega)\le
\limsup_{i\to\infty}(1-s_i)\M{J}^1_{s_i}(\Omega,\R{n})\le \frac{n\omega_{n}P(\Omega,\R{n})}{2},$$
whence \eqref{energybou} follows for $\Omega'\subset\Omega\setminus\Omega_\delta$,
hence for every $\Omega'\Subset\Omega$.

For the sake of simplicity we first consider perturbations in
compactly supported balls. The general case will require only minor
modifications.

Consider the monotone set function $\alpha_i(A):=(1-s_i)\M{J}^1_{s_i}(E_i,A)$ for every open set $F\subset\Omega$ (see the appendix for the definition and some basic properties of monotone set functions), extended to
$$\alpha_i(F):=\inf\{\alpha_i(A): F\subset A\subset\Omega, \ A\text{ open}\}$$
for every $F\subset\Omega.$
Clearly $\alpha_i$ is regular. Thanks to \eqref{energybou} and Theorem \ref{DGL}, up to  extracting a subsequence, $\alpha_i$ weakly converges to a regular monotone set function $\alpha$, which is regular and super-additive.
We shall now prove that if $B_R(x)\Subset\Omega$ and $\alpha(\de B_R(x))=0$, then $E$ is a local minimum of
the functional $P(\cdot,B_R(x))$, and
$$\lim_{i\to\infty}(1-s_i)\M{J}_{s_i}(E_i,B_R(x))=P(E,B_R(x)).$$
Indeed consider a Borel set $F\subset\Omega$ such that $E\Delta
F\Subset B_R$ (here and in the following $x$ is fixed and
$B_r:=B_r(x)$ for any $r>0$). Then we can find $r<R$ such that
$E\Delta F\subset B_r$. By Theorem \ref{trm2} there exist sets $F_i$
such that
$$\lim_{i\to \infty}|(F_i\Delta F)\cap B_R|= 0,\quad \lim_{i\to \infty}
(1-s_i)\M{J}_{s_i}(F_i, B_R)=\omega_{n-1} P(F,B_R).$$ According to Proposition \ref{propraccordo}, given $\rho$ and $t$ with $r<\rho<t<R$, we can find sets
$G_i$ such that
$$G_i= E_i \text{ in } \R{n}\setminus B_t,\quad G_i=F_i\text{ in }B_\rho,$$
and for all $\ve>0$ there holds
\begin{equation*}
\begin{split}
\M{J}^1_{s_i}(G_i, B_R)&\le \M{J}^1_{s_i}(F_i, B_R)+\M{J}^1_{s_i}(E_i, B_R\setminus \overline B_{\rho-\ve})+\frac{C}{\ve^{n+s_i}}\\
&\quad + \frac{C|(E_i\Delta F_i)\cap (B_t\setminus
B_\rho)|}{(1-s_i)}+C|(F_i\Delta E_i)\cap B_R|.
\end{split}
\end{equation*}
By the local minimality of $E_i$ we infer
\begin{equation*}
\M{J}_{s_i}(E_i, B_R)\le \M{J}_{s_i}(G_i, B_R).
\end{equation*}
We shall now estimate
\begin{equation*}
\begin{split}
\M{J}_{s_i}^2(G_i,B_R)&=\int_{G_i\cap B_R}\int_{G_i^c\cap B_R^c}\frac{dxdy }{|x-y|^{n+s_i}}
+\int_{G_i^c\cap B_R}\int_{G_i\cap B_R^c}\frac{dxdy}{|x-y|^{n+s_i}}\\
&=:I+II
\end{split}
\end{equation*}
We have
\begin{equation*}
\begin{split}
I=&\int_{G_i\cap B_R}\int_{E_i^c\cap B_R^c}\frac{dxdy
}{|x-y|^{n+s_i}}
=\int_{G_i\cap B_t}\int_{E_i^c\cap B_R^c}\frac{dxdy }{|x-y|^{n+s_i}}+
\int_{E_i\cap (B_R\setminus B_t)}\int_{E_i^c\cap B_R^c}\frac{dxdy }{|x-y|^{n+s_i}}\\
\le&\frac{C|G_i\cap B_t|}{s_i(R-t)^{s_i}}+\int_{E_i\cap (B_R\setminus
B_t)} \int_{E_i^c\cap (B_{R'}\setminus B_R)}\frac{dxdy
}{|x-y|^{n+s_i}}+
\int_{E_i\cap (B_R\setminus B_t)}\int_{E_i^c\cap B_{R'}^c}\frac{dxdy }{|x-y|^{n+s_i}}\\
\le & \M{J}^1_{s_i}(E_i,B_{R'}\setminus \overline B_t)+
\frac{C}{s_i}\bigg(\frac{1}{(R-t)^{s_i}}+\frac{1}{(R'-R)^{s_i}}\bigg),
\end{split}
\end{equation*}
for any $R'\in (R,{\rm dist}(x,\partial\Omega))$. Since $II$ can be
estimated in a similar way, we infer
$$\M{J}_{s_i}^2(G_i,B_R)\le2\M{J}^1_{s_i}(E_i, B_{R'}\setminus \overline B_t)+
\frac{C}{s_i}\bigg(\frac{1}{(R-t)^{s_i}}+\frac{1}{(R'-R)^{s_i}}\bigg),
$$
hence,
$$
\limsup_{i\to\infty}(1-s_i)\M{J}^2_{s_i}(G_i,B_R)\le
2\limsup_{i\to\infty}(1-s_i)\M{J}^1_{s_i}(E_i, B_{R'}\setminus \overline B_t).$$
Finally
\begin{equation}\label{chain}
\begin{split}
\omega_{n-1}P(E,B_R)\le & \liminf_{i\to \infty}(1-s_i)\M{J}^1_{s_i}(E_i, B_R)\le  \liminf_{i\to \infty}(1-s_i)\M{J}_{s_i}(E_i,B_R)\\
\le & \liminf_{i\to \infty}(1-s_i)\M{J}_{s_i}(G_i,B_R)\\
\le & \liminf_{i\to \infty}(1-s_i)\M{J}^1_{s_i}(G_i,B_R)+\limsup_{i\to \infty}(1-s_i)\M{J}^2_{s_i}(G_i,B_R)\\
\le & \liminf_{i\to
\infty}(1-s_i)\M{J}^1_{s_i}(F_i,B_R)+3\limsup_{i\to \infty}
(1-s_i)\M{J}^1_{s_i}(E_i,B_{R'}\setminus \overline B_{\rho-\ve})\\
&+C\lim_{i\to\infty} |(E_i\Delta F_i)\cap (B_t\setminus B_\rho)|.
\end{split}
\end{equation}
The last term is zero, since $E=F$ in $B_t\setminus B_\rho$ and
$|(E_i\Delta E)\cap B_R|\to 0$, $|(F_i\Delta F)\cap B_R|\to 0$ as
$i\to\infty$. Using Proposition \ref{useful} from the appendix, and
recalling that $\alpha(\de B_R)=0,$ we infer
$$\lim_{R'\downarrow R,\ \rho\uparrow R,\ \ve\downarrow 0}
\limsup_{i\to \infty}(1-s_i)\M{J}^1_{s_i}(E_i,B_{R'}\setminus
\overline B_{\rho-\ve})=\lim_{\delta\to 0}\limsup_{i\to\infty}
\alpha_i(B_{R+\delta}\setminus \overline B_{R-\delta})=0,$$
and \eqref{chain} finally yields

$$\omega_{n-1} P(E,B_R)\le \lim_{i\to \infty}(1-s_i)\M{J}_{s_i}(F_i,B_R)=\omega_{n-1} P(F,B_R),$$
so $E$ is a local minimizer of $P(\cdot, B_R)$.
Choosing $F=E$ the chain of inequalities in \eqref{chain} gives
\begin{equation}\label{ener}
\lim_{i\to\infty} (1-s_i)\M{J}_{s_i}(E_i,B_R)=\lim_{i\to\infty} (1-s_i)\M{J}^1_{s_i}(E_i,B_R)=\omega_{n-1}P(E,B_R),
\end{equation}
as wished. In order to complete the proof we first remark that the above arguments applies to any open set $\Omega'\Subset\Omega$ with Lipschitz boundary and $\alpha(\de\Omega')=0$, upon replacing $B_R(x)$ by $\Omega'$,  $B_{R+\delta}$ by $N_\delta(\Omega')$ and $B_{R-\delta}$ by $N_{-\delta}(\Omega')$, where
$$N_\delta(\Omega'):=\{x\in \Omega: d(x,\Omega')<\delta\}, \quad N_{-\delta}(\Omega'):=\{x\in\Omega':d(x,\de\Omega')>\delta\}\quad \text{for $\delta>0$ small}.$$
In particular $\alpha(\Omega')=P(E,\Omega')$ for every open set $\Omega'\Subset\Omega$ with Lipschitz boundary and  $\alpha(\de\Omega')=0$. Since for every $\Omega'\Subset \Omega$ and $\ve>0$ small enough the set
$$\{\delta\in (-\ve,\ve):\alpha(\de N_\delta(\Omega'))>0\}$$
is at most countable (remember that $\alpha$ is super-additive and locally finite), and since both $\alpha$ and $P(E,\cdot)$ are \emph{regular} monotone set functions on $\Omega$, it is not difficult to show that $\alpha=P(E,\cdot)$, and the proof is complete.
\hfill $\square$

\section{Appendix. Some useful results}

We list here some results which we used in the previous sections.

\begin{prop}\label{poli} Let $ E\subset\R{n} $ be a set
with finite perimeter in $ \Omega $. Then for every $ \ve >0 $ there
exists a polyhedral set $ \Pi\subset\R{n}  $ such that
\begin{itemize}
\item[(i)] $|(E\triangle \Pi) \cap \Omega|< \ve,$
\item[(ii)]$ |P(E,\Omega)-P(\Pi,\Omega)|< \ve ,$
\item[(iii)]$P(\Pi,\de \Omega)=0.$
\end{itemize}
\end{prop}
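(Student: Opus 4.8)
The plan is to approximate $E$ by polyhedra in the strong sense specified in (i)--(ii), while taking care of the transversality condition (iii). First I would recall the classical polyhedral approximation theorem for sets of finite perimeter (e.g.\ \cite{giu}, \cite{AFP}): for every set $E$ of finite perimeter in $\Omega$ and every $\ve>0$ there is a polyhedron $\Pi_0$ with $|(E\triangle\Pi_0)\cap\Omega|<\ve$ and $|P(E,\Omega)-P(\Pi_0,\Omega)|<\ve$. This handles (i) and (ii) but says nothing about how the faces of $\Pi_0$ meet $\partial\Omega$; in particular $P(\Pi_0,\partial\Omega)$ could be positive if a face of $\partial\Pi_0$ happens to overlap $\partial\Omega$ on a set of positive $\M H^{n-1}$-measure.

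Next I would remove this degeneracy by a small generic perturbation. Since $\Pi_0$ has finitely many faces, each lying in a hyperplane $H_1,\dots,H_m$, I would translate $\Pi_0$ by a vector $v$ of small norm, or equivalently dilate slightly about a generic center; for all but finitely many (indeed, a null set of) such $v$ the translated hyperplanes $H_j+v$ intersect the Lipschitz boundary $\partial\Omega$ in an $\M H^{n-1}$-null set, because the coarea formula applied to the distance-type function measuring overlap shows that $\M H^{n-1}\big((H_j+v)\cap\partial\Omega\big)=0$ for a.e.\ $v$ in the relevant one-parameter family, and a finite union of null sets is null. Picking such a $v$ with $|v|$ small enough and setting $\Pi:=\Pi_0+v$, continuity of $E\mapsto |(E\triangle\Pi)\cap\Omega|$ and of the perimeter under translations of $\Pi_0$ guarantees that (i) and (ii) still hold (with $\ve$ replaced by $2\ve$, say, which is harmless after relabeling), while now $P(\Pi,\partial\Omega)=\sum_j\M H^{n-1}\big((H_j+v)\cap\partial\Omega\cap\overline\Pi\big)=0$, giving (iii).

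The main obstacle is making the genericity argument in the second step clean: one must verify that the bad set of translation parameters for which a face has positive-measure overlap with the Lipschitz hypersurface $\partial\Omega$ is Lebesgue-null. The cleanest route is to fix a face direction, foliate a neighbourhood of $\partial\Omega$ by the parallel hyperplanes in that direction, and apply the coarea/Fubini theorem to $\M H^{n-1}\res\partial\Omega$: the function assigning to each leaf its $\M H^{n-1}$-measure is integrable, hence finite a.e., but more to the point $\sum$ of the at-most-countably-many leaves carrying positive mass is countable, so almost every translate avoids them. Repeating over the finitely many face directions and intersecting the (countably many, hence null) exceptional sets completes the argument. Once this is in place, the proposition follows immediately.
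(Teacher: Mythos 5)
Your proposal reaches the same conclusion as the paper but the mechanism you use to secure (iii) is genuinely different. The paper's proof also begins from the classical polyhedral approximation (yielding a $\Pi'$ with (i) and (ii)), but then observes that $P(\Pi',\partial\Omega)>0$ forces $\partial\Omega$ to contain a flat piece $\Sigma$ of positive $\mathcal H^{n-1}$-measure with $\nu_\Omega=\pm\nu_{\Pi'}$ constant on $\Sigma$, and exploits the fact that the set of unit vectors $\nu$ for which $\mathcal H^{n-1}(\{x\in\partial\Omega:\nu_\Omega(x)=\nu\})>0$ is at most countable; a small generic \emph{rotation} $R\in SO(n)$ then makes every face normal of $R\Pi'$ avoid that countable set, so each face meets $\partial\Omega$ transversally. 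You instead fix the face directions and use a small generic \emph{translation}, arguing via Fubini/coarea that for each of the finitely many face hyperplanes only countably many heights give a positive-measure overlap with $\partial\Omega$. Both are legitimate genericity arguments; yours is more ``one hyperplane at a time'' while the paper's works directly on the set of exceptional normals of $\partial\Omega$. Rotation has the small conceptual advantage that it eliminates parallelism altogether (turning any potential overlap into a lower-dimensional intersection regardless of position), whereas translation only shifts the hyperplane within its own parallel family, but your Fubini argument correctly shows this still suffices.

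One point you should tighten, which the paper also glosses over with an ``it is easy to see'': you invoke ``continuity of the perimeter under translations of $\Pi_0$'' to conclude that (ii) survives the perturbation, but the map $v\mapsto P(\Pi_0+v,\Omega)$ is in general only lower semicontinuous and can jump \emph{up} by as much as $\mathcal H^{n-1}(\partial\Pi_0\cap\partial\Omega)$ when a face of $\Pi_0$ lies on $\partial\Omega$ and is pushed into $\Omega$ (e.g.\ a slab whose face coincides with a face of a cubical $\Omega$). To close this, either note that the classical construction (mollification of $\chi_E$ followed by a generic level set and a subsequent polyhedral approximation) can be arranged so that $\mathcal H^{n-1}(\partial\Pi_0\cap\partial\Omega)$ is already arbitrarily small, in which case the jump is harmless, or choose the sign of the translation in each offending normal direction so that the coinciding faces are pushed \emph{outward}. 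With that remark added, your argument is complete and at the same level of detail as the paper's.
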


\begin{proof}
Classical theorems (see for example \cite{AFP,DG}) imply that there exists a polyhedral set $ \Pi' $ satisfying (i) and (ii). In order to get (iii) first notice that
\[
P(\Pi',\de \Omega)>0\text{ if and only if } \mathcal H^{n-1}(\de \Pi' \cap \de \Omega)>0,
\]
and that the latter condition can be satisfied only if $ \de \Omega
$ contains a piece $ \Sigma $ with $\M{H}^{n-1}(\Sigma)>0$ contained
in a hyperplane and $ \nu_\Omega=\pm \nu_{\Pi'}=\rm const$ on
$\Sigma$ (here $\nu_\Omega$ and $\nu_{\Pi'}$ denote the interior
unit normal to $\de \Omega$ and $\de \Pi'$ respectively). Since the
set
$$\big\{\nu\in S^{n-1}: \M{H}^{n-1}(\{x\in\de\Omega:\nu_\Omega(x)=\nu\})>0\big\} $$
is at most countable, it is easy to see that there exists a rotation
 $R\in SO(n)$ close enough to the identity so that the polyhedron
 $ \Pi:=R(\Pi') $ satisfies (i), (ii) and (iii).
\end{proof}

\begin{prop}\label{punt2} Let $ u \in BV(\Omega)$  and let $\Omega'\Subset \Omega$ be open. Then we have
\begin{equation} \label{th}
\limsup_{s\uparrow 1} (1-s) \M{F}_s(u,\Omega')\leq
n\omega_{n}\limsup_{|h|\to 0}
\int_{\Omega'}\frac{|u(x+h)-u(x)|}{|h|}dx\le
n\omega_{n}|Du|(\Omega).
\end{equation}
\end{prop}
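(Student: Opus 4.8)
The plan is to treat the two inequalities in \eqref{th} separately. The second one is a standard property of $BV$ functions. Set $d_0:=\dist(\Omega',\partial\Omega)>0$; for $|h|<d_0$ and $x\in\Omega'$ the point $x+h$ lies in $\Omega$, and approximating $u$ by its mollifications one obtains $\|\tau_h u-u\|_{L^1(\Omega')}\le|h|\,|Du|(U_{|h|})$, where $U_r:=\{x:\dist(x,\Omega')<r\}$. Since $U_r\subset\Omega$ for $r<d_0$ and $U_r\downarrow\overline{\Omega'}$ as $r\downarrow0$, continuity from above of the finite measure $|Du|$ gives $\limsup_{|h|\to0}|h|^{-1}\|\tau_h u-u\|_{L^1(\Omega')}\le|Du|(\overline{\Omega'})\le|Du|(\Omega)$, which is the second inequality. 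In particular, writing $L:=\limsup_{|h|\to0}|h|^{-1}\|\tau_h u-u\|_{L^1(\Omega')}$, the nondecreasing function $\psi(r):=\sup_{0<|h|\le r}|h|^{-1}\|\tau_h u-u\|_{L^1(\Omega')}$ is finite (indeed $\psi(r)\le|Du|(\Omega)$ for $r<d_0$) and satisfies $\lim_{r\downarrow0}\psi(r)=L$.

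For the first inequality I would use the substitution $h=y-x$ to write
\[
\M{F}_s(u,\Omega')=\int_{\R{n}}\frac{1}{|h|^{n+s}}\Big(\int_{\Omega'\cap(\Omega'-h)}|u(x+h)-u(x)|\,dx\Big)\,dh .
\]
The part of this integral over $\{|h|\ge d_0\}$ is at most $2\|u\|_{L^1(\Omega')}\int_{\{|h|\ge d_0\}}|h|^{-n-s}\,dh=2n\omega_n\|u\|_{L^1(\Omega')}/(sd_0^s)$, which stays bounded as $s\uparrow1$ and hence contributes an infinitesimal after multiplication by $1-s$. Over $\{|h|<d_0\}$ one may enlarge the inner domain to $\Omega'$ (as $x+h\in\Omega$ there) and bound the inner integral by $|h|\,\psi(|h|)$; passing to polar coordinates this gives, as $s\uparrow1$,
\[
(1-s)\M{F}_s(u,\Omega')\le o(1)+n\omega_n(1-s)\int_0^{d_0}\frac{\psi(r)}{r^s}\,dr .
\]
To conclude, fix $\ve>0$ and pick $\delta\in(0,d_0)$ with $\psi(r)\le L+\ve$ for $r\le\delta$; then $(1-s)\int_0^\delta r^{-s}\psi(r)\,dr\le(L+\ve)\delta^{1-s}\to L+\ve$, while $(1-s)\int_\delta^{d_0}r^{-s}\psi(r)\,dr\le|Du|(\Omega)(d_0^{1-s}-\delta^{1-s})\to0$. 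Letting $\ve\downarrow0$ yields $\limsup_{s\uparrow1}(1-s)\M{F}_s(u,\Omega')\le n\omega_n L$, which is the first inequality; combined with $L\le|Du|(\Omega)$ from the first paragraph it also gives the stated bound $n\omega_n|Du|(\Omega)$.

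The computation is essentially routine; the points deserving a little care are the bookkeeping between $\Omega'\cap(\Omega'-h)$ and $\Omega'$ near $|h|=d_0$, the finiteness of $\psi$ (which is precisely the content of the $BV$ difference-quotient estimate used for the second inequality), and the elementary identity $(1-s)\int_0^\rho r^{-s}\,dr=\rho^{1-s}\to1$, which is what produces the sharp constant $n\omega_n=\M{H}^{n-1}(S^{n-1})$.
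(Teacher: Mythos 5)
Your argument is essentially the paper's own: change variables $h=y-x$, split $\mathcal{F}_s$ into the regions $|h|<d_0$ and $|h|\ge d_0$, observe that the tail is $O(1)$ and contributes $o(1)$ after multiplying by $1-s$, and then use the difference-quotient bound together with polar coordinates and $(1-s)\int_0^\rho r^{-s}\,dr=\rho^{1-s}\to 1$ to recover the constant $n\omega_n$. The only differences are cosmetic (you package the small-$|h|$ estimate via the monotone envelope $\psi$ and a two-piece split of $\int_0^{d_0}$, whereas the paper fixes a single $L>\limsup g$ and a corresponding $\delta_L$) plus the welcome bonus that you spell out the ``well-known'' second inequality, so the proposal is correct and matches the paper's proof.
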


\begin{proof}
For $h\in\R{n}$ let us define
\[
g(h)= \int_{\Omega'}\frac{|u(x+h)-u(x)|}{|h|}dx
\]
and fix $ L>\limsup_{|h|\to 0} g(h) $. Then there exists
$\delta_L>0$ such that $\Omega'+h\subset\Omega$ for all $h\in
B_{\delta_L}$ and $L\ge g(h)$ for $0<|h|\le \delta_L$. Multiplying
by $|h|^{-n-s+1}$ and integrating with respect to $h$ on
$B_{\delta_L}$ we obtain
\begin{equation}\label{1}
\frac{n\omega_n \delta_L^{1-s}L}{1-s}\ge \int_{B_{\delta_L}} \frac {g(h)}{|h|^{n+s-1}}dh =\int_{B_{\delta_L}} \int_{\Omega'}\frac{|u(x+h)-u(x)|}{|h|^{n+s}}dxdh.
\end{equation}
Now notice that
\begin{equation}\label{2}
\begin{split}
&\int_{\Omega'}\int_{\Omega'}\frac{|u(x)-u(y)|}{|x-y|^{n+s}}dxdy \\
 =& \int_{(\Omega'\times \Omega')\cap \{|x-y|\le \delta_L\}}\frac{|u(x)-u(y)|}{|x-y|^{n+s}}dxdy+\int_{(\Omega'\times \Omega')\cap \{|x-y|\ge \delta_L\}}\frac{|u(x)-u(y)|}{|x-y|^{n+s}}dxdy\\
 \le& \int_{B_{\delta_L}} \int_{\Omega'}\frac{|u(x+h)-u(x)|}{|h|^{n+s}}dxdh+\int_{B_{\delta_L}^c} \int_{\Omega'}\frac{|u(x+h)-u(x)|}{|h|^{n+s}}dxdh\\
 \le& \int_{B_{\delta_L}} \int_{\Omega'}\frac{|u(x+h)-u(x)|}{|h|^{n+s}}dx + \frac { 2n\omega_n}{s\delta_L^s}\| u\|_{L^1(\Omega)}.
\end{split}
\end{equation}
 Putting together \eqref{1} and \eqref{2} we obtain
 \[
 n\omega_n L \ge \limsup_{s\uparrow 1} (1-s)  \int_{\Omega'}\int_{\Omega'}\frac{|u(x)-u(y)|}{|x-y|^{n+s}}dxdy,
 \]
 and for $ L\to \limsup\limits_{|h|\to 0} g(h) $ the first inequality in \eqref{th}. The second one is well-known.
\end{proof}

\subsection{Minimality of $H$}

\begin{prop}\label{Hmin} For every $s\in(0,1)$, $H$ is the unique
minimizer of $\M{J}_s(\cdot, Q)$, in the sense
that $\M{J}_s(H,Q)\le\M{J}_s(F,Q)$ for every set $F\subset\R{n}$
with $F\cap Q^c=H\cap Q^c$, with strict inequality if
$F\neq H$.
\end{prop}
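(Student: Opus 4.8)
The plan is to show the stronger fact that $H$ is the \emph{unique} minimizer of $\M{J}_s(\cdot,Q)$ in the admissible class $\M{K}:=\{F\subset\R{n}:\ F\cap Q^c=H\cap Q^c\}$; the strict inequality for $F\neq H$ is then immediate. First I would check that $\M{J}_s(\cdot,Q)$ attains its infimum on $\M{K}$: since $\M{J}_s(H,Q)<\infty$, a minimizing sequence $(F_k)\subset\M{K}$ satisfies $\sup_k\M{J}^1_{s}(F_k,Q)<\infty$, so, arguing as in the proof of Theorem~\ref{trm1} (via Proposition~\ref{pmazya} and the Frechet--Kolmogorov criterion) and using that all the $F_k$ coincide off $Q$, a (not relabelled) subsequence converges in $L^1_{\loc}(\R{n})$ to some $\chi_E$ with $E\in\M{K}$; by Fatou, $\M{J}_s(E,Q)\le\liminf_k\M{J}_s(F_k,Q)$, so $E$ is a minimizer. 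It then remains to prove that any minimizer $E\in\M{K}$ equals $H$ a.e.

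For this last step — which is the part borrowed from \cite{CRS} — I would argue by sliding planes. A minimizer $E$ satisfies the Euler--Lagrange condition: comparing $E$ with $E\cup B$ and with $E\setminus B$ for small balls $B$ touching $\de E$ inside $Q$ shows that the nonlocal mean curvature $\mathbf H_s[E](x):=\mathrm{p.v.}\int_{\R{n}}\frac{\chi_{E^c}(y)-\chi_E(y)}{|x-y|^{n+s}}\,dy$ vanishes along $\de E\cap Q$ in the viscosity sense. The geometric input is that every hyperplane $\{x_n=t\}$ has $\mathbf H_s\equiv 0$ (the kernel is odd under reflection across the plane) and that the halfspaces $H_t:=\{x_n<t\}$ foliate $\R{n}$. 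Since $H_{-1/2}\subset H\setminus Q\subset E$ while $H_t\not\subset E$ for every $t>0$ (as $H_t\subseteq E$ would force $\{x_n<t\}\cap Q^c\subseteq H\cap Q^c$, impossible for $t>0$), the number $\tau:=\sup\{t:\ H_t\subseteq E\text{ a.e.}\}$ lies in $[-\tfrac12,0]$ and $\de H_\tau$ touches $\de E$ from inside. Comparing $\mathbf H_s$ at a touching point, using the Euler--Lagrange condition for $E$ together with $\mathbf H_s[H_\tau]\equiv 0$, and then invoking a strong maximum principle and a continuation argument, forces $E=H_\tau$; since $E\in\M{K}$ this is compatible only with $\tau=0$, i.e. $E=H$.

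The hard part is precisely this comparison step: attaching a rigorous viscosity meaning to the Euler--Lagrange condition, proving the strong maximum principle for $\mathbf H_s$, and controlling the measure-theoretic versus topological boundary of $E$ at the touching point — this is the only input genuinely taken from \cite{CRS}. Everything else is elementary; indeed the part of the minimality coming from $\M{J}^2_s$ can be seen directly. Using $\M{J}_s(E,\Omega)=\tfrac12\iint_{D_\Omega}|\chi_E(x)-\chi_E(y)|\,|x-y|^{-n-s}\,dx\,dy$ with $D_\Omega:=(\Omega\times\R{n})\cup(\R{n}\times\Omega)$, and writing $\sigma(x',x_n):=(x',-x_n)$, $\phi(x):=\int_{H^c\cap Q^c}|x-y|^{-n-s}\,dy$, $\psi(x):=\int_{H\cap Q^c}|x-y|^{-n-s}\,dy$, one gets for $F\in\M{K}$
$$\M{J}^2_s(F,Q)-\M{J}^2_s(H,Q)=\int_{(F\triangle H)\cap Q}|\phi(x)-\psi(x)|\,dx,\qquad |\phi(x)-\psi(x)|=\Big|\int_{H\cap Q^c}\!\!\big(|x-y|^{-n-s}-|x-\sigma y|^{-n-s}\big)\,dy\Big|,$$
which is $\ge0$ and strictly positive for a.e. $x\notin\{x_n=0\}$, since $|x-y|\le|x-\sigma y|$ whenever $x,y$ lie on the same side of $\{x_n=0\}$. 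Thus $\M{J}^2_s$ alone already separates $H$ strictly, and the whole difficulty lies in the local minimality of halfspaces for the fractional perimeter $\M{J}^1_s$.
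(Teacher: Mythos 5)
Your proposal is correct in outline, but it takes a genuinely different and technically much heavier route than the paper. The existence step (minimizing sequence, coercivity from Proposition~\ref{pmazya}, Fatou) matches the paper's Proposition~\ref{exist} exactly. The divergence is in the uniqueness step: you propose sliding the foliation $H_t=\{x_n<t\}$ and comparing the nonlocal mean curvature at a touching point, invoking a viscosity Euler--Lagrange equation and a strong maximum principle. As you yourself note, making this rigorous for an \emph{arbitrary measurable} minimizer $E$ is the hard part: one needs to know that $\de E$ is regular enough near the (measure-theoretically defined) contact set for the pointwise comparison of $\mathbf H_s$ to make sense, and this regularity is itself a deep result. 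The paper avoids this entirely by using the reflection $T(x',x_n)=(x',-x_n)$ and a purely measure-theoretic symmetrization of the energy $L(\cdot,\cdot)$ (Propositions~\ref{comp} and~\ref{comp2}), which requires no boundary regularity, no viscosity framework, and no maximum principle: one compares $L(A,E)$ with $L(A,E^c\setminus A)$ for $A:=(H\cap E^c)\cup(T(H\cap E^c)\cap E^c)$ and exploits the elementary inequality $|x-y|\le|T(x)-y|$ for $x,y$ on the same side of $\de H$. This is a much shorter path to the same conclusion, and it is in fact the argument the paper credits to \cite{CRS}.

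Two further comments. First, your identity
$$\M{J}^2_s(F,Q)-\M{J}^2_s(H,Q)=\int_{(F\triangle H)\cap Q}|\phi-\psi|\,dx$$
with $\phi,\psi$ as you define them is correct (it follows from $F^c\setminus H^c=H\setminus F$ and the sign of $\phi-\psi$ on each side of $\de H$, obtained by the same reflection trick). However, it does \emph{not} reduce the problem to local minimality of halfspaces for $\M{J}^1_s$ alone: for a minimizer $F$ of the full $\M{J}_s$ one could a priori have $\M{J}^1_s(F,Q)<\M{J}^1_s(H,Q)$ compensating the increase in $\M{J}^2_s$, so you still need to control the full functional. Indeed the paper's reflection argument is applied to the combined quantities $L(A,E)$, not to $\M{J}^1_s$ and $\M{J}^2_s$ separately. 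Second, a minor but worth-naming subtlety in your sliding argument: the supremum $\tau=\sup\{t:H_t\subseteq E\ \text{a.e.}\}$ need not produce a touching point inside $Q$; the contact could a priori occur only on $\de Q$, where the boundary condition is active, and then the interior Euler--Lagrange condition gives nothing. The reflection proof again sidesteps this by not localizing at a point.
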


The proof of Proposition \ref{Hmin} easily follows from a couple of results of \cite{CRS}, which we give here for the sake of completeness.

\begin{prop}[Existence of minimizers]\label{exist} Given $E_0\subset\Omega^c$ and $s\in(0,1)$ there exists $E\subset\R{n}$ such that $E\cap \Omega^c=E_0$ and
\begin{equation}\label{minimo}
\inf_{F\cap \Omega^c=E_0}\M{J}_s(F,\Omega)=\M{J}_s(E,\Omega).
\end{equation}
\end{prop}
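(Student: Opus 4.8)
The plan is to establish existence of a minimizer for the functional $\M{J}_s(\cdot,\Omega)$ under the prescribed exterior datum $E_0\subset\Omega^c$ by the direct method of the calculus of variations, working at the level of characteristic functions. First I would fix a minimizing sequence $(F_k)$ among competitors with $F_k\cap\Omega^c=E_0$, so that $\M{J}_s(F_k,\Omega)\to m:=\inf\M{J}_s(F,\Omega)$. We may assume $m<\infty$, since otherwise there is nothing to prove (any admissible $F$ is then a minimizer); note that admissible sets with finite energy exist, e.g. one may check $\M{J}_s(E_0\cup(\text{something}),\Omega)$ — more simply, if $\M{J}_s(F,\Omega)=\infty$ for all admissible $F$ then the infimum is $+\infty$ and is attained. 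The key compactness input is that, along the minimizing sequence, the quantity $\M{J}^1_s(F_k,\Omega)=\frac12|\chi_{F_k}|_{W^{s,1}(\Omega)}$ is bounded; since $s\in(0,1)$ is fixed here (no $(1-s)$ scaling is needed), the fractional Sobolev embedding $W^{s,1}(\Omega)\hookrightarrow\hookrightarrow L^1(\Omega)$ for a bounded Lipschitz domain $\Omega$ gives that $(\chi_{F_k})$ is precompact in $L^1(\Omega)$.

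Next I would extract a subsequence (not relabelled) with $\chi_{F_k}\to\chi_E$ in $L^1(\Omega)$ and a.e.\ in $\Omega$, where $E\subset\R{n}$ is the candidate minimizer; since all $F_k$ agree with $E_0$ on $\Omega^c$, we set $E\cap\Omega^c:=E_0$, so $E$ is admissible. The remaining step is lower semicontinuity: we must show $\M{J}_s(E,\Omega)\le\liminf_k\M{J}_s(F_k,\Omega)=m$. For the interior term $\M{J}^1_s$ this is immediate from Fatou's lemma applied to the a.e.\ convergence of $\chi_{F_k}(x)\chi_{F_k^c}(y)\to\chi_E(x)\chi_{E^c}(y)$ on $(E\cap\Omega)\times(E^c\cap\Omega)$ — more precisely one writes $2\M{J}^1_s(F,\Omega)=\int_\Omega\int_\Omega|\chi_F(x)-\chi_F(y)||x-y|^{-n-s}\,dxdy$ and applies Fatou, using that $\chi_{F_k}\to\chi_E$ a.e.\ in $\Omega$. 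For the boundary term $\M{J}^2_s$, note that on $\Omega^c$ all functions coincide with $\chi_{E_0}$, so the integrands converge a.e.\ on the relevant product sets $(\Omega)\times(\Omega^c)$, and Fatou again yields $\M{J}^2_s(E,\Omega)\le\liminf_k\M{J}^2_s(F_k,\Omega)$. Adding the two gives $\M{J}_s(E,\Omega)\le m$, and since $E$ is admissible, equality holds and $E$ realizes the infimum.

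The main obstacle, and the only point requiring a little care, is the compactness of the minimizing sequence: one needs a bound on $\chi_{F_k}$ in a space that embeds compactly into $L^1(\Omega)$, and a priori we only control $\M{J}^1_s(F_k,\Omega)+\M{J}^2_s(F_k,\Omega)$, the first summand being exactly (half of) the Gagliardo $W^{s,1}(\Omega)$ seminorm while the $L^1(\Omega)$ norm of $\chi_{F_k}$ is trivially bounded by $|\Omega|$. Thus $(\chi_{F_k})$ is bounded in $W^{s,1}(\Omega)$ for the fixed $s$, and the compact embedding $W^{s,1}(\Omega)\hookrightarrow\hookrightarrow L^1(\Omega)$ — valid for bounded Lipschitz $\Omega$, or in the one-dimensional case $\Omega=(a,b)$ directly — supplies the $L^1$-convergent subsequence. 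Alternatively, and consistently with the rest of the paper, one can invoke Proposition~\ref{pmazya}: the uniform bound on $\M{J}^1_s(F_k,\Omega)$ gives, via \eqref{upper}, a uniform modulus of continuity $\|\tau_h\chi_{F_k}-\chi_{F_k}\|_{L^1(\Omega')}\le C(n)|h|^s(1-s)\,2\M{J}^1_s(F_k,\Omega)$ on every $\Omega'\Subset\Omega$, whence the Fréchet--Kolmogorov criterion yields precompactness in $L^1_{\loc}(\Omega)$; combined with the fixed exterior datum on $\Omega^c$ this is enough to pass to the limit. Once compactness is secured, the lower semicontinuity via Fatou is routine, and no further subtlety arises.
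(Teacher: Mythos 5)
Your argument is correct and is essentially the same as the paper's: the paper's one-line proof invokes exactly lower semicontinuity via Fatou's lemma together with the coercivity/compactness coming from Proposition~\ref{pmazya}, which is the second of the two compactness routes you describe (the compact embedding $W^{s,1}(\Omega)\hookrightarrow\hookrightarrow L^1(\Omega)$ being an equivalent formulation). You have simply written out in full the details the paper leaves implicit.
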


\begin{proof}
This follows immediately from the lower semicontinuity of $\M{J}_s$ with respect to the $L^1_{\loc}$ convergence (a simple consequence of Fatou's lemma) and the coercivity estimate of Proposition \ref{pmazya}.
\end{proof}

In general a set $E$ satisfying \eqref{minimo} will be called a
\emph{minimizer} of $\M{J}_s(\cdot,\Omega)$. Following the notation
of \cite{CRS}, we set $L(A,B):=\int_A\int_B |x-y|^{-n-s}dxdy$ for
$s\in(0,1)$ and $A,\,B\subset\R{n}$ measurable. Notice that $L(A\cup
B,C)=L(A,C)+L(B,C)$ if $|A\cap B|=0$ and $L(A,B)=L(B,A)$. Now we can
write
$$\M{J}_s(E,\Omega)=L(E\cap \Omega, E^c)+L(E\cap \Omega^c, E^c\cap \Omega).$$
It is easy to check that a minimizer $E$ of
$\M{J}_s(\cdot,\Omega)$ satisfies
\begin{eqnarray}
L(A,E)\leq L(A,E^c\setminus A)&&\qquad \text{for }A\subset E^c\cap \Omega \label{sub}\\
L(A,E^c)\leq L(A,E\setminus A)&& \qquad \text{for }A\subset E\cap
\Omega \label{sup}.
\end{eqnarray}
It suffices indeed to compare $E$ with $E\setminus A$ and with $E\cup A$.

\begin{prop}[Comparison principle I]\label{comp} Let $E$ satisfy \eqref{sub} with $\Omega=Q$
and assume that $H\cap Q^c\subset E$. Then $H\subset E$ up to a set of measure zero (i.e. $|H\cap E^c|=0$).
\end{prop}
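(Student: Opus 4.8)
The plan is a sliding / moving–planes argument, exploiting that the obstacle $H$ is a halfspace.

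I would slide the family of parallel halfspaces $P_t:=\{x_n\le t\}$, $t\le 0$. Since $P_t\subset H\cap Q^c\subset E$ for every $t\le-\tfrac12$, the level
$$t^*:=\sup\{t\le 0:\ P_t\subset E\ \text{up to a null set}\}$$
is well defined, lies in $[-\tfrac12,0]$, and $P_{t^*}\subset E$ up to a null set by monotone convergence. If $t^*=0$, letting $t\uparrow 0$ gives $H\subset E$ up to a null set and we are done; so the whole point is to exclude $t^*<0$.

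Assume $t^*<0$ and, for $0<\delta\le|t^*|$, consider the slab hole $A_\delta:=E^c\cap\{t^*<x_n\le t^*+\delta\}$. By maximality of $t^*$ one has $|A_\delta|>0$; since $P_{t^*}\subset E$ up to a null set we get $E^c\subset\{x_n>t^*\}$ up to a null set, hence $E^c\setminus A_\delta\subset\{x_n>t^*+\delta\}$ up to a null set; and since $E\supset H\cap Q^c\supset\{x_n\le t^*+\delta\}\cap Q^c$, every point of $A_\delta$ lies in $H\cap Q$, so $A_\delta\subset Q$ is admissible in \eqref{sub}. Feeding $A=A_\delta$ into \eqref{sub} gives $L(A_\delta,E)\le L(A_\delta,E^c\setminus A_\delta)$. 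On the left I keep $L(A_\delta,E)\ge L(A_\delta,P_{t^*})+L(A_\delta,E\cap\{t^*<x_n\le t^*+\delta\})$, using $P_{t^*}\subset E$. On the right I use the reflection $R$ across the hyperplane $\{x_n=t^*+\delta\}$: since each $y\in E^c\setminus A_\delta$ lies strictly above that hyperplane and each point of $A_\delta$ (weakly) below it, $|x-R(y)|\le|x-y|$, so $L(A_\delta,E^c\setminus A_\delta)\le L(A_\delta,R(E^c\setminus A_\delta))$, and $R(E^c\setminus A_\delta)\subset\{x_n<t^*+\delta\}$ with its portion in $\{x_n\le t^*\}$ contained in $P_{t^*}\subset E$.

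Combining these and cancelling the common term $L(A_\delta,P_{t^*})$, inequality \eqref{sub} collapses to an inequality between two \emph{within-slab} interaction terms supported in $\{t^*<x_n\le t^*+\delta\}$. The step I expect to be the main obstacle is to turn this into a contradiction: one must extract, at the critical level, a strictly positive gain — coming from the strict contraction $|x-R(y)|<|x-y|$ of the reflection away from the boundary hyperplane — that survives and dominates the uncontrolled remaining slab term as $\delta\downarrow0$. I would attempt this by localising $A_\delta$ around a Lebesgue density point of the hole lying in the open halfspace, so that the relevant interactions become those of a set of positive measure with a genuinely larger reflected copy of itself (the reflection of $A_\delta$ across $\{x_n=t^*\}$, which sits inside $P_{t^*}\subset E$), forcing $|A_\delta|=0$ and contradicting the maximality of $t^*$. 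An alternative, closer to \cite{CRS}, is to apply \eqref{sub} once to the full hole $A:=E^c\cap H\subset Q$ and reflect $E^c\cap H^*$ across $\partial H$, using $E\supset H\cap Q^c$ to push the exterior part of $E^c\cap H^*$ into $E$; the delicate point there is the same, namely controlling the self-interaction $L(A,A)$ (which may a priori be infinite), and this is again why localising the hole to a small ball is convenient. Either way the conclusion is $|H\cap E^c|=0$.
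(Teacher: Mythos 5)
Your proposal is a plan, not a proof: you explicitly flag the decisive step (extracting a strictly positive gain at the critical level $t^*$ that dominates the uncontrolled slab term as $\delta\downarrow 0$) as ``the main obstacle,'' and you do not close it. The sliding-plane route is genuinely harder than it looks here, because maximality of $t^*$ only gives $|A_\delta|>0$ for every $\delta>0$, while $|A_\delta|\to 0$ as $\delta\downarrow 0$; so the ``gain'' you want must be shown to beat a term of the same vanishing order, and the density-point localisation you gesture at does not by itself produce such a quantitative comparison. Your alternative sketch is closer to what the paper actually does, but the worry you raise there about $L(A,A)$ being infinite is a red herring: the paper never needs to control $L(A,A)$, because it compares $L(A,E)$ with $L(A,E^c\setminus A)$ and then applies the reflection $T$ across $\partial H$ \emph{only on the right-hand side}, using the isometry invariance $L(B,C)=L(T(B),T(C))$.

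Concretely, the paper sets $A^-:=H\cap E^c$, $A^+:=T(A^-)\cap E^c$, $A:=A^-\cup A^+\subset E^c\cap Q$, and $F:=T(E^c\setminus A)\subset H$. The key algebraic identity is to split $A$ into $A_1:=A^+\cup T(A^+)$ (which is $T$-invariant) and $A_2:=A^-\setminus T(A^+)$, so that $T(A)=A_1\cup T(A_2)$ and
\begin{equation*}
0\ \ge\ L(A,E)-L(A,E^c\setminus A)\ =\ L(A_1,E\setminus F)+L(A_2,E\setminus F)+\bigl(L(A_2,F)-L(T(A_2),F)\bigr).
\end{equation*}
All three summands are nonnegative, and the last is strictly positive unless $|A_2|=0$ (for $y\in F\subset H$ and $x\in A_2\setminus\partial H$ one has $|x-y|<|T(x)-y|$). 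One then concludes $|A_2|=0$ and treats the degenerate case $|E\setminus F|=0$ by translating $E$ by $(0,\dots,0,\varepsilon)$ and repeating. The crucial idea your write-up is missing is precisely this decomposition of $A$ into a $T$-symmetric part $A_1$ and a part $A_2$ supported strictly in the open halfspace, so that the reflection produces an \emph{unconditional} sign on each term rather than a delta-dependent comparison. Without an analogue of that, neither of your two sketches yields the contradiction.
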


\begin{proof} Let $T(x',x_n):=(x',-x_n)$ denote the reflection across $\de H$ and set
$A^-:= H\cap E^c$, $A^+:= T(A^-)\cap E^c$, $A:=A^-\cup A^+\subset
E^c\cap Q$, $A_1:= A^+\cup T(A^+)$, $A_2=A^-\setminus T(A^+)$ and
$F:= T(E^c\setminus A)\subset H$. Then, observing that
$L(B,C)=L(T(B),T(C))$, from \eqref{sub} we infer
\begin{equation*}
\begin{split}
0&\ge L(A,E)-L(A,E^c\setminus A)=L(A,E)-L(T(A),F)= L(A,E)-L(A_1,F)-L(T(A_2),F)\\
&=L(A,E)-L(A,F)+L(A_2,F)-L(T(A_2),F)=L(A,E\setminus F)+L(A_2,F)-L(T(A_2),F)\\
&=L(A_1,E\setminus F)+L(A_2, E\setminus F)+(L(A_2,F)-L(T(A_2),F)).
\end{split}
\end{equation*}
The first two terms on the right-hand side are clearly positive. We
also have $L(A_2,F)>L(T(A_2),F)$ unless $|A_2|=0$, since for $y\in
F$ and $x\in A_2\setminus \de H$ one has $|x-y|<|T(x)-y|$. Therefore
the right-hand side must be zero, $|A_2|=0$ and either $|A_1|=0$
(and the proof is complete), or $|E\setminus F|=0$. In the latter
case consider for a small $\ve>0$ the translated set
$E_\ve:=E+(0,\ldots,0,\ve)$, which satisfies \eqref{sub} in
$Q_\ve:=Q+(0,\ldots,0,\ve)$, hence also in $\tilde Q_\ve:=Q_\ve\cap
T(Q_\ve)$. Repeating the above procedure for $E_\ve$ in $\tilde
Q_\ve$ we get $|A_{2,\ve}|=0$ ($A^-_\ve$, $A^+_\ve$, etc. are
defined as above with respect to the set $E_\ve$
 in the domain $\tilde Q_\ve$, still reflecting
across $\partial H$; we use also the fact since $H\subset
H_\ve:=H+(0,\ldots,0,\ve)$, we have $H\cap \tilde Q_\ve^c\subset
E_\ve$) and, since $|E_\ve\setminus F_\ve|=\infty$, $|A_{1,\ve}|=0$.
This implies at once that $|A^-_\ve|=0$ and $|H\setminus E_\ve|=0$.
Since this is true for every small $\ve>0$, it follows that
$H\subset E$ (up to a set of measure $0$).
\end{proof}

By a similar argument, the proposition above also
holds replacing $H$ by $H^c$. Also, it is easy to see that if $E$
satisfies \eqref{sup}, then $E^c$ satisfies \eqref{sub}, hence
by applying Proposition~\ref{comp} to $E^c$ and $H^c$
one has the following corollary.

\begin{prop}[Comparison principle II]\label{comp2}
Let $E$ satisfy \eqref{sup} with $\Omega=Q$ and assume that $E\cap
Q^c\subset H$. Then $E\subset H$ up to a set of measure zero (i.e.
$|H^c\cap E|=0$).
\end{prop}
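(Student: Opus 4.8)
The plan is to obtain the claim from Proposition~\ref{comp} --- in the $H^c$-version pointed out in the remark preceding the statement --- by passing to complements, as anticipated in that remark.

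First I would note that ``$E$ satisfies \eqref{sup}'' is, word for word, the statement that $E^c$ satisfies \eqref{sub}: substituting $E^c$ for $E$ in \eqref{sub} turns ``$L(A,E)\le L(A,E^c\setminus A)$ for $A\subset E^c\cap Q$'' into ``$L(A,E^c)\le L(A,E\setminus A)$ for $A\subset E\cap Q$'', which is exactly \eqref{sup}. No computation is involved; complementation merely interchanges the set with its complement, and the two one-sided inequalities coming from the comparisons with $E\cup A$ and $E\setminus A$. Hence $E^c$ satisfies \eqref{sub} with $\Omega=Q$.

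Next I would rewrite the hypothesis $E\cap Q^c\subset H$ in the equivalent form $H^c\cap Q^c\subset E^c$ (a point of $Q^c$ not in $E$ is not in $H$), which is precisely the containment required, for the set $E^c$, by the $H^c$-variant of Proposition~\ref{comp}. The only point needing (minor) care is to record that this $H^c$-variant is genuinely valid: its proof is verbatim that of Proposition~\ref{comp}, reflecting across $\partial H$ and using that for $x\in H^c\setminus\partial H$ and $y$ in the reflected set (which now lies in $H$) one has $|x-y|<|T(x)-y|$, together with the same translation argument to exclude the degenerate alternative. Granting this, Proposition~\ref{comp} applied to $E^c$ gives $H^c\subset E^c$ up to a set of measure zero, i.e. $|H^c\cap E|=0$, which is the asserted inclusion $E\subset H$ modulo a null set. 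Thus the whole argument is a short complementation on top of the already established Proposition~\ref{comp}, and I expect no real obstacle beyond bookkeeping correctly the $H\leftrightarrow H^c$ and $E\leftrightarrow E^c$ swaps.
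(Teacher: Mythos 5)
Your proposal is correct and coincides with the paper's own argument: the paper likewise reduces Proposition~\ref{comp2} to the $H^c$-variant of Proposition~\ref{comp} by observing that $E$ satisfying \eqref{sup} is the same as $E^c$ satisfying \eqref{sub}, and that $E\cap Q^c\subset H$ is equivalent to $H^c\cap Q^c\subset E^c$. Your side remark about why the $H^c$-variant holds (reflection across $\partial H$ with the strict inequality $|x-y|<|T(x)-y|$ now used on the other side, plus the same translation trick) matches the paper's "by a similar argument."
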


\medskip

\noindent\emph{Proof of Proposition \ref{Hmin}.} According to
Proposition~\ref{exist} a minimizer $E$ of $\M{J}_s(\cdot,Q)$ with
$E\cap Q^c=H\cap Q^c$ exists. Then $E$ satisfies both \eqref{sub}
and \eqref{sup}, hence by Propositions \ref{comp} and \ref{comp2} we
have $H\subset E$ and $E\subset H$ (up to sets of measure $0$), i.e.
$E=H$. \hfill $\square$

\subsection{Monotone set functions}\label{setfunctions}

We report some of the main results of \cite{DGL}, see also
\cite[Chapter~16]{DM} for more general and related results.
In the sequel for an open set $ \Omega\subset\R{n} $, we denote by $ \mathcal P (\Omega) $ the
set of subsets of $ \Omega $ and by $ \M{A}(\Omega),\
\M{K}(\Omega)\subset  \M{P} (\Omega) $, the collection of open and
compact subset of $ \Omega $ respectively. We also define
\[
\M{C}(\Omega):=\Big\{ \bigcup_{i=1}^{M} Q_i:\ Q_i\in \M{Q},\ M \in \mathbb{N}\Big\},
\]
where $ \M{Q} $ is \emph{countable} the set of \emph{open} cubes $ Q_r(x):= x+rQ \Subset \Omega $ with $x\in \mathbb Q^n  $ and $0< r\in \mathbb Q $. The collections $\M{A}(\Omega)$, $\M{K}(\Omega)$ and $\M{C}(\Omega)$ satisfy the following property
\begin{equation}\label{inclusioni}
A\in \M{A}(\Omega),\ K\in\M{K}(\Omega),\ K\subset A\ \Rightarrow \ \text{there exists } C\in \M{C}(\Omega) \text{ with } K\subset C\Subset A.
\end{equation}
We say that a set function $ \alpha : \M{P}(\Omega) \to [0,\infty] $ is   \emph{monotone} if
\[ \alpha(E)\le \alpha (F)  \text{ wherever $ E \subset F $},
\]
and that a monotone set function is \emph{regular} if the following two conditions hold
\begin{eqnarray}
\alpha(A)&=&\sup\{\alpha(K):\ K\subset A,\ K \in \M{K}(\Omega)\}\text{ for any $ A \in \M{A}(\Omega), $}\label{reg-}\\
\alpha(E)&=&\inf\{\alpha(A):\ E\subset A,\ A\in \M{A}(\Omega)\}\text{ for any $ E \in  \M{P}(\Omega). $ }\label{reg+}
\end{eqnarray}
Thanks to \eqref{inclusioni} it is clear that \eqref{reg-} is equivalent to
\begin{equation}\label{equiv}
\alpha(A) =\sup\{\alpha(V):\ V\Subset A,\ V \in \M{A}(\Omega)\}
=\sup\{\alpha(C):\ C\Subset A,\ C \in \M{C}(\Omega)\}.
\end{equation}
We also say that a monotone set function $\alpha$ is \emph{super-additive}  if
\[
\alpha(E\cup F)\ge \alpha (E)+\alpha(F),\quad \text{wherever } E,F\in \M{P}(\Omega),\ E\cap F=\emptyset.
\]

We say that a sequence  of regular monotone set functions $\alpha_i$ \emph{weakly converges} to a monotone set function $ \alpha $ if the following two conditions hold:
\begin{eqnarray}
\liminf_{i \to \infty} \alpha_i(A)&\ge& \alpha(A) \text{ for every $ A\in \M{A}(\Omega), $}\label{lim-}\\
\limsup_{i \to \infty} \alpha_i(K)&\le& \alpha(K) \text{ for every $ K\in \M{K}(\Omega) $}.\label{lim+}
\end{eqnarray}
The limit need not be unique, but it is easy to see that a sequence of regular monotone set functions admits at most one \emph{regular} limit.

\begin{trm}[De Giorgi-Letta]\label{DGL} Let $ (\alpha_i) $ be a sequence of regular monotone set functions such that
\[
\limsup_{i\to\infty} \alpha_i(\Omega')<\infty\quad \text{for every open set }\Omega'\Subset\Omega.
\]
Then there exists a subsequence $ (\alpha_{i'}) $  weakly converging to a regular monotone set function $ \alpha $. Moreover if each $ \alpha_i $ is super-additive on disjoint open sets\footnote{This means that $ \alpha_i(A\cup B) \ge \alpha_i(A)+\alpha_i(B) $ wherever $ A,B \in \M{A}(\Omega) $ are disjoint.} (and hence on disjoint compact sets), then so is $ \alpha $.
\end{trm}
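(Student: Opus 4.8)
The plan is to run the standard diagonalization argument for monotone set functions over the countable family $\M{C}(\Omega)$. Enumerate $\M{C}(\Omega)=\{C_1,C_2,\dots\}$. Each $C_k$ satisfies $\overline{C_k}\Subset\Omega$, so $\overline{C_k}\subset\Omega'$ for some open $\Omega'\Subset\Omega$ and hence $\limsup_i\alpha_i(C_k)\le\limsup_i\alpha_i(\Omega')<\infty$; a diagonal extraction then produces a subsequence $(\alpha_{i'})$ along which $\beta(C):=\lim_{i'}\alpha_{i'}(C)\in[0,\infty)$ exists for every $C\in\M{C}(\Omega)$. I would then take as candidate limit the set function obtained by \emph{inner} regularization on open sets and \emph{outer} regularization elsewhere:
$$\alpha(A):=\sup\{\beta(C):\ C\in\M{C}(\Omega),\ C\Subset A\}\quad(A\in\M{A}(\Omega)),\qquad \alpha(E):=\inf\{\alpha(A):\ E\subset A,\ A\in\M{A}(\Omega)\}\quad(E\in\M{P}(\Omega)).$$

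Next I would establish the structural properties of $\alpha$. Monotonicity on open sets is immediate, since a cube complex $\Subset A_1$ is also $\Subset A_2$ when $A_1\subset A_2$; monotonicity on all of $\M{P}(\Omega)$ follows, and in particular the two displayed formulas agree on open sets, so $\alpha$ is well defined, and it is finite on sets $\Subset\Omega$ by the uniform bound above. Property \eqref{reg+} holds by construction. For \eqref{reg-} (equivalently \eqref{equiv}) one uses the separation property \eqref{inclusioni}: on the one hand any compact $K\subset A$ lies in some $C\in\M{C}(\Omega)$ with $K\subset C\Subset A$, so $\alpha(K)\le\alpha(C)\le\alpha(A)$; on the other hand, for $C\in\M{C}(\Omega)$ with $C\Subset A$ one has $\beta(C)\le\alpha(A')$ for every open $A'\supset\overline C$ (directly from the definition of $\alpha$ on $A'$), hence $\beta(C)\le\alpha(\overline C)\le\sup\{\alpha(K):K\subset A\ \text{compact}\}$. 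Taking suprema gives \eqref{reg-}, so $\alpha$ is a regular monotone set function.

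The weak convergence $\alpha_{i'}\to\alpha$ is then a sandwiching argument. For \eqref{lim-}: given open $A$ and $C\in\M{C}(\Omega)$ with $C\Subset A$, monotonicity gives $\alpha_{i'}(A)\ge\alpha_{i'}(C)\to\beta(C)$, so $\liminf_{i'}\alpha_{i'}(A)\ge\beta(C)$, and the sup over such $C$ yields $\liminf_{i'}\alpha_{i'}(A)\ge\alpha(A)$. For \eqref{lim+}: given compact $K$ and any open $A\supset K$, pick $C\in\M{C}(\Omega)$ with $K\subset C\Subset A$ via \eqref{inclusioni}; then $\alpha_{i'}(K)\le\alpha_{i'}(C)\to\beta(C)\le\alpha(A)$, so $\limsup_{i'}\alpha_{i'}(K)\le\alpha(A)$, and infimizing over open $A\supset K$ gives $\limsup_{i'}\alpha_{i'}(K)\le\alpha(K)$ by the definition of $\alpha$ on $\M{P}(\Omega)$.

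Finally, for the super-additivity statement, assume each $\alpha_i$ is super-additive on disjoint open sets. Given disjoint open $A_1,A_2\subset\Omega$ and $C_j\in\M{C}(\Omega)$ with $C_j\Subset A_j$, the $C_j$ are disjoint, $C_1\cup C_2\in\M{C}(\Omega)$ and $C_1\cup C_2\Subset A_1\cup A_2$, hence
$$\alpha(A_1\cup A_2)\ge\beta(C_1\cup C_2)=\lim_{i'}\alpha_{i'}(C_1\cup C_2)\ge\lim_{i'}\bigl(\alpha_{i'}(C_1)+\alpha_{i'}(C_2)\bigr)=\beta(C_1)+\beta(C_2),$$
and taking suprema over $C_1$ and $C_2$ separately gives $\alpha(A_1\cup A_2)\ge\alpha(A_1)+\alpha(A_2)$. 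Super-additivity on disjoint compact sets $K_1,K_2$ then follows: separate them by disjoint open $A_1\supset K_1$, $A_2\supset K_2$, and for any open $A\supset K_1\cup K_2$ apply the open-set case to the disjoint open sets $A\cap A_1$, $A\cap A_2$ together with monotonicity and \eqref{reg+}. I expect no single step to be hard; the only real care is organizational — keeping \eqref{inclusioni}/\eqref{equiv} in play at the right spots and checking that the inner and outer regularizations are mutually compatible, so that $\alpha$ is genuinely \emph{one} regular monotone set function (and regular weak limits, when they exist, are unique).
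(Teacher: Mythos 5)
Your proof is correct and follows essentially the same route as the paper's: diagonal extraction over the countable family $\M{C}(\Omega)$, the same inner/outer regularization defining $\alpha$, and the same sandwiching arguments for weak convergence and super-additivity. The only (minor) variation is that you establish \eqref{reg-} directly from the definition of $\alpha$ as an outer regularization of the $\beta$-supremum, whereas the paper routes the corresponding inequality through the already-established \eqref{lim+}; both are valid and equally short.
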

\begin{proof}
Since the proof is standard we only sketch it.

\noindent \emph{Step 1.} Being $\M{C}(\Omega)$ countable, we can easily extract a diagonal subsequence, still denoted by $(\alpha_i) $ such that,
\[
\beta(C) :=  \lim_{i\to\infty} \alpha_{i}(C)<\infty \quad \text{ for any $ C \in \M{C}(\Omega) .$}
\]

\noindent\emph{Step 2.} We define
\[
\begin{split}
\alpha(A)&:=\sup\big\{\beta(C):\ C \Subset A,\ C\in \M{C}(\Omega)\big\} \quad \text{for every }A \in \M{A}(\Omega),\\
\alpha(E)&:=\inf\big\{\alpha(A):\ A\supset E,\  A \in \M{A}(\Omega)\big\}\quad \text{for every }E \in\M{P}(\Omega).
\end{split}
\]
Clearly for $ C \in \M{C}(\Omega) $ we have $\alpha(C)\le \beta(C)  $.

\noindent\emph{Step 3.} The set function $ \alpha $ is clearly monotone, and if every $\alpha_{i}$ is
super-additive on disjoint open sets, then so is $\alpha$. It is also easy to see that \eqref{lim-} is satisfied.
As for \eqref{lim+}, it is an easy consequence of the identity
\[
\alpha(K)=\inf\{\beta(C):\ C\supset K,\ C\in \M{C}(\Omega)\}.
\]
which follows from \eqref{inclusioni}. Then $\alpha_i$ converges weakly to $\alpha$.

\noindent\emph{Step 4.}
It remains to prove the regularity of $ \alpha $. Identity \eqref{reg+} follows by the definition of $\alpha$.
In order to prove \eqref{reg-} fix any $A\in \M{A}(\Omega)$. Then for $C\in \M{C}(\Omega)$ with $C\Subset A$,
we have
$$\beta(C)=\lim_{i\to\infty}\alpha_i(C)\le \limsup_{i\to\infty}\alpha_i(\overline C)\le
\alpha(\overline C)\le\alpha (C')\le \beta (C').$$
 From this and the definition of $\alpha(A)$,
\eqref{equiv} follows at once, hence also \eqref{reg-}.
\end{proof}

\begin{prop}\label{useful}Let $ (\alpha_i) $ be a sequence of regular monotone set functions weakly converging to a regular monotone set function $ \alpha $, and let $K_j\downarrow K  $ be a decreasing sequence of compact sets such that
$ \alpha(K)=0 $. Then
\[
\lim_{j\to \infty} \limsup_{i\to \infty} \alpha_i(K_j)=0
\]
\end{prop}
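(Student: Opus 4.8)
The plan is to exploit the regularity of the limit set function $\alpha$ together with property \eqref{inclusioni} relating compact sets, open sets, and finite unions of cubes. First I would use the outer regularity \eqref{reg+} of $\alpha$: since $\alpha(K)=0$, for every $\eta>0$ there is an open set $A\in\M{A}(\Omega)$ with $K\subset A$ and $\alpha(A)<\eta$. The key point is then to pass from this $A$ down to one of the $K_j$'s. Because $K_j\downarrow K$ is a decreasing sequence of compact sets with intersection $K$, and $A$ is an open neighbourhood of $K$, a standard compactness argument gives $K_j\subset A$ for all $j$ large enough: otherwise we could pick $x_j\in K_j\setminus A$ for infinitely many $j$, and by compactness of $K_1$ a subsequence would converge to some $x\in\bigcap_j K_j=K$ with $x\notin A$ (since $A^c$ is closed), contradicting $K\subset A$.

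Next I would interpose an element of $\M{C}(\Omega)$ in order to apply the weak convergence hypothesis \eqref{lim+}, which is stated for compact sets. Fix $j_0$ with $K_{j_0}\subset A$. By \eqref{inclusioni} there exists $C\in\M{C}(\Omega)$ with $K_{j_0}\subset C\Subset A$; in particular $\overline C$ is a compact subset of $A$, so by monotonicity $\alpha(\overline C)\le\alpha(A)<\eta$. Now the weak convergence of $\alpha_i$ to $\alpha$, precisely \eqref{lim+} applied to the compact set $\overline C$, yields
\[
\limsup_{i\to\infty}\alpha_i(\overline C)\le\alpha(\overline C)<\eta.
\]
Since $K_j\subset K_{j_0}\subset C\subset\overline C$ for every $j\ge j_0$, monotonicity of each $\alpha_i$ gives $\alpha_i(K_j)\le\alpha_i(\overline C)$, hence $\limsup_{i\to\infty}\alpha_i(K_j)\le\eta$ for all $j\ge j_0$. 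Therefore $\limsup_{j\to\infty}\limsup_{i\to\infty}\alpha_i(K_j)\le\eta$, and letting $\eta\downarrow 0$ completes the proof.

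The only genuinely delicate point is the elementary topological step showing $K_j\subset A$ eventually; everything else is a direct bookkeeping with monotonicity, regularity \eqref{reg+}, the sandwiching property \eqref{inclusioni}, and the defining inequality \eqref{lim+} of weak convergence. One should just be mildly careful that $\M{C}(\Omega)$ consists of \emph{open} finite unions of cubes, so it is the closure $\overline C$ (a compact set) to which \eqref{lim+} is applied, while the $K_j$ sit inside the open set $C$ itself.
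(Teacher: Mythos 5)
Your proof is correct and follows essentially the same line as the paper's: use outer regularity of $\alpha$ at $K$ to find an open $A\supset K$ with $\alpha(A)$ small, a compactness argument to get $K_j\subset A$ for $j$ large, and then the defining inequality \eqref{lim+} of weak convergence together with monotonicity. The only superfluous step is the interposition of $C\in\M{C}(\Omega)$ and its closure $\overline C$: each $K_j$ is itself compact, so you can apply \eqref{lim+} to $K_j$ directly and conclude $\limsup_i\alpha_i(K_j)\le\alpha(K_j)\le\alpha(A)<\eta$ for $j$ large, which is exactly what the paper does.
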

\begin{proof} We have
\[
0=\alpha(K)=\lim_{j\to \infty} \alpha(K_j)\ge \lim_{j\to \infty}\limsup_{i\to \infty} \alpha_i(K_j),
\]
where the second equality follows from the regularity of $ \alpha $. Indeed for $A\in\M{A}(\Omega)$ with $A\supset K$, we have by compactness $A\supset K_j$ for $j$ large enough, hence
$$\alpha(A)\ge \lim_{j\to\infty} \alpha(K_j)\ge \alpha(K)=0,$$
and the claim follows by taking the infimum over all $A\in \M{A}(\Omega)$ with $A\supset K$.
\end{proof}

\end{document}